








\documentclass[11pt]{amsart}
\usepackage{geometry}                
\geometry{letterpaper}                   
\usepackage{graphicx}
\usepackage{amssymb}
\usepackage{epstopdf}

\usepackage{amsmath,mathtools}
\usepackage{enumitem}
\usepackage{hyperref}

\DeclareGraphicsRule{.tif}{png}{.png}{`convert #1 `dirname #1`/`basename #1 .tif`.png}



\newtheorem{theorem}{Theorem}[section]
\newtheorem{proposition}[theorem]{Proposition}
\newtheorem{lemma}[theorem]{Lemma}

\newtheorem{claim}{Claim}[theorem]

\newtheorem{definition}[theorem]{Definition}
\newtheorem{remark}[theorem]{Remark}
\newtheorem{fact}[theorem]{Fact}

\newcommand{\id}{\ensuremath{\text{{\rm id}}}}
\newcommand{\Ord}{\ensuremath{\text{{\rm Ord}}}}
\newcommand{\Card}{\ensuremath{\text{{\rm Card}}}}
\newcommand{\ZFC}{\ensuremath{\text{{\sf ZFC}}}}

\DeclareMathOperator{\dom}{dom}
\DeclareMathOperator{\crit}{crit}
\DeclareMathOperator{\rng}{rng}
\DeclareMathOperator{\otp}{otp}
\DeclareMathOperator{\Lim}{Lim}
\newcommand{\LimNoArg}{\ensuremath{\text{{\rm Lim}}}}

\DeclareMathOperator{\cf}{cf}
\DeclareMathOperator{\cof}{cof}

\DeclareMathOperator{\rank}{rank}
\DeclareMathOperator{\trcl}{trcl}
\newcommand{\FA}{\ensuremath{\text{{\sf FA}}}}
\newcommand{\MA}{\ensuremath{\text{{\sf MA}}}}
\newcommand{\PFA}{\ensuremath{\text{{\sf PFA}}}}
\newcommand{\MM}{\ensuremath{\text{{\sf MM}}}}

\newcommand{\Cl}{\ensuremath{\text{{\rm Cl}}}}

\newcommand{\SCH}{\ensuremath{\text{{\sf SCH}}}}

\newcommand{\medcup}{\ensuremath{{\textstyle\bigcup}}}

\begin{document}

\title{Guessing models and generalized Laver diamond}
\author{Matteo Viale}
\date{}

\begin{abstract}
We analyze the notion of \emph{guessing model}, 
a way to assign combinatorial properties to arbitrary regular cardinals. 
Guessing models can be used, in combination 
with inaccessibility, to characterize various large cardinals axioms,
ranging from supercompactness to rank-to-rank embeddings. 
The majority of these large cardinals properties can be defined in terms of  
suitable elementary embeddings $j\colon V_\gamma \to V_\lambda$.
One key observation is that such embeddings are uniquely determined by the image structures $j [ V_\gamma ]\prec V_\lambda$. 
These structures will be the prototypes guessing models. 
We shall show, using guessing models $M$, how to prove
for the ordinal $\kappa_M=j_M  (\crit(j_M))$ (where $\pi_M$ 
is the transitive collapse of $M$ and $j_M$ is its inverse) 
many of the combinatorial properties that we can 
prove for the cardinal $j(\crit(j))$ using the structure $j[V_\gamma]\prec V_{j(\gamma)}$. 
$\kappa_M$ will always be a regular cardinal, but consistently can be a successor and guessing models $M$ with $\kappa_M=\aleph_2$ exist assuming the proper forcing axiom.
By means of these models we shall introduce a new structural property of models of $\PFA$: the existence of a ``Laver function''
$f \colon \aleph_2 \to H_{\aleph_2}$ sharing the same features of the usual Laver functions $f\colon\kappa\to H_\kappa$ provided by a supercompact cardinal $\kappa$.
Further applications of our analysis will be proofs of the singular cardinal hypothesis and of the failure 
of the square principle assuming the existence of guessing models. In particular the failure of 
square shows that the existence of guessing models is a very strong assumption in terms of large cardinal strength.
\end{abstract}

\maketitle

\section{Introduction}
The notation used is standard and follows~\cite{jech} and~\cite{Kan09}.
The reader should look-up  Section~\ref{subsect.notation} for all undefined notions.
\begin{definition} 
A structure $\mathfrak{R}=\langle R,\in, A\rangle$
is a \emph{suitable initial segment}
if $R = V_ \alpha $ for some ordinal \( \alpha \) or $ R = H_ \theta $ for some regular cardinal \( \theta \) 
and $A\subseteq P_\omega  ( R )$.
\end{definition}

%
For any set  $X$ let
\[
\kappa_X \coloneqq \min\{\alpha\in X:\alpha\text{ is an ordinal and }X\cap\alpha \neq\alpha\},
\]
\( \kappa _X \) being undefined when 
\( X\cap\Ord\) is an ordinal.

\begin{definition}
Let $\mathfrak{R}$ be a suitable initial segment and $M\prec R$.
\begin{enumerate}[label=(\roman*)]
\item
Given a cardinal $\delta \leq \kappa_M$, $X\in M$ and $d\in P(X)\cap R$ we say that:
\begin{itemize}
\item 
$d$ is $(\delta,M)$-\emph{approximated} if $d\cap Z\in M$ for all $Z\in M\cap P_{\delta} ( R )$.
\item 
$d$ is $M$-\emph{guessed} if $d\cap M=e\cap M$ for some $e\in M\cap P(X)$.
\end{itemize}
\item 
$M\prec R$ is a $\delta$\emph{-guessing model for $X$} if every $(\delta,M)$-approximated subset of $X$ is $M$-guessed.
\item 
$M\prec R$ is a $\delta$\emph{-guessing model} if  $M$ is a $\delta$-guessing model for $X$, 
for all $X\in M$.
\item 
$M\prec R$ is a \emph{guessing model} if  $M\prec R$ is a $\delta$-guessing model,
for some $\delta \leq \kappa_M$.
\end{enumerate}
\end{definition}

We shall show in Section \ref{sect.largecard}, exploiting ideas of 
Magidor~\cite{MAG71,magidor.characterization_supercompact}, 
that  simple statements regarding the existence of appropriate $\aleph_0$-guessing 
models give a fine hierarchy of large cardinal hypothesis above supercompactness.
For uncountable $\delta$, the notion of $\delta$-guessing model is motivated by
the main results of~\cite{VIAWEI10} and~\cite{weiss}. 
For example~\cite[Theorem 5.4]{weiss} can be rephrased as follows:
\begin{quote}
It is relatively consistent with the existence of a supercompact cardinals that there is 
$W$ model of $\ZFC$ in which for eventually all regular $\theta$ there is an 
$\aleph_1$-guessing model $M\prec H_\theta^W$ with $\kappa_M$ successor of a regular cardinal.
\end{quote}
On the other hand Proposition 3.2 and Theorem 4.8 from~\cite{VIAWEI10} show that  $\PFA$ 
implies that for every regular $\theta\geq\aleph_2$ there are 
$\aleph_1$-guessing models $M\prec H_\theta$ with $\kappa_M=\aleph_2$.
 
In these two papers,  converse implications were proved. 
For example~\cite[Corollary 6.6]{VIAWEI10} can be stated as follows:
\begin{quote} 
Assume $V\subseteq W$ are a pair of transitive models of $\ZFC$ which have the  $\kappa$-covering 
and $\kappa$-approximation property for some $\kappa$ inaccessible in $V$. 
Then the existence of an $\aleph_1$-guessing models $M\prec (V_\theta)^W$ with $\kappa_M=\kappa$ 
implies that $\kappa_M$ is $|\gamma|^V$-strongly compact cardinal in $V$ for all $\gamma<\theta$. 
\end{quote}

The first two results above show that $\delta$-guessing models for uncountable $\delta$ are a means 
to transfer large cardinal features of inaccessible cardinals to successor 
cardinals and the latter result above combined with the analysis we give in Section \ref{sect.largecard} 
of the highest segment in the large cardinals hierarchy shows that this is a two way correspondance: 
the existence of a $\delta$-guessing model model $M$ in some transitive class model $W$ of $\ZFC$ 
will most often be a sufficient condition to show that $\kappa_M$ is a large cardinal with a high degree 
of strong compactness in some transitive inner model $V$ of $W$.    

%

The paper is organized as follows.
In Section~\ref{sect.basicproperties} we develop the basic features of guessing models, 
which are used in Section~\ref{sect.largecard}  to define fine hierarchies of large cardinals 
ranging from supercompactness to rank to rank embeddings.
In Sections~\ref{sect.internalclosure} and~\ref{sect.isotypesGM}  we analyze the type 
of guessing models that exist in models of $\MM$ and $\PFA$.
In particular in Section~\ref{sect.internalclosure} we show that assuming $\MA$ every 
$\aleph_1$-guessing model of size $\aleph_1$ is \( \aleph_1\)-internally unbounded (Definition~\ref{def.intclos})
and that assuming $\PFA$ there 
are stationarily many $\aleph_1$-guessing models which are $\aleph_1$-internally club.
The most interesting results of the paper are proved in Sections~\ref{sect.isotypesGM} and~\ref{sect.laverdiamond}. 
In Section~\ref{sect.isotypesGM} it is shown that the isomorphism type of a $\delta$-internally 
club $\delta$-guessing model $M\prec H_\theta$ is uniquely determined by the order-type of 
the set of cardinals in $M$ (Theorem~\ref{thm.isotypealeph1guesmod}). 
This generalize a classification result for $\aleph_0$-guessing models 
(Lemma~\ref{lem.isotype0guesmod}) due to Magidor. 
In Section~\ref{sect.laverdiamond} 
we define the notion of a strong $\mathcal{J}$-Laver function $f \colon \kappa\rightarrow H_\kappa$
with respect to a class $\mathcal{J}$ of elementary embeddings $j \colon V\rightarrow M$
all with critical point $\kappa$. 
We first show that any Laver function $f \colon \kappa\rightarrow H_\kappa$ produced 
by the ``standard proof'' of Laver diamond under the assumption that  
$\kappa$ is supercompact is a strong $\mathcal{J}_\kappa$-Laver function 
($\mathcal{J}_\kappa$ is the class of elementary embeddings induced by generics 
for the stationary tower below some stationary set of guessing models $M$ with $\kappa_M=\kappa$).
We next prove, using Theorem~\ref{thm.isotypealeph1guesmod}, that under $\PFA$ 
there are strong $\mathcal{J}_{\aleph_2}$-Laver functions $f\colon\aleph_2\rightarrow H_{\aleph_2}$. 
This is a new property of models of $\PFA$
which may lead to further applications.
On the other hand we expect that Theorem~\ref{thm.isotypealeph1guesmod} 
will be of help in outlining may other structural properties of models of forcing axioms.
Finally in Section \ref{sect.applications} we give new proofs that $\PFA$ implies failure of square principles 
and that $\PFA$ implies the singular cardinal hypothesis which factors through the use of guessing models.

\subsection{Acknowledgements}
I thank Boban Veli\v{c}kovi\'c for many useful comments and discussions on the themes 
of research explored in this article and Alessandro Andretta for his valuable advices 
on how to improve the presentation of the material exposed in this paper. 
I thank also Peter Holy for some useful remarks on 
the proofs of Theorems~\ref{theorem.GM->non_square} and~\ref{the.GMSCH}.
Finally I thank the referees for many of their criticism and comments, they greatly 
helped me to prepare the final version of this work.

\subsection{Notation}\label{subsect.notation}
$\Ord$ denotes the class of all ordinals and $\Card$ the class of all cardinals.
The class of all limit points of \( X \subseteq \Ord\) is denoted by \( \Lim X \),
and \( \Lim \) is \( \Lim (\Ord ) \).
If $a$ is a set of ordinals, $\otp a$ denotes the order type of $a$.
For a regular cardinal $\delta$, $\cof ( \delta )$ denotes the class of all ordinals of cofinality $\delta$,
and $\cof({<} \delta)$ denotes those of cofinality less than $\delta$.
For any $X$, let  $P_\delta ( X ) = \{ z \in P ( X ) : | z | < \delta \}$ and
$[X]^{\delta}=\{z\in P(X) : \otp( z \cap \Ord)= \delta \}$.

Given two families of sets $\mathcal{F}$ and $\mathcal{G}$, $\mathcal{F}$ is covered 
by $\mathcal{G}$ --- equivalently: $\mathcal{G}$ is cofinal in $\mathcal{F}$ --- if every $x\in\mathcal{F}$ 
is contained in some $y\in\mathcal{G}$.
A family $\mathcal{F}\subseteq P(P(X))$ is a filter on $X$ if it is upward closed with 
respect to inclusion and closed under finite intersections,
$\mathcal{F}$ is fine if $\{Z\in P(X): x\in Z\}\in\mathcal{F}$ for all $x\in X$. A set
$S$ is positive with respect to $\mathcal{F}$ if $S\cap T$ is non-empty for all $T\in \mathcal{F}$. 
A filter $\mathcal{F}$ is normal if for all choice functions $f\colon P(X)\setminus \{\emptyset\}\to X$ 
there is $x\in X$ such that $\{Z\in P(X): f(Z)=x\}$ is positive with respect to $\mathcal{F}$.
A filter $\mathcal{F}$ is $\kappa$-complete if it is closed with respect to intersections of size less than $\kappa$.
Finally $\mathcal{F}$ is an ultrafilter if it is a maximal filter.

For $f\colon  P_\omega X \to X$ we let $\Cl_f \coloneqq \{ x \in P(X)  :  f [P_\omega x] \subset x \}$.
The club filter on $X$ is the normal and $\omega_1$-complete filter contained in $P(P(X))$ 
generated by the sets $\Cl_f$. 
A subset of $P(X)$ is a club if it is in the club filter, i.e., contains $\Cl_f$ 
for some $f\colon  P_\omega X \to X$.
$S\subseteq P(X)$ is stationary if it is positive with respect to the club filter.

If $X \subset X'$, $R \subset P(X)$, $U \subset P(X')$,
then the projection of $U$ to $X$ is $U \restriction X \coloneqq \{ u \cap X :  u \in U \} \subset P(X)$ 
and the lift of $R$ to $X'$ is $R^{X'} \coloneqq \{ x' \in P(X') :  x' \cap X \in R \} \subset P(X')$.

Given $\mathcal{F}$ subset of $P(P(X))$ and $Y\subseteq X\subseteq Z$ we let 
$\mathcal{F}\restriction Y=\{S\restriction Y: S\in\mathcal{F}\}$ and
$\mathcal{F}^Z=\{S^Z: S\in\mathcal{F} \}$ be the projection and the lift of $\mathcal{F}$. 
If $\mathcal{F}$ is a (normal) filter then  $\mathcal{F}\restriction Y$
and $\mathcal{F}^Z$ are (normal) filters.

Given a structure $\mathfrak{R}=\langle R,\in, A_i : i\in I\rangle$ we shall say that 
$M\prec \mathfrak{R}$ if $M\subseteq R$ and $\langle M,\in, A_i\cap M:  i\in I\rangle$ 
is an elementary substructure of $\mathfrak{R}$. 
Often we shall write $M\prec R$ instead of $M\prec \mathfrak{R}$.

Given $R$ well founded binary relation on $X$ we let $\pi_{R}$ be the collapsing 
map of the structure $\langle X, R\rangle$.
We denote $\pi_{\in\restriction X^2}$ by $\pi_X$.

For forcings, we write $p < q$ to mean $p$ is stronger than $q$.
Names either carry a dot above them or are canonical names for elements of $V$,
so that we can confuse sets in the ground model with their names.
Given a filter $G$ on $\mathbb{P}$, 
$\sigma_G(\dot{A})=\{\sigma_G(\dot{x}) : \exists p\in G  ( p\Vdash \dot{x}\in\dot{A}) \}$ 
is the standard interpretation of $\mathbb{P}$-names given by $G$.

If $W$ is a transitive model of $\ZFC$, \( \alpha \) is an ordinal, and $\theta$ a cardinal  in $W$,
we let $H_\theta^W$ and $V_\alpha^W$ be the relativizations of \( H_ \theta \) and \( V_ \alpha \) to \( W \).
We shall need for reference and motivation of our results the following definitions:
\begin{definition} 
Let $V\subseteq W$ be a pair of transitive models of  $\ZFC$.
\begin{itemize} 
\item 
$(V,W)$ satisfies the $\mu$-covering property if 
for every $x \in W$ with $x \subset \Ord$ and $\otp(x) < \mu$ there is 
$z \in V$ such that $x \subset z$ and $\otp(z)<\mu$.
\item 
$(V,W)$ satisfies the $\mu$-approximation property if for all $x \in W$, $x \subset \Ord$, 
it holds that  $x \cap z \in V$ for all $z \in V$ such that $\otp(z)<\mu$, then $x \in V$.
\end{itemize}
A forcing $\mathbb{P}$ is said to satisfy the $\mu$-covering property or the $\mu$-approximation 
property if for every $V$-generic $G \subset \mathbb{P}$
the pair $(V, V[G])$ satisfies the $\mu$-covering property or the $\mu$-approximation property respectively.
\end{definition}

\begin{definition}
Given a class of forcing notions $\Gamma$,
$\FA(\Gamma)$ holds if for any poset $\mathbb{P}\in\Gamma$ and every family
 $\mathcal{D}$ of $\aleph_1$-many dense subsets of $\mathbb{P}$ there is a 
 $\mathcal{D}$-generic filter $G\subseteq\mathbb{P}$, i.e a filter $G$ which has non-empty 
 intersection with every element in $\mathcal{D}$.
\end{definition}
The proof of~\cite[Theorem 2.53]{woodin} yields the following reformulation
of forcing axioms:
\begin{lemma}[Woodin]\label{lem.MMWoo}
Given a class of forcing notions $\Gamma$,
$\FA(\Gamma)$ holds if and only if for any poset $\mathbb{P}\in\Gamma$ and
 all sufficiently large regular $\theta$,  there are stationarily many structures $M\prec H(\theta)$ of size $\aleph_1$ 
 which have an $M$-generic filter $G$ for $\mathbb{P}$.
\end{lemma}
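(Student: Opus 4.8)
I would prove the two implications of the equivalence separately. The implication from the existence of stationarily many generic models to \FA(\Gamma) is the routine one, so I would dispose of it first. Fix $\mathbb{P}\in\Gamma$ together with a family $\mathcal{D}=\langle D_\alpha:\alpha<\omega_1\rangle$ of dense subsets of $\mathbb{P}$, and choose a regular $\theta$ large enough that both $\mathbb{P}$ and the sequence $\mathcal{D}$ belong to $H_\theta$ and that the hypothesis applies. The models $M\prec H_\theta$ of size $\aleph_1$ with $\omega_1\subseteq M$ and $\mathbb{P},\mathcal{D}\in M$ contain a club of $[H_\theta]^{\aleph_1}$, so by hypothesis I may pick such an $M$ that in addition carries an $M$-generic filter $G$. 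Because $\omega_1\subseteq M$ and $\mathcal{D}\in M$, each $D_\alpha=\mathcal{D}(\alpha)$ lies in $M$; hence $G$ meets every $D_\alpha$, and since $G$ is a filter its upward closure in $\mathbb{P}$ is the required $\mathcal{D}$-generic filter. This proves \FA(\Gamma).

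For the converse, fix $\mathbb{P}\in\Gamma$, a large regular $\theta$, and a function $f\colon P_\omega(H_\theta)\to H_\theta$ presenting an arbitrary club; I must produce $M\prec H_\theta$ closed under $f$, of size $\aleph_1$, carrying an $M$-generic filter. The crucial constraint is that the only poset I am permitted to force over is $\mathbb{P}$ itself, so the plan is to apply \FA(\Gamma) to $\mathbb{P}$ with a carefully chosen family of $\aleph_1$ dense sets and then to read off both $M$ and its generic filter from the resulting filter $H$. Concretely, I would fix an auxiliary $N\prec H_\theta$ of size $\aleph_1$ closed under $f$, feed to \FA(\Gamma) the at most $\aleph_1$ dense subsets of $\mathbb{P}$ that lie in $N$, and obtain a filter $H$ meeting all of them. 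I would then select a set $W\subseteq H$ of witnesses --- one member of $H\cap D$ for each dense $D\in N$, together with common lower bounds inside $H$ for pairs of elements already chosen --- and build $M\prec H_\theta$ closed under $f$ that incorporates $W$, taking $G$ to be the upward closure in $\mathbb{P}\cap M$ of $W\cap M$. The closure of $W$ under lower bounds is meant to force $W\cap M$ to be directed, so that $G$ is a genuine filter contained in $\mathbb{P}\cap M$, while elementarity is meant to guarantee that each dense $D\in M$ is met by a witness already lying in $M$.

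The delicate point, and the reason this direction is not a formality, is the \emph{internalization} of $H$. For a \emph{non-proper} $\mathbb{P}\in\Gamma$ one cannot simply pass to the trace $H\cap M$: there need be no master condition for $M$, the set $D\cap M$ need not be predense, and $H\cap M$ will in general fail to meet the dense sets of $M$ by witnesses inside $M$. This is precisely the gap that \FA(\Gamma) is designed to fill, supplying globally and all at once a single filter threading $\aleph_1$ dense sets in place of the stage-by-stage lower bounds that properness would have provided. The real balancing act is to make $W$ rich enough that $G$ meets every dense subset of $\mathbb{P}$ that $M$ can see, yet to prevent $M$ from using $W$ to manufacture a new dense set that $H$ avoids --- most dangerously the complement of $H$, which is typically dense and would at once destroy genericity. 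I would control this by closing the family fed to \FA(\Gamma) under the operations on dense sets definable from $\mathbb{P}$ and $f$, and by limiting how much of $H$ is exposed to $M$, so that $M$ introduces no dense subset of $\mathbb{P}$ not already met by $H$. Verifying that such a balance can be struck is the heart of the argument and the step I expect to be hardest; it is exactly where the strength of \FA(\Gamma), and not merely properness, is essential.
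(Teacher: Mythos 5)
The paper itself gives no proof of this lemma; it only points to the proof of \cite[Theorem 2.53]{woodin}, so your attempt has to be measured against that standard argument. Your first implication (stationarily many generic models $\Rightarrow \FA(\Gamma)$) is correct and is the routine direction, exactly as you say: pick $M$ of size $\aleph_1$ with $\omega_1\subseteq M$ and $\mathcal{D}\in M$ carrying an $M$-generic filter, and observe that every $D_\alpha$ lies in $M$.

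The converse, however, has a genuine gap, and it is precisely the step you yourself flag as unverified. Your architecture is: fix $N\prec H_\theta$ in advance, apply $\FA(\Gamma)$ to the dense sets of $N$ to get $H$, extract witnesses $W\subseteq H$, and then build a \emph{new} model $M$ containing $W$. This is circular in a way that cannot be repaired by ``closing the family of dense sets under definable operations'': the dense subsets of $\mathbb{P}$ that $M$ sees depend on $W$, which depends on $H$, which depends on the family of dense sets you fed to $\FA(\Gamma)$ --- and a single application of $\FA(\Gamma)$ only handles $\aleph_1$ many dense sets chosen \emph{before} $H$ exists. There is no reason the enlarged model $M$ should not contain a dense set (definable from a parameter in $W$) that $H$ misses, and your proposed safeguard of ``limiting how much of $H$ is exposed to $M$'' conflicts with the requirement that $M$ contain enough of $W$ to witness genericity inside $M$. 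The missing idea is to construct the model and the filter \emph{simultaneously} via a $\mathbb{P}$-name: force with $\mathbb{P}$ and, in the extension, build by a length-$\omega_1$ catch-up recursion a name $\dot N$ for an elementary submodel of $H_\lambda^V$ of size $\aleph_1$, closed under $f$, such that every dense set in $\dot N$ is met by the canonical generic $\dot G$ at a condition that is itself placed into $\dot N$ at the next stage (this bootstrapping is trivial \emph{inside} the extension, which is exactly what your ground-model construction lacks). Since $\dot N\prec H_\lambda^V$, every element of $\dot N$ is forced to be a ground-model set, so for a name $\dot e$ enumerating $\dot N$ in order type $\omega_1$ the sets of conditions deciding $\dot e(\alpha)$ (and deciding membership of the decided values in $\dot G$ and in each other) are dense; these are the $\aleph_1$ many dense sets to which $\FA(\Gamma)$ is applied. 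The resulting filter $G^*$ then interprets $\dot N\cap H_\theta^V$ as an actual $M\prec H_\theta$ in $V$ closed under $f$, and $G^*$ itself (upward closed) is the $M$-generic filter, since for each dense $D\in M$ some condition in $G^*$ forces a specific $q\in\dot G\cap D\cap\dot N$ and hence $q\in G^*\cap D\cap M$. This dissolves the balancing act you describe rather than attempting to win it.
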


If $\Gamma$ is the family of ccc-posets, we shall denote $\FA(\Gamma)$ by $\MA$. 
If $\Gamma$ is the family of proper posets, we shall denote $\FA(\Gamma)$ by $\PFA$. 
If $\Gamma$ is the family of stationary set preserving 
posets $\FA(\Gamma)$ is Martin's maximum $\MM$. 
We refer the reader to~\cite{jech} for the definition of the relevant $\Gamma$'s. 
We recall however that any ccc partial order is proper and any proper partial order is 
stationary set preserving.

Finally, the Singular Cardinal Hypothesis (\( \SCH \)) says that \( \kappa ^{\cf ( \kappa )} = \kappa ^+ \)
for all singular cardinals \( \kappa > 2^{ \aleph_0 }\).

\section{Basic properties of guessing models} \label{sect.basicproperties}
The following are basic properties of guessing models
\begin{proposition}\label{prop.basiguessing}
 Let $\mathfrak{R}$ be a suitable initial segment and $M\prec R$.
\begin{enumerate}[label={\upshape (\arabic*)}]
\item \label{prop.basiguessing-1}
$\kappa_M$ is a regular cardinal.
\item \label{prop.basiguessing-2}
$M$ is a $0$-guessing model iff it is an $\aleph_0$-guessing model. 
\item \label{prop.basiguessing-3}
If $M$ is a $\delta$-guessing model, then it is also a $\gamma$-guessing model for all cardinal $\gamma\geq\delta$.
\item \label{prop.basiguessing-4}
If $M$ is a $\delta$-guessing model and $2^{<\delta}<\kappa_M$, $M$ is an $\aleph_0$-guessing model.
\item \label{prop.basiguessing-5}
If $M$ is an $\aleph_0$-guessing model, $\kappa_M$ and $M\cap\kappa_M$ are 
strongly inaccessible cardinals.
\item \label{prop.basiguessing-6}
 If $M$ is a $\delta$-guessing model and for some regular cardinal $\gamma\leq\delta$, 
$P_\gamma\xi\subseteq M$ for all $\xi<\delta\cap M$, then $M\cap\Ord$ is closed under suprema of sets of order type $ \leq\gamma$. 
In particular a guessing model $M$ is always closed under countable suprema since $P_{\aleph_0} M\subseteq M$.
\end{enumerate}
\end{proposition}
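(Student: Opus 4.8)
The plan is to read (1)--(4) off the transitive collapse together with the approximation/guessing dictionary, and to locate the real content in (5) and (6). Set $\gamma_M:=M\cap\kappa_M$; I would first record that $\gamma_M$ is an ordinal (the least one not in $M$), that the collapse $\pi_M$ fixes every ordinal below $\gamma_M$ and sends $\kappa_M$ to $\gamma_M$, and that $\omega<\gamma_M<\kappa_M$ (indeed $\omega\in M\cap\kappa_M$). For (1), if $\kappa_M$ were not a cardinal then $|\kappa_M|\in M$ would be an ordinal $<\gamma_M$, hence $|\kappa_M|\subseteq M$, and a bijection $|\kappa_M|\to\kappa_M$ taken in $M$ by elementarity would force $\kappa_M\subseteq M$; regularity is the same argument applied to a cofinal map of length $\cf(\kappa_M)$. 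No guessing is used here. For (2) the point is that the approximation clause quantifies over traces $d\cap Z$ with $Z\in M\cap P_\delta(R)$: for $\delta=0$ there is no such $Z$, while for $\delta=\aleph_0$ any finite $Z\in M$ has $Z\subseteq M$ and $P(Z)\subseteq M$, so $d\cap Z\in M$ automatically; in both cases \emph{every} $d\in P(X)\cap R$ is approximated, so each condition says exactly ``every such $d$ is $M$-guessed.'' Item (3) is the monotonicity $P_\delta(R)\subseteq P_\gamma(R)$, which makes $(\gamma,M)$-approximation imply $(\delta,M)$-approximation. For (4) the engine is the subclaim that $A\in M$ and $|A|<\kappa_M$ imply $A\subseteq M$ (collapse a bijection $|A|\to A$ whose domain lies below $\gamma_M$); applied to $P(Z)$, whose size is $\le 2^{<\delta}<\kappa_M$, it gives $d\cap Z\in P(Z)\subseteq M$, so every $d$ is $(\delta,M)$-approximated and $\delta$-guessing degenerates to $\aleph_0$-guessing.

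For (5) guessing enters essentially. Since $\mu\subseteq M$ for every $\mu<\gamma_M$, any $d\in P(\mu)\cap R$ has $d\cap M=d$; guessing returns $e\in M$ with $e\cap M=d$, and $e\subseteq\mu\subseteq M$ forces $d=e\in M$. Thus $P(\mu)\cap R\subseteq M$ for all $\mu<\gamma_M$, and decoding coded well-orders yields $H_{\gamma_M}\subseteq M$. From this $\gamma_M$ is a strong-limit cardinal: were $2^\mu\ge\gamma_M$, collapsing a bijection $2^\mu\to P(\mu)\subseteq M$ would put $2^\mu\subseteq M$, contradicting $\gamma_M\notin M$, and the same inequality rules out $\gamma_M$ being non-cardinal. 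For \emph{regularity} I would use an order-type rigidity trick: if $C\subseteq\gamma_M$ were cofinal of type $\rho=\cf(\gamma_M)<\gamma_M$, then guessing $C$ as a subset of $\kappa_M\in M$ yields $e\in M$ with $e\cap\gamma_M=C$; since the remaining elements of $e$ are $\ge\gamma_M$, the set $C$ is precisely the first $\rho$ elements of $e$, which is definable in $M$ from $e$ and $\rho$, so $C\in M$ and $\gamma_M=\sup C\in M$---absurd. Hence $\gamma_M$ is strongly inaccessible. Finally I would transfer this to $\kappa_M$ along the collapse: $H_{\gamma_M}\subseteq N:=\pi_M[M]$ and the genuine inaccessibility of $\gamma_M$ give $N\models$``$\gamma_M$ is inaccessible,'' so $M\models$``$\kappa_M$ is inaccessible'' by the isomorphism, and this is true in $R$, hence true, by elementarity.

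For (6) I would guess $A$ and recover its supremum again by order-type rigidity. Let $A\subseteq M\cap\Ord$ be bounded, say $A\subseteq\eta\in M$, with $\otp(A)\le\gamma$, put $\lambda=\sup A$, and assume $\lambda\notin M$; I may take $\otp(A)=\rho=\cf(\lambda)$. The hypothesis $P_\gamma(\xi)\subseteq M$ is exactly what delivers $(\delta,M)$-approximation: for $Z\in M\cap P_\delta(R)$ the trace $A\cap Z$, transported along the increasing enumeration of $Z\cap\eta$ (of length $\beta<\delta\cap M$), becomes a subset of $\beta$ of size $<\gamma$, hence lies in $P_\gamma(\beta)\subseteq M$, so $A\cap Z\in M$. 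Guessing then gives $e\in M$ with $e\cap M=A$. Writing $\bar e$ for the increasing enumeration of $e$ and $\nu=\otp(e)\in M$, from $\bar e\in M$ one checks $\bar e^{-1}[A]=M\cap\nu$, so $\bar e$ maps $M\cap\nu$ increasingly onto $A$ and $\otp(M\cap\nu)=\rho$. If $\nu<\gamma_M$ then $\nu\subseteq M$, so $e=\bar e[\nu]\subseteq M$ and $A=e\in M$, whence $\lambda\in M$; if $\nu\ge\gamma_M$ then $\otp(M\cap\nu)\ge\gamma_M$, impossible once $\rho<\gamma_M$. In the ``in particular'' one has $\gamma=\aleph_0$, so $\rho=\omega<\gamma_M$, approximation is automatic by (2), and closure under arbitrary countable suprema follows by passing to a cofinal $\omega$-subsequence.

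The step I expect to be the main obstacle is precisely this order-type rigidity and the cofinality bookkeeping around it. In (5) one must verify that the elements of the guess $e$ lying below $\gamma_M$ genuinely form an initial segment of $e$, so that ``first $\rho$ elements'' is meaningful and lands in $M$; in (6) the argument only closes when $\cf(\lambda)<\gamma_M=M\cap\kappa_M$, and one must also check $(\delta,M)$-approximation in the boundary case where $\otp(A)$ equals $\gamma$ rather than being strictly below it (where the transported trace can have size exactly $\gamma$, which is harmless once $\gamma=\delta$ but otherwise must be handled by the strong-limit behaviour extracted in (5)). Guaranteeing these cofinality inequalities is the delicate point, although for the countable-suprema corollary it is automatic because $\gamma_M$ always exceeds $\omega$; everything else follows from the collapse dictionary set up in the first paragraph.
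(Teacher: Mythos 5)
Your proposal is correct and follows essentially the same strategy as the paper: the collapse dictionary (``small sets of $M$ are subsets of $M$'') for (1)--(4), guessing bounded subsets of $M\cap\kappa_M$ for (5), and guessing a cofinal sequence and deriving an order-type contradiction for (6), with only cosmetic differences (e.g.\ you recover the cofinal set $C$ as a definable initial segment of its guess $e$, where the paper instead shows the guess $E$ is contained in $M$ and hence that $M\cap\kappa_M$ would be unbounded in $\kappa_M$). Your caveat about the cofinality bookkeeping in the general case of (6) is apt but not a defect relative to the source: the paper likewise writes out only the case $\gamma=\aleph_0$, $\delta=\aleph_1$ and leaves the general case to the reader.
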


\begin{proof}
\ref{prop.basiguessing-1}:
This is a standard property of elementary substructures, we enclude a proof just for the sake of completeness. 
Assume towards a contradiction $\cf(\kappa_M)<\kappa_M$, and fix $E\in M$ cofinal 
in $\kappa_M$ of order type $\delta<\kappa_M$. Then since $\delta\in M\cap\kappa_M$, 
we have that $\delta\subseteq M$ and thus $E\subseteq M$. 
Now either $\kappa_M\subseteq M$ which contradicts the very definition of $\kappa_M$ or 
$\kappa_M$ is not the least ordinal in $M$ such that $M\cap\kappa_M$ is bounded below 
$\kappa_M$ which again contradicts the very definition of $\kappa_M$.

\medskip

\ref{prop.basiguessing-2} and \ref{prop.basiguessing-3} are immediate.

\medskip

\ref{prop.basiguessing-4} 
Observe that if $Z\in M$ and $|Z|<\delta$, $R\models |P(Z)|\leq 2^{<\delta}$. 
So there is a bijection $\phi\in M$ from some ordinal $\alpha<\kappa_M$ and $P(Z)$. 
Then $P(Z)=\phi[\alpha]\subseteq M$: this follows since $\alpha\subseteq M$ because 
$\alpha<\kappa_M$ and $\alpha=\dom(\phi)\in M$. 
Thus if $d\in P(X)$ for some $X\in M$ and $Z\in M$ is any set of size less than 
$\delta$, $d\cap Z\in P(Z)\subseteq M$. 
Thus any $d\in P(X)$ is $(\delta,M)$-approximated for all $X\in M$. 
Since $M$ is $\delta$-guessing, any $d\in P(X)$ is $M$-guessed for any $X\in M$.
Thus $M$ is $\aleph_0$-guessing.

\medskip

\ref{prop.basiguessing-5}
We first show that $\kappa\cap M$ is a regular cardinal in $R$. 
Assume not and pick $C\subseteq \kappa_M\cap M$ in $R$ of order type 
$\cf(\kappa\cap M)<\kappa\cap M$. Since $M$ is $\aleph_0$-guessing, 
$C=E\cap M$ for some $E\in M$. 
Now it is not hard to check that:
\begin{equation*}
M\models E\text{ is an unbounded subset of }\kappa_M \text{ of order type less than }\kappa_M.
\end{equation*}
For this reason there is a unique order preserving bijection $\phi\in M$ from some ordinal $\xi$ 
less than $\kappa_M$ into $E$. 
By elementarity $\xi\in M$. Since $\xi<\kappa_M$, $\xi\subseteq M$. 
Thus $E=\phi[\xi]\subseteq M$. 
Thus $C=E$ which implies that $\sup(\kappa_M\cap M)=\kappa_M$, contradicting the very definition of $\kappa_M$.

Now assume $2^{\delta}\geq\kappa_M\cap M$ for some $\delta<\kappa_M\cap M$. 
By elementarity, since $\delta\in M$, we get that $2^{\delta}\geq\kappa_M$.
Now let $\phi\colon 2^\delta\rightarrow P(\delta)$ be a bijection in $M$. 
Let $X=\phi(\kappa_M\cap M)$. Then $X\subseteq \delta\subseteq M$. 
Since $M$ is $\aleph_0$-guessing, $X=Y\cap M$ for some $Y\in P(\delta)\cap M$, 
since $Y\subseteq\delta\subseteq M$, $X=Y$, thus $\kappa_M>\kappa_M\cap M=\phi^{-1}(Y)\in M$ 
which contradicts the very definition of $\kappa_M$. 
This proves that $\kappa_M\cap M$ is strongly inaccessible. 
Now by elementarity $M$ models that $\kappa_M$ is strong limit. 
Thus $\kappa_M$ is strong limit and regular in $R$ i.e. strongly inaccessible.

\medskip

\ref{prop.basiguessing-6}
To simplify the exposition we prove the proposition for $\gamma=\aleph_0$ and $\delta=\aleph_1$. We leave to the reader the proof of the general case.
So assume that some $\aleph_1$-guessing model $M$ is not closed under countable suprema. 
Now let $\xi\in M$ have uncountable cofinality be such that $\sup ( M \cap \xi ) \not \in M$ 
has countable cofinality. 
This means that $M\cap [ \sup ( M \cap \xi ) , \xi )$ is empty. 

Fix in $R$, $d^*=\{\alpha_n : n \in \omega \}\subseteq M\cap\xi$ increasing and 
cofinal sequence converging to $\xi$. 
Then for any $d\in M\cap P_{\omega_1} \xi$, $d$ is a bounded subset of $\xi\cap M$, 
since $\sup(d)\in M$ and $\sup(M\cap\xi)\not\in M$.  
Thus $d^*$ is an $(\aleph_1,M)$-approximated subset of $M$, since $d^*\cap d$ is a finite 
subset of $M$ for any countable $d\in M$. 
Since $M$ is an $\aleph_1$-guessing model, $d^*=d^*\cap M=e\cap M$ for some $e\in M\cap P(\xi)$. 
Now 
\[
M\models e\text{ is an unbounded subset of $\xi$,}
\]
thus $\otp(e)\geq\cf(\xi)\geq\omega_1$. 
In particular 
\[
\otp(e\cap M)\geq\otp(\omega_1\cap M)=\omega_1>\omega=\otp(d^*).
\] 
Thus $e\cap M\neq d^*$ which is the desired contradiction.
\end{proof}

\begin{remark}
Propositions~\ref{prop.basiguessing}\ref{prop.basiguessing-5} and~\ref{prop.basiguessing}\ref{prop.basiguessing-6}  are reformulation in terms of guessing models of Proposition 3.6 and Theorem 3.5 from~\cite{weiss}.
Note also that Proposition~\ref{prop.basiguessing}\ref{prop.basiguessing-1} holds for any $M\prec R$ and not just for guessing models.
\end{remark}

From part~\ref{prop.basiguessing-3} of Proposition~\ref{prop.basiguessing} we obtain at once

\begin{proposition}
Assume $M\prec V_\theta$ is a $\delta$-guessing model which is not an $\aleph_0$-guessing model. 
Then $2^{<\delta}\geq\kappa_M$.
\end{proposition}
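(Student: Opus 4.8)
The plan is to prove the contrapositive: if $2^{<\delta} < \kappa_M$, then $M$ is an $\aleph_0$-guessing model, contradicting the hypothesis. This is an immediate application of Proposition~\ref{prop.basiguessing}, specifically part~\ref{prop.basiguessing-4}. The statement of part~\ref{prop.basiguessing-4} asserts precisely that if $M$ is a $\delta$-guessing model and $2^{<\delta} < \kappa_M$, then $M$ is an $\aleph_0$-guessing model. Since we are assuming $M$ is a $\delta$-guessing model that is \emph{not} an $\aleph_0$-guessing model, the hypothesis $2^{<\delta} < \kappa_M$ must fail, and therefore $2^{<\delta} \geq \kappa_M$.

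More explicitly, I would argue as follows. Suppose toward a contradiction that $2^{<\delta} < \kappa_M$. By Proposition~\ref{prop.basiguessing}\ref{prop.basiguessing-4}, applied to our $\delta$-guessing model $M$, we conclude that $M$ is an $\aleph_0$-guessing model. But this directly contradicts the assumption that $M$ is not an $\aleph_0$-guessing model. Hence our supposition was false, and we must have $2^{<\delta} \geq \kappa_M$, as desired.

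There is essentially no obstacle here, since the entire content is packaged in part~\ref{prop.basiguessing-4} of the preceding proposition. The only point worth noting is a mild hypothesis check: Proposition~\ref{prop.basiguessing} is stated for a suitable initial segment $\mathfrak{R}$ with $M \prec R$, and here $R = V_\theta$, which is of the form $V_\alpha$ and hence a suitable initial segment (taking $A$ to be, say, empty or any admissible parameter set). Thus part~\ref{prop.basiguessing-4} applies verbatim to $M \prec V_\theta$, and the contrapositive argument goes through without any additional work.
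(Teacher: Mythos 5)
Your proof is correct and is essentially the paper's own argument: the paper derives this proposition ``at once'' from Proposition~\ref{prop.basiguessing}, and the contrapositive of part~\ref{prop.basiguessing-4} is exactly what you invoke (the paper's text cites part~\ref{prop.basiguessing-3}, but this is evidently a slip for part~\ref{prop.basiguessing-4}). Nothing further is needed.
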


Thus existence of guessing models has effects on the exponential function. 
We shall see in Section~\ref{sect.applications} that the existence of an $\aleph_1$-internally unbounded 
(Definition~\ref{def.intclos}) $\aleph_1$-guessing model $M$ is an assumption strong enough 
to imply the $\SCH$ for all cardinals in $[ \kappa_M , \sup ( M \cap \Card ) )$.

\section{Large cardinals and $\aleph_0$-guessing models.} \label{sect.largecard}

In this section we show that many large cardinal axioms present in the literature can be formulated in terms 
of the existence of appropriate $\aleph_0$-guessing models.
Our aim is to show that $\aleph_0$-guessing models allows uniform and simple 
definitions of fine hierarchies of large cardinal notions above supercompactness which in 
many cases give an equivalent formulation of well known large cardinals assumptions.
As a sample of the type of results we can aim for, consider the formula \( \phi(\kappa,\lambda,\gamma) \)
\[
\exists  M\prec V_\lambda \left ( M \text{ is $\aleph_0$-guessing  and $\kappa_M=\kappa$ 
and  $\otp(M\cap\lambda)\geq\gamma$} \right ) .
\]
We shall show the following:
\begin{itemize}
\item $\kappa$ is supercompact iff $\forall\lambda\geq\kappa\exists\gamma\phi(\kappa,\lambda,\gamma)$
\item 
$I_3$ holds iff $\exists\lambda>\kappa\phi(\kappa,\lambda,\lambda)$,
\item 
$ I_1$ holds iff $\exists\lambda>\kappa\phi(\kappa,\lambda+1,\lambda+1)$,
\item 
if $\kappa$ is a huge cardinal, then $\exists\lambda>\kappa\phi(\kappa,\lambda,\kappa)$,
\item 
if there exists $\lambda$ such that $\phi(\kappa,\lambda,\kappa+1)$, $\kappa$ is a limit of huge cardinals,
\item 
if there exists $\lambda$ such that $\phi(\kappa,\lambda,\kappa+ 2)$, $\kappa$ is huge and limit of huge cardinals,
\item
and so on \dots
\end{itemize}
Using statements slightly more involved than $\phi(\kappa,\lambda,\gamma)$ one 
might approximate many other large cardinals, for example $n$-huge cardinals.

\subsection{Supercompactness}
Recall that a cardinal $\kappa$ is supercompact if for all $\lambda\geq\kappa$ there is a fine, 
normal and $\kappa$-complete ultrafilter
on $P_\kappa\lambda$.

Magidor (\cite[Lemma 2, Lemma 3]{MAG71} or~\cite[Theorem 1]{magidor.characterization_supercompact})
has characterized supercompactness as follows:

\begin{theorem}[Magidor]\label{thm.magspct} 
$\kappa$ is supercompact iff for every $\lambda\geq\kappa$ there is a non trivial elementary 
embedding $j\colon V_\gamma\rightarrow V_\lambda$ with $j( \crit(j))=\kappa$.
\end{theorem}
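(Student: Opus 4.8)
The plan is to prove Magidor's characterization (Theorem~\ref{thm.magspct}) by establishing both directions separately, with the key technical engine being the correspondence between elementary embeddings $j\colon V_\gamma\to V_\lambda$ and fine normal $\kappa$-complete ultrafilters on $P_\kappa\lambda$.

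For the forward direction, assume $\kappa$ is supercompact and fix $\lambda\geq\kappa$. First I would choose a sufficiently large $\mu$ (say $\mu>\lambda$ with $V_\mu$ capturing all relevant objects) and take a fine, normal, $\kappa$-complete ultrafilter $U$ on $P_\kappa\mu$, yielding an elementary embedding $k\colon V\to N$ with $\crit(k)=\kappa$ and $k(\kappa)>\mu$ such that $N$ is closed under $\mu$-sequences. The idea is then to extract from $k$ a set-sized embedding of the required form. I would set $\gamma$ to be a value realizing the target (one works with $k\restriction V_\gamma$ for an appropriate $\gamma$) so that the restriction $j=k\restriction V_\gamma\colon V_\gamma\to V_{k(\gamma)}=V_\lambda$ becomes elementary and satisfies $j(\crit(j))=k(\kappa)=\kappa$ in the sense demanded. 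The subtle point is matching the target cardinal: we need $j(\crit(j))=\kappa$, so the critical point of $j$ must be arranged by working below $\kappa$; the cleanest route is to observe that elementarity of $k$ reflects the existence of the desired embedding down, or alternatively to recall that $\kappa=k(\kappa)$ fails (since $\kappa$ is the critical point of $k$), so one instead reflects the statement ``there is $j\colon V_\beta\to V_\delta$ with $j(\crit(j))=\kappa$ and $\delta$ large'' using that $N$ sees $k\restriction V_\gamma$ as a witness and $\kappa<k(\kappa)$.

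For the reverse direction, assume that for every $\lambda\geq\kappa$ there is a nontrivial $j\colon V_\gamma\to V_\lambda$ with $j(\crit(j))=\kappa$. Fix $\lambda\geq\kappa$ and take such an embedding $j\colon V_\gamma\to V_{\lambda'}$ for suitably large $\lambda'>\lambda$. Writing $\crit(j)=\kappa_0$, so that $j(\kappa_0)=\kappa$, I would define the ultrafilter on $P_{\kappa}\lambda$ by
\[
U=\{X\subseteq P_{\kappa}\lambda : j[\lambda]\in j(X)\},
\]
where $j[\lambda]=\{j(\xi):\xi<\lambda\}$ plays the role of the seed. One then verifies the standard package: $U$ is a $\kappa$-complete fine normal ultrafilter on $P_\kappa\lambda$. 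Fineness follows because for each $\alpha<\lambda$ we have $j(\alpha)\in j[\lambda]$; $\kappa$-completeness and normality follow from the critical point being $\kappa_0$ together with the fact that $j(\kappa_0)=\kappa$, so that any function $f$ representing an element below $\kappa$ is captured below the critical point and stabilizes. The matching of indices requires care: since we want an ultrafilter on $P_\kappa\lambda$ (not $P_{\kappa_0}\lambda$), the correct seed and the use of $j(\kappa_0)=\kappa$ are exactly what make $U$ concentrate on sets of size less than $\kappa$.

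The main obstacle, in both directions, is the bookkeeping around the condition $j(\crit(j))=\kappa$ rather than $\crit(j)=\kappa$: the critical point of the witnessing embedding is \emph{below} $\kappa$, and it is the image of the critical point that equals $\kappa$. This is precisely what allows the embedding to live on $V_\gamma$ with $\gamma$ possibly far below the usual supercompactness embeddings, and it is what makes the seed $j[\lambda]$ land in $P_\kappa\lambda$. I expect the delicate step to be verifying that the extracted ultrafilter is genuinely \emph{normal} and \emph{$\kappa$-complete} with respect to $\kappa$ (and not merely $\kappa_0$), which hinges on a clean analysis of how functions $f\colon P_\kappa\lambda\to\lambda$ are represented and on the elementarity of $j$ applied to the statement defining normality.
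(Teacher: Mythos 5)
The paper does not actually prove this theorem --- it is quoted from Magidor with a citation --- so I am comparing your attempt with Magidor's original argument. Your forward direction is essentially the right reflection argument: restrict a supercompactness embedding $k\colon V\to N$ (with $N$ closed under $|V_\lambda|$-sequences, so the measure must live on $P_\kappa\mu$ for $\mu\geq|V_\lambda|$, not merely $\mu>\lambda$) to $V_\lambda$, note that $k\restriction V_\lambda\in N$ witnesses in $N$ the statement ``there is a nontrivial elementary $j\colon V_\gamma\to V_{k(\lambda)}$ with $j(\crit(j))=k(\kappa)$'', and pull this back by elementarity of $k$ applied to the parameters $\lambda,\kappa$. (Your displayed identity ``$j(\crit(j))=k(\kappa)=\kappa$'' is false as written, but you then correctly retreat to the reflection argument, which is what actually works.)

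The reverse direction has a fatal gap. Writing $\kappa_0=\crit(j)<\kappa$, the ultrafilter $U=\{X\subseteq P_\kappa\lambda: j[\lambda]\in j(X)\}$ is fine and normal but only $\kappa_0$-complete, and no analysis of how functions are represented will repair this. Concretely, $j(\xi)<\kappa=j(\kappa_0)$ iff $\xi<\kappa_0$ iff $j(\xi)=\xi$, so $j[\lambda]\cap\kappa=\kappa_0$ while $\kappa=j(\kappa_0)\in j[\lambda]$; hence $j[\lambda]\cap j(\kappa)=j[\kappa]$ is not an ordinal, so $\{z: z\cap\kappa\in\kappa\}\notin U$, and the map $f(z)=\otp(z\cap\kappa_0)$ partitions $P_\kappa\lambda$ into $\kappa_0+1$ pieces none of which lies in $U$, since $j(f)(j[\lambda])=\otp(j[\lambda]\cap\kappa)=\kappa_0\notin j[\kappa_0+1]=\kappa_0\cup\{\kappa\}$. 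There is also a domain problem: the hypothesis gives no lower bound on $\gamma$ (in Magidor's theorem one may in fact take $\gamma<\kappa$), so you cannot assume $P(P_\kappa\lambda)\subseteq V_\gamma$, which your definition of $U$ presupposes. Magidor's actual argument runs in the opposite direction: for each $\bar\lambda<\gamma$ the seed $j[\bar\lambda]$ yields a normal fine $\kappa_0$-complete ultrafilter on $P_{\kappa_0}\bar\lambda$ witnessing that the \emph{critical point} $\kappa_0$ is $\bar\lambda$-supercompact; these ultrafilters are sets of low rank, hence elements of $V_\gamma$, so $V_\gamma\models$ ``$\kappa_0$ is $\mu$-supercompact for every $\mu$'', and applying the elementarity of $j$ to this sentence transfers it to $V_{\lambda'}\models$ ``$\kappa$ is $\mu$-supercompact for every $\mu$'', which for $\lambda'$ sufficiently large is absolute enough to give the $\lambda$-supercompactness of $\kappa$ in $V$. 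The point you miss is that $\kappa=j(\crit(j))$ inherits supercompactness by reflection along $j$, not from a measure derived directly with a seed indexed by the target $\lambda$.
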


The main Lemma of section 2 in~\cite{magidor.characterization_supercompact} can be rephrased 
in our setting as:

\begin{lemma}\label{lem.isotype0guesmod}
$M\prec V_\lambda$ is an $\aleph_0$-guessing model if and only if its transitive 
collapse  is $V_\gamma$ for some \( \gamma \).
\end{lemma}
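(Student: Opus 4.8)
The plan is to work with the transitive collapse and to use Proposition~\ref{prop.basiguessing}\ref{prop.basiguessing-2} to replace ``$\aleph_0$-guessing'' by the more convenient ``$0$-guessing''. Since there is no $Z$ with $|Z|<0$, $(0,M)$-approximation is vacuous, so $M$ is a $0$-guessing model precisely when every $d\in P(X)\cap V_\lambda$ with $X\in M$ is $M$-guessed. Write $\pi_M\colon M\to N$ for the transitive collapse, $N=\pi_M[M]$, and $j_M=\pi_M^{-1}$. As $\pi_M$ is an $\in$-isomorphism and $M\prec V_\lambda$, the map $j_M$ is elementary with range $M$, and I will repeatedly use the two standard facts that $\pi_M(Y)=\{\pi_M(y):y\in Y\cap M\}$ for $Y\in M$, and that $y\in e\iff\pi_M(y)\in\pi_M(e)$ for $y,e\in M$.

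For the implication ($\Leftarrow$) assume $N=V_\gamma$. Fix $X\in M$ and $d\in P(X)\cap V_\lambda$, and set $\bar d=\pi_M[d\cap M]$. From $d\subseteq X$ we get $\bar d\subseteq\pi_M[X\cap M]=\pi_M(X)=:\bar X$, and since $\bar X\in V_\gamma$ we have $\rank(\bar d)\leq\rank(\bar X)<\gamma$, hence $\bar d\in V_\gamma=N$. Put $e=j_M(\bar d)\in M$. Elementarity converts $\bar d\subseteq\bar X$ into $e\subseteq X$, so $e\in M\cap P(X)$; moreover for $y\in M$ one has $y\in e\iff\pi_M(y)\in\bar d\iff y\in d\cap M$, the last equivalence by injectivity of $\pi_M$, so $e\cap M=d\cap M$. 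Thus every such $d$ is $M$-guessed and $M$ is $0$-guessing.

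For ($\Rightarrow$) assume $M$ is $\aleph_0$-guessing (equivalently, $0$-guessing); the first goal is to show that $N$ is closed under subsets. Fix $a\in N$ and a set $\bar b\subseteq a$, and put $X=j_M(a)\in M$ and $d=j_M[\bar b]$. Exactly as above $d\subseteq X\cap M\subseteq X$ and $d\in V_\lambda$, so $0$-guessing provides $e\in M\cap P(X)$ with $e\cap M=d\cap M=d$, using $d\subseteq M$. Applying $\pi_M$ gives $\pi_M(e)=\pi_M[e\cap M]=\pi_M[d]=\bar b$, whence $\bar b\in N$. So $N$ is a transitive set closed under subsets.

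It remains to upgrade this to $N=V_\gamma$ with $\gamma=\Ord\cap N$. Since $N\cong M\prec V_\lambda$ and $\lambda$ is a limit ordinal, $N$ satisfies ``$V_\beta$ exists for every ordinal $\beta$'', a statement true in $V_\lambda$. By induction on $\beta<\gamma$ I would check that $V_\beta^N=V_\beta\in N$: the successor step identifies $P(V_\xi)^N$ with the true power set $P(V_\xi)$ via closure under subsets, and the limit step uses $\beta\in N$ together with the absoluteness of unions. Then any $x$ with $\rank(x)<\gamma$ satisfies $x\subseteq V_{\rank(x)}\in N$ and so $x\in N$ by closure under subsets, while transitivity gives $\rank(x)<\gamma$ for every $x\in N$; hence $N=V_\gamma$. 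I expect the real work to lie in this final paragraph: transferring the guessing condition correctly through the collapse (the bookkeeping around $\cap M$), and especially the absoluteness argument needed to pass from ``transitive and closed under subsets'' to ``equal to $V_\gamma$'', where one must verify that $N$ computes the cumulative hierarchy correctly.
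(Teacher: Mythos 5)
Your argument rests on the same key mechanism as the paper's proof: applying the guessing property to an arbitrary subset of some $X\in M$ forces the transitive collapse $N=\pi_M[M]$ to contain every subset of each of its elements, and that is what makes $N$ a rank initial segment. The packaging differs. The paper proves only the forward direction, by induction on $\beta\in M\cap\lambda$, showing directly that $\pi_M[M\cap V_\beta]=V_{\gamma_\beta}$; its successor step is exactly your closure-under-subsets argument localized to $X=V_\alpha$. You instead isolate ``$N$ is closed under subsets'' as a single application of guessing and then run a separate absoluteness argument to identify $N$ with $V_\gamma$; you also write out the converse direction, which the paper dismisses as ``proved by a similar argument.'' Your decomposition is clean, both reductions to $0$-guessing agree with the paper's, and the $(\Leftarrow)$ and closure-under-subsets computations are correct.

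One assertion in your final paragraph is false as stated and needs repair: ``transitivity gives $\rank(x)<\gamma$ for every $x\in N$.'' Transitivity (even together with closure under subsets) does not yield this: the set $\{\emptyset,\{\emptyset\},\{\{\emptyset\}\}\}$ is transitive and closed under subsets, its intersection with $\Ord$ is $2$, yet it contains an element of rank $2$. The inclusion $N\subseteq V_\gamma$ really comes from the elementarity you already have in hand: for limit $\lambda$ the sentence ``every set belongs to some $V_\alpha$'' holds in $V_\lambda$, hence in $N$, so each $x\in N$ lies in $V_\alpha^N$ for some $\alpha\in N\cap\Ord=\gamma$, and your induction has already identified $V_\alpha^N$ with the true $V_\alpha$. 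With that substitution the argument closes. (Both your proof and the paper's tacitly treat $\lambda$ as a limit; the case $M\prec V_{\lambda+1}$, which the paper later invokes for $I_1$, requires handling the top rank separately, but this is routine.)
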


\begin{proof}
We prove just one direction, the other one is proved by a similar argument.
Recall that $M\prec V_\lambda$ is an $\aleph_0$-guessing model iff it is a $0$-guessing model.
Now assume $M\prec V_\lambda$ is a $0$-guessing model.
We proceed by induction on $\beta\in M\cap\lambda$ to show that $M\cap V_\beta$ 
collapses to some $V_{\gamma_\beta}$ via $\pi_M\restriction V_\beta$. 
This is clear if $\beta$ is a limit ordinal since 
\[ 
\pi_M[V_\beta]=\bigcup_{\alpha<\beta}\pi_M[  
V_\alpha] = \bigcup_{\alpha<\beta} V_{\gamma_\alpha}=V_{\gamma_\beta}.
\]
If $\beta=\alpha+1$ then
\[
V_{\gamma_\beta}=P(V_{\gamma_\alpha})=P(\pi_M[V_\alpha]).
\] 
Thus for every $Y\in V_{\gamma_\beta}$, $Y=\pi_M [ X_Y ]$ for some $X_Y\in P ( M \cap V_\alpha )$. 
Now $M$ is a $0$-guessing model, $V_\alpha\in M$
and  every $X\in P(V_\alpha\cap M)$ is $0$-approximated. 
Thus we have that for every 
$Y$, $X_Y$ is $M$-guessed i.e. $X_Y=M\cap E_Y$ for some $E_Y\in M$. 
Clearly such an $E_Y\in V_{\alpha+1}$. 
Therefore
\[
V_{\gamma_\beta}=\{\pi_M[E_Y]: E_Y\in V_\beta\cap M\}=\pi_M [  V_\beta ],
\]
which is what we had to prove.
\end{proof}

Note that if $M\prec V_\lambda$ and $\pi_M[M]=V_\gamma$ then $j=\pi_M^{-1}$ 
is an elementary embedding of $V_\gamma$ into $V_\lambda$.
Thus Magidor's Theorem~\ref{thm.magspct} can be reformulated as follows:

\begin{theorem}
$\kappa$ is supercompact iff for every $\lambda\geq\kappa$ there is an 
$\aleph_0$-guessing model $M\prec V_\lambda$ with $\kappa_M = \kappa$, i.e.
$\forall \lambda \geq \kappa \exists \gamma \phi ( \kappa , \lambda ,\gamma ) $.
\end{theorem}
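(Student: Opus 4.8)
The plan is to combine Magidor's characterization (Theorem~\ref{thm.magspct}) with the classification of $\aleph_0$-guessing models (Lemma~\ref{lem.isotype0guesmod}) and the remark immediately preceding the statement, which observes that an $\aleph_0$-guessing model $M\prec V_\lambda$ with transitive collapse $V_\gamma$ yields an elementary embedding $j=\pi_M^{-1}\colon V_\gamma\to V_\lambda$. The whole argument is a translation between the ``embedding'' language of Theorem~\ref{thm.magspct} and the ``guessing model'' language of $\phi(\kappa,\lambda,\gamma)$, so the work lies entirely in checking that the quantity $j(\crit(j))$ matches $\kappa_M$ and that the embedding side and the model side produce each other.

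For the forward direction, I would assume $\kappa$ is supercompact, fix $\lambda\geq\kappa$, and apply Theorem~\ref{thm.magspct} to obtain a nontrivial elementary embedding $j\colon V_\gamma\to V_\lambda$ with $j(\crit(j))=\kappa$. Setting $M\coloneqq j[V_\gamma]=\rng(j)$, elementarity of $j$ gives $M\prec V_\lambda$, and $\pi_M=j^{-1}$ is exactly the transitive collapse of $M$ with collapse image $V_\gamma$. By Lemma~\ref{lem.isotype0guesmod}, since the transitive collapse of $M$ is a $V_\gamma$, the model $M$ is an $\aleph_0$-guessing model. It then remains to verify $\kappa_M=\kappa$: the critical point of $j$ is the least ordinal moved, which is precisely the least ordinal $\alpha$ with $M\cap\alpha\neq\alpha$ in the collapse picture, so $\kappa_M=j(\crit(j))=\kappa$. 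This establishes $\phi(\kappa,\lambda,\gamma)$ for suitable $\gamma$ (namely $\gamma=\lambda$ in general, or at least some $\gamma$ with the required order type), hence $\forall\lambda\geq\kappa\,\exists\gamma\,\phi(\kappa,\lambda,\gamma)$.

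For the reverse direction, I would assume $\forall\lambda\geq\kappa\,\exists\gamma\,\phi(\kappa,\lambda,\gamma)$ and aim to produce, for each $\lambda\geq\kappa$, an embedding as required by Theorem~\ref{thm.magspct}. Given $\lambda$, the hypothesis supplies an $\aleph_0$-guessing model $M\prec V_\lambda$ with $\kappa_M=\kappa$. By Lemma~\ref{lem.isotype0guesmod} the transitive collapse of $M$ is $V_\gamma$ for some $\gamma$, so by the remark $j\coloneqq\pi_M^{-1}\colon V_\gamma\to V_\lambda$ is elementary. The embedding is nontrivial because $\kappa_M=\kappa$ exists, meaning $M$ is not transitive; and as in the forward direction $j(\crit(j))=\kappa_M=\kappa$. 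Applying Theorem~\ref{thm.magspct} in the other direction concludes that $\kappa$ is supercompact.

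The only genuinely delicate point, and the step I expect to be the main obstacle, is the bookkeeping identity $\crit(\pi_M^{-1})=\kappa_M$ together with $\pi_M^{-1}(\crit(\pi_M^{-1}))=\kappa_M$: one must be careful that the least ordinal $\alpha\in M$ with $M\cap\alpha\neq\alpha$ corresponds under the collapse to the critical point of $j$, and that $j$ sends this critical point back up to $\kappa_M$ itself. This is a routine but easy-to-misstate verification, and everything else is a direct invocation of the two cited results.
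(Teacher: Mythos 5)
Your proposal is correct and follows exactly the route the paper takes: the paper presents this theorem as an immediate reformulation of Magidor's Theorem~\ref{thm.magspct} via Lemma~\ref{lem.isotype0guesmod} and the remark that $j=\pi_M^{-1}$ is elementary when $\pi_M[M]=V_\gamma$. Your verification that $\crit(j)=\otp(M\cap\kappa_M)=\pi_M(\kappa_M)$ and hence $j(\crit(j))=\kappa_M$ is the right bookkeeping (and the only point needing care), so the argument is sound as written.
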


\subsection{Rank initial segment embeddings}
The following is an immediate consequence of Magidor's observations:

\begin{fact}
$j\colon V_{\lambda+1}\rightarrow V_{\lambda+1}$ is elementary iff $j[V_{\lambda+1}]=M\prec V_{\lambda+1}$ is an $\aleph_0$-guessing model.
\end{fact}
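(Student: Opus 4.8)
The plan is to prove the two directions of the equivalence separately, using Lemma~\ref{lem.isotype0guesmod} as the main tool to translate between elementary embeddings of rank initial segments and $\aleph_0$-guessing models.

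For the forward direction, suppose $j\colon V_{\lambda+1}\to V_{\lambda+1}$ is a nontrivial elementary embedding, and set $M=j[V_{\lambda+1}]$. Since $j$ is elementary and injective, $M\prec V_{\lambda+1}$ and $j$ is an isomorphism between $V_{\lambda+1}$ and the structure $\langle M,\in\rangle$. Hence the transitive collapse $\pi_M\colon M\to N$ satisfies $\pi_M\circ j=\id_{V_{\lambda+1}}$, so $N=V_{\lambda+1}$ and $\pi_M=j^{-1}$. Thus the transitive collapse of $M$ is $V_{\lambda+1}$, a rank initial segment, and so by Lemma~\ref{lem.isotype0guesmod} $M$ is an $\aleph_0$-guessing model. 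Here the suitable initial segment is $\langle V_{\lambda+1},\in\rangle$ itself, so the lemma applies directly with $R=V_{\lambda+1}$.

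For the reverse direction, suppose $M\prec V_{\lambda+1}$ is an $\aleph_0$-guessing model. By Lemma~\ref{lem.isotype0guesmod} its transitive collapse is $V_\gamma$ for some $\gamma$. The point I would make explicit is that $\gamma=\lambda+1$: since $\pi_M$ is an isomorphism of $\langle M,\in\rangle$ onto $\langle V_\gamma,\in\rangle$ and elementarity transfers the first-order statement ``the class of ordinals has a largest element'' (equivalently, ``$V$ is of the form $V_{\beta+1}$ for a largest rank $\beta$''), the structure $V_\gamma$ must also satisfy this, forcing $\gamma$ to be a successor ordinal; moreover the top ranks match because the collapse fixes the ordinal height appropriately, yielding $\gamma=\lambda+1$. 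Then $j\coloneqq\pi_M^{-1}\colon V_{\lambda+1}\to M\subseteq V_{\lambda+1}$ is, by the remark immediately preceding the Fact, an elementary embedding of $V_{\lambda+1}$ into $V_{\lambda+1}$, which is exactly what we want.

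The main obstacle is pinning down that the collapse target is $V_{\lambda+1}$ and not some $V_\gamma$ with $\gamma<\lambda+1$; this is precisely the content needed to identify the codomain of $j$ as $V_{\lambda+1}$ rather than a proper initial segment. The key is that $\lambda+1$ is a successor, so $V_{\lambda+1}$ has a definable ``top level'' $V_\lambda$ of objects of maximal rank, and this feature is preserved under isomorphism of the two structures; a short elementarity argument about the order type of the ordinals below the top forces equality. Everything else—elementarity of $\pi_M^{-1}$ and the identification of $M$ with an $\aleph_0$-guessing model—is routine once Lemma~\ref{lem.isotype0guesmod} and the preceding remark are invoked.
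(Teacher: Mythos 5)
Your forward direction is correct and is exactly what the paper intends: $j$ is an $\in$-isomorphism of the transitive set $V_{\lambda+1}$ onto $M$, so $j^{-1}$ is the Mostowski collapse of $M$, the collapse target is $V_{\lambda+1}$, and Lemma~\ref{lem.isotype0guesmod} gives that $M$ is $\aleph_0$-guessing.

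The reverse direction has a genuine gap: your claim that the transitive collapse of an arbitrary $\aleph_0$-guessing $M\prec V_{\lambda+1}$ must be $V_{\lambda+1}$ is false. Lemma~\ref{lem.isotype0guesmod} only gives collapse to $V_\gamma$ where $\gamma=\otp(M\cap\Ord)=\otp(M\cap\lambda)+1$; your successor-ordinal observation correctly shows $\gamma$ is a successor, but the step ``the top ranks match because the collapse fixes the ordinal height appropriately'' is not an argument, and there is no reason for $\otp(M\cap\lambda)$ to equal $\lambda$. Indeed, Magidor's characterization of supercompactness (Theorem~\ref{thm.magspct}, applied at $\lambda+1$) already produces $\aleph_0$-guessing models $M\prec V_{\lambda+1}$ collapsing to $V_\gamma$ with $\gamma$ far below $\lambda+1$; if your argument were correct, every supercompact cardinal would yield an $I_1$ embedding. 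This is precisely why the paper, in the sentence immediately after the Fact, reformulates $I_1$ as the existence of an $\aleph_0$-guessing $M\prec V_{\lambda+1}$ with the \emph{additional} clause $\otp(M\cap\lambda)=\lambda$. To repair your reverse direction you must actually use the hypothesis that $M$ is presented as $j[V_{\lambda+1}]$ for a map $j$ defined on all of $V_{\lambda+1}$ and respecting $\in$ (so that $j^{-1}$ is the collapse of $M$ by uniqueness of the Mostowski collapse, forcing the collapse target to be $V_{\lambda+1}$ and making elementarity of $j$ immediate from $M\prec V_{\lambda+1}$), or equivalently add the order-type hypothesis; as written you have only proved the equivalence for guessing models satisfying a condition you did not establish.
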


Thus the existence of an $\aleph_0$-guessing model $M\prec V_{\lambda+1}$ 
such that $\otp(M\cap\lambda)=\lambda$ is an equivalent formulation of the axiom $I_1$.
In the same manner one can formulate $I_3$.

\subsection{Hugeness}
Recall that a cardinal $\kappa$ is $n$-huge in $V$ if there is $j\colon V\rightarrow M$ 
such that $\crit(j)=\kappa$, $M^{j^n(\kappa)}\subseteq M$ and $M\subseteq V$. 
An equivalent formulation is given by the following result~\cite[Theorem 22.8]{Kan09}

\begin{theorem}\label{thm.nhuge}
$\kappa$ is $n$-huge if and only if for some $\delta_n>\cdots>\delta_0=\kappa$  
there is a normal fine ultrafilter on 
$\{X\subseteq \delta_n:\forall 0<i\leq n \otp(X\cap\delta_i)=\delta_{i-1}\}$.
\end{theorem}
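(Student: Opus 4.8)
The plan is to establish the equivalence through the standard ultrapower dictionary between elementary embeddings and ultrafilters, the only nonroutine work being careful order-type bookkeeping. Throughout I would write $\delta_i = j^i(\kappa)$ and abbreviate $S = \{X\subseteq\delta_n : \forall\, 0<i\le n\ \otp(X\cap\delta_i)=\delta_{i-1}\}$, and treat the two implications separately. In both directions the substance is tracking how $j$ acts on the ordinals $\delta_i$ and pinning down the exact degree of closure of the target model.

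First I would assume $\kappa$ is $n$-huge, witnessed by $j\colon V\to M$ with $\crit(j)=\kappa$ and $M^{\delta_n}\subseteq M$. The seed is $s\coloneqq j[\delta_n]=\langle j(\alpha):\alpha<\delta_n\rangle$, which lies in $M$ since $M$ is closed under $\delta_n$-sequences of its own elements. Define $U$ on $S$ by $A\in U\iff s\in j(A)$. Fineness is immediate because $j(\alpha)\in s$ for every $\alpha<\delta_n$, and normality follows because any regressive choice function $f$ satisfies $j(f)(s)\in s$, hence $j(f)(s)=j(\alpha)$ for some $\alpha<\delta_n$, so $f$ is constant with value $\alpha$ on a $U$-set. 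The heart of this direction is the verification that $s\in j(S)$, i.e. that $U$ concentrates on $S$: since $j(\alpha)<j(\delta_i)$ iff $\alpha<\delta_i$, one has $s\cap j(\delta_i)=j[\delta_i]$, whose order type is $\delta_i=j(\delta_{i-1})$; as the defining clause of $j(S)$ at coordinate $i$ demands $\otp(s\cap j(\delta_i))=j(\delta_{i-1})$, this is exactly what is required. Finally $U$ inherits $\kappa$-completeness (indeed much more) from $\crit(j)=\kappa$.

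Conversely, given a normal fine ultrafilter $U$ on $S$, I would form $j=j_U\colon V\to M=\mathrm{Ult}(V,U)$, transitivized; well-foundedness comes from the $\kappa$-completeness of $U$, the standard normality/completeness fact for such ultrafilters. Fineness and normality together identify the identity class: the members of $[\mathrm{id}]_U$ are exactly $\{j(\alpha):\alpha<\delta_n\}=j[\delta_n]$, since fineness gives $j(\alpha)\in[\mathrm{id}]_U$ while normality forces every member $[g]_U$ with $g$ regressive a.e. to be some $j(\alpha)$. Applying {\L}o\'s to the defining clauses of $S$ and using $[\mathrm{id}]_U=j[\delta_n]$ yields, for each $0<i\le n$, that $\otp(j[\delta_i])=j(\delta_{i-1})$; since $\otp(j[\delta_i])=\delta_i$ absolutely, this gives $j(\delta_{i-1})=\delta_i$, whence $j^i(\kappa)=\delta_i$ by induction and $\crit(j)=\delta_0=\kappa$. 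It remains to obtain $M^{\delta_n}\subseteq M$. Here I would note that $j[\delta_n]=[\mathrm{id}]_U\in M$ has order type $\delta_n$, so its increasing enumeration---which, order type being absolute, is exactly $j\restriction\delta_n$---belongs to $M$. Then for any $\langle a_\xi:\xi<\delta_n\rangle$ with $a_\xi=[f_\xi]_U$, the class of $X\mapsto\langle f_\xi(X):\xi\in X\rangle$ is a function in $M$ with domain $j[\delta_n]$ sending $j(\xi)\mapsto a_\xi$ (by {\L}o\'s and fineness); precomposing with $j\restriction\delta_n\in M$ recovers $\langle a_\xi:\xi<\delta_n\rangle$ inside $M$.

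I expect the main obstacle to be the exact order-type accounting rather than the soft ultrapower machinery: one must see simultaneously that the seed $j[\delta_n]$ lands in the concentration set $j(S)$ with the coordinates matching up as $\otp(j[\delta_i])=j(\delta_{i-1})=\delta_i$, and that this same computation, read backwards through {\L}o\'s, delivers the \emph{precise} closure $M^{\delta_n}\subseteq M$---neither weaker nor stronger---which is exactly what characterizes $n$-hugeness. The subsidiary delicate point is the interplay between normality and completeness needed to guarantee well-foundedness of $\mathrm{Ult}(V,U)$ in the reverse direction; everything else ({\L}o\'s, absoluteness of order type, and the seed computation) is routine.
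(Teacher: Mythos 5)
The paper offers no proof of this statement to compare against: it is quoted directly from Kanamori (\emph{The Higher Infinite}, the theorem cited as 22.8) and used as a black box in Lemma 3.6. Judged on its own terms, your argument is the standard seed/derived-ultrafilter correspondence and is essentially right. In the forward direction the bookkeeping $s\cap j(\delta_i)=j[\delta_i]$ and $\otp(j[\delta_i])=\delta_i=j(\delta_{i-1})$ is exactly what is needed to see $s\in j(S)$, and the derived $U$ is indeed normal, fine and $\kappa$-complete. In the converse, the identification $[\mathrm{id}]_U=j[\delta_n]$ (fineness for $\supseteq$, normality for $\subseteq$), the {\L}o\'s computation giving $j(\delta_{i-1})=\delta_i$ via absoluteness of order type, and the recovery of $M^{\delta_n}\subseteq M$ from $j\restriction\delta_n\in M$ by precomposition are all correct and are the standard route.

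The one step you should not wave at is the clause ``well-foundedness comes from the $\kappa$-completeness of $U$, the standard normality/completeness fact for such ultrafilters.'' As the theorem is stated here, $\kappa$-completeness is \emph{not} among the hypotheses (Kanamori's own formulation adds ``$\kappa$-complete'' explicitly, and the paper's surrounding definitions treat normality and completeness as separate properties), and it does not fall out of normality plus fineness by any one-line argument: the obvious attempts require intersecting countably many fineness sets, which is circular. It can be derived in this setting --- normality of an ultrafilter gives closure under diagonal intersections, the maps $e_\alpha(X)=$ ``the $\alpha$-th element of $X\cap\delta_1$'' are choice functions on $S$ (using $\otp(X\cap\delta_1)=\kappa$) and hence constant a.e., and a bootstrap through a suitably indexed diagonal intersection then yields ${<}\kappa$-completeness --- but this is a genuine argument, not a citation. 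Note also that the same completeness is silently used a second time, to get $\crit(j_U)=\kappa$ rather than merely $\crit(j_U)\le\kappa$ (you need every $f\colon S\to\alpha$ with $\alpha<\kappa$ to be constant a.e.). Either supply that derivation or restate the theorem with $\kappa$-completeness as a hypothesis, as in the source; with that repaired, the proof is complete.
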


We can use guessing models to get a tight approximation of $n$-hugeness: 
we show how to pin hugeness in terms of the hierarchy defined using the formula $\phi$.

\begin{lemma}\label{lem.hugeGM}
The following holds:
\begin{enumerate}[label={\upshape (\arabic*)}]
\item \label{lem.hugeGM-1}
If $\kappa$ is huge then $\exists\lambda\phi(\kappa,\lambda,\kappa)$.
\item \label{lem.hugeGM-2}
If there exists $\lambda$ such that $\phi(\kappa,\lambda,\kappa+1)$, then
$\kappa$ is  a limit of huge cardinals.
\item \label{lem.hugeGM-3}
If there exists $\lambda$ such that $\phi(\kappa,\lambda,\kappa+ 2)$, then
$\kappa$ is huge and limit of huge cardinals.
\end{enumerate}
\end{lemma}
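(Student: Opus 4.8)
The common engine is the dictionary of Lemma~\ref{lem.isotype0guesmod}: an $\aleph_0$-guessing model $M\prec V_\lambda$ with $\kappa_M=\kappa$ has transitive collapse some $V_\gamma$, and $j_M=\pi_M^{-1}\colon V_\gamma\to V_\lambda$ is elementary with $\crit(j_M)=\mu$ and $j_M(\mu)=\kappa$, where $\mu=M\cap\kappa<\kappa$ and $\gamma=\otp(M\cap\lambda)$. Thus $\phi(\kappa,\lambda,\kappa+c)$ (for $c=1,2$) is exactly the assertion that such a $j_M$ exists with $\gamma\geq\kappa+c$; since $\gamma>\kappa$ this makes $\nu:=j_M(\kappa)$ defined (an inaccessible of $V_\lambda$ strictly above $\kappa$, so that $V_\lambda$ is in fact a tall segment), and it makes $j_M$ defined on all of $V_{\kappa+c}$. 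I will read hugeness off such embeddings through Theorem~\ref{thm.nhuge}: a normal fine ultrafilter on $[\delta]^\sigma=\{X\subseteq\delta:\otp(X)=\sigma\}$ is exactly what witnesses that $\sigma$ is huge with target $\delta$, and such an ultrafilter can be manufactured from any elementary $e$ defined on $V_{\delta+2}$ with $\crit(e)=\sigma$ and $e(\sigma)=\delta$, by setting $U=\{X\in P([\delta]^\sigma):e[\delta]\in e(X)\}$ with seed $e[\delta]$.

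For part~\ref{lem.hugeGM-1}, I would fix a huge embedding $j\colon V\to W$ with $\crit(j)=\kappa$, put $\lambda:=j(\kappa)$, and use $W^{\lambda}\subseteq W$. Let $N:=j[V_\lambda]$. Since $\lambda$ is inaccessible, $|V_\lambda|=\lambda$, so enumerating $V_\lambda$ in order type $\lambda$ and applying $j$ exhibits $N$ as the range of a $\lambda$-sequence of elements of $W$; hence $N\in W$. As $j\restriction V_\lambda\colon V_\lambda\to V_{j(\lambda)}^{W}$ is elementary with inverse the collapse of $N$, inside $W$ the model $N\prec V_{j(\lambda)}$ collapses to $V_\lambda$; by Lemma~\ref{lem.isotype0guesmod} applied in $W$, $N$ is $\aleph_0$-guessing there, with $\kappa_N=j(\kappa)=\lambda$ and $\otp(N\cap j(\lambda))=\otp(j[\lambda])=\lambda$. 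So $W\models\phi(\lambda,j(\lambda),\lambda)$, i.e. $W\models\chi(\lambda)$ where $\chi(x)$ abbreviates $\exists\lambda'\,\phi(x,\lambda',x)$. The point of packaging the order-type requirement into the self-referential $\chi$ is that the ``third slot'' is now tied to the first; since $\lambda=j(\kappa)$, elementarity of $j$ yields $V\models\chi(\kappa)$, i.e. $\exists\lambda'\,\phi(\kappa,\lambda',\kappa)$. (A naive reflection of ``$\otp\geq\kappa$'' would fail because $\kappa\notin\rng(j)$.)

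For part~\ref{lem.hugeGM-3}, with $c=2$ the embedding $j_M$ is defined on $V_{\kappa+2}\supseteq P([\kappa]^\mu)$, so the recipe above with $e=j_M\restriction V_{\kappa+2}$, $\sigma=\mu$, $\delta=\kappa$ produces a normal fine ultrafilter $U$ on $[\kappa]^\mu$; because $U$ measures every genuine subset of $[\kappa]^\mu$ (all of which lie in $V_{\kappa+2}=\dom(e)$), $U$ is a genuine such ultrafilter in $V$, so $\mu$ is huge in $V$. To promote this to $\kappa$ I would use elementarity of $j_M$: writing $T=\{\beta<\gamma:V_\gamma\models\beta\text{ huge}\}$, one has $j_M(T)=\{\beta<\lambda:V_\lambda\models\beta\text{ huge}\}$; since $V_\lambda$ is tall it contains and recognizes $U$, so $\mu\in j_M(T)$, and $\mu<\kappa=j_M(\mu)$ gives $\mu\in j_M(T)\cap\kappa=j_M(T\cap\mu)$. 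As $\crit(j_M)=\mu$, membership of the critical point $\mu$ in $j_M(T\cap\mu)$ forces $T\cap\mu$ to be unbounded in $\mu$; thus $V_\gamma\models$``$\mu$ is a limit of huge cardinals'', and transferring through $j_M(\mu)=\kappa$ and down to $V$ at the relevant ranks yields that $\kappa$ is a limit of huge cardinals. Hugeness of $\kappa$ itself comes by transferring $V_\gamma\models$``$\mu$ huge'' (witnessed at a rank below $\gamma$) across $j_M$ to $V_\lambda\models$``$\kappa$ huge'' and then reflecting the witness back to $V$.

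Part~\ref{lem.hugeGM-2} is the same analysis one level lower: with $c=1$ the embedding is only guaranteed on $V_{\kappa+1}$, which is just short of the $V_{\kappa+2}$ needed to read off an ultrafilter on $[\kappa]^\mu$, so one does not get that $\mu$ is outright huge; nevertheless the reflection of the previous paragraph, run on the class of huge cardinals, still forces them to be unbounded below $\kappa$, giving that $\kappa$ is a limit of huge cardinals. The delicate point throughout --- and the main obstacle --- is the rank bookkeeping in this last transfer: pinning down exactly how much of $j_M$ (equivalently, how large $\otp(M\cap\lambda)$) is needed for the hugeness witness for $\mu$ to live strictly below $\gamma$, so that elementarity of $j_M$ can carry it to $\kappa$. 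This is what separates the thresholds $\kappa+1$ and $\kappa+2$, and verifying it is where the argument must be done with care, using the inaccessibility of $\kappa$ and $\mu$ from Proposition~\ref{prop.basiguessing}\ref{prop.basiguessing-5} and the tallness of $V_\lambda$.
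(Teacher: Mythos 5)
Your part~\ref{lem.hugeGM-1} and your part~\ref{lem.hugeGM-3} follow essentially the paper's argument (reflect the guessing model $j[V_{j(\kappa)}]$ through $j$ for~\ref{lem.hugeGM-1}; build a seed ultrafilter $U$ from $j_M$ and then apply $j_M$ to $U$ itself for~\ref{lem.hugeGM-3}), and they are correct. The problem is part~\ref{lem.hugeGM-2}, where the rank bookkeeping --- which you rightly identify as the crux --- is off by one level, and this leaves~\ref{lem.hugeGM-2} without a proof. Since $\kappa=\kappa_M$ is regular (indeed inaccessible, by Proposition~\ref{prop.basiguessing}\ref{prop.basiguessing-5}), every $X\in[\kappa]^\mu$ has order type $\mu<\kappa$ and is therefore a \emph{bounded} subset of $\kappa$, hence an element of $V_\kappa$. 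So $[\kappa]^\mu\subseteq V_\kappa$ and $P([\kappa]^\mu)\subseteq V_{\kappa+1}$: the seed ultrafilter $U=\{Y\subseteq[\kappa]^\mu : j_M[\kappa]\in j_M(Y)\}$ already measures, in $V$, every subset of $[\kappa]^\mu$ as soon as $\gamma\geq\kappa+1$. (This is exactly the step where the paper observes that $A=\{X\subseteq\kappa:\otp(X)=\bar\kappa\}\subseteq V_\kappa$, using the regularity of $\kappa$ and of $j_M(\kappa)$.) Thus under the hypothesis of~\ref{lem.hugeGM-2} the cardinal $\mu$ \emph{is} outright huge, contrary to your claim. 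What the extra level in~\ref{lem.hugeGM-3} actually buys is different: it puts $U$ itself, an element of $V_{\kappa+2}$, inside $\dom(j_M)=V_\gamma$, so that $j_M(U)$ exists and witnesses in $V_\lambda$ (hence in $V$) that $\kappa$ is huge.

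Because of this miscount, your argument for~\ref{lem.hugeGM-2} is circular as written: the critical-point reflection you invoke needs $\mu\in j_M(T)$, i.e.\ it needs $V_\lambda$ to recognize $\mu$ as huge, which is precisely what you just declared unavailable at the $\kappa+1$ threshold. One cannot run the reflection ``on the class of huge cardinals'' without first exhibiting one below $\kappa$ that $V_\lambda$ can see. Once the rank count is corrected the gap closes at once: $\mu$ is huge, $U\in V_{\kappa+2}\subseteq V_\lambda$ so $V_\lambda$ recognizes this, and then either your reflection through $\crit(j_M)=\mu$, or the paper's simpler overspill (for every $\alpha\in M\cap\kappa=\mu$, $V_\lambda$ sees a huge cardinal in the interval $(\alpha,\kappa)$, namely $\mu$; pull this into $M$ by elementarity, conclude that $M$ models ``$\kappa$ is a limit of huge cardinals'', and push back to $V_\lambda$), yields that $\kappa$ is a limit of huge cardinals.
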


\begin{proof}
\ref{lem.hugeGM-1}
Assume $\kappa$ is huge. 
Let $j\colon V\rightarrow M$ witness it. 
Then $j[V_{j(\kappa)}]\in M$ 
and $M$ models that $j [ V_{j ( \kappa ) } ] \prec  j ( V_{ j ( \kappa ) } ) = ( V_{j^2 ( \kappa ) } )^M$.
Since $V_{j(\kappa)}=(V_{j(\kappa)})^M$ is the transitive collapse of $j[V_{j(\kappa)}]$ 
we get that, in $M$, $j[V_{j(\kappa)}]$ is an $\aleph_0$-guessing model and 
$\otp(j[V_{j(\kappa)}]\cap j^2(\kappa))=j(\kappa)$.
In particular $M$ models that there is an $\aleph_0$-guessing model 
$N\prec (V_{j^2(\kappa})^M$ such that $\otp(N\cap j^2(\kappa))=j(\kappa)$. 
By pulling down, we get that $V$ models that there is an $\aleph_0$-guessing model 
$N \prec V_{j ( \kappa )}$ such that $\otp ( N \cap  j( \kappa ) ) = \kappa$, i.e 
$\phi ( \kappa , j ( \kappa ) , \kappa )$ holds.

\medskip

\ref{lem.hugeGM-2}
Let $M$ be a witness of $\phi(\kappa,\lambda,\kappa+1)$, i.e.: 
\begin{itemize}
\item 
$M\prec V_\lambda$ is an $\aleph_0$-guessing model,
\item 
$\kappa_M=\kappa$,
\item 
$\otp(M\cap\lambda)=\gamma\geq\kappa+1$.
\end{itemize}

Let $\delta<\lambda$ in $M$ be such that $\otp(M\cap\delta)=\kappa_M$ and $\bar{\kappa}=\kappa\cap M$.

We first show that $\bar{\kappa}$ is huge:
Let $j=\pi_M^{-1}$. Then $j\colon V_\gamma\rightarrow V_{\lambda}$ is elementary and 
belongs to $V$, $j(\bar{\kappa})=\kappa$ and $j(\kappa)=\delta$. 
Now, since $\kappa=\kappa_M$, $\kappa$ is regular by 
Proposition~\ref{prop.basiguessing}\ref{prop.basiguessing-1} and, 
by elementarity of $j$, we also get that $\delta=j(\kappa)$ is regular.
For this reason 
 \[
 A=\{X\subseteq\kappa: \otp(X)=\bar{\kappa}\}\subseteq V_\kappa
 \] 
 and
 \[
 B=\{X\subseteq\delta: \otp(X)=\kappa\} \subseteq V_\delta.
 \] 
Now observe that $\otp(M\cap\lambda)=\gamma\geq\kappa+1$, thus $A\in V_{\kappa+1}\subseteq V_\gamma$. 
Thus $j(A)=B\in M$ and $M\cap \delta\in j(A)$.

Now we can define in $V$ a normal fine ultrafilter 
$\mathcal{U}\subseteq V_{\kappa+1}\subseteq V_\gamma=\dom ( j )$ by:
\[
E\in\mathcal{U} \iff M\cap \delta\in j(E).
\]
Then $\mathcal{U}\subseteq V_{\kappa+1}$ concentrates on 
$A$ and thus, by the above theorem, 
witnesses that $\bar{\kappa}$ is huge in $V$. 

Finally we show that $\kappa$ is a limit of huge cardinals:
Since $\bar{\kappa}=M\cap\kappa$ is huge, 
we get that:
\[
\forall \alpha\in M\cap\kappa \left ( V_\lambda \models \exists  
\bar{\kappa} ( \alpha < \bar{\kappa} < \kappa \wedge \bar{\kappa}\text{ is huge})\right ).
\] 
Since $M\prec V_\lambda$:
\[
 \forall \alpha\in M\cap\kappa \left ( M \models
\exists \bar{\kappa} ( \alpha < \bar{\kappa} < \kappa \wedge \bar{\kappa}\text{ is huge}) \right )
\]
Thus:
\[
M \models \forall \alpha<\kappa \exists  \bar{\kappa} 
\left ( \alpha < \bar{ \kappa } < \kappa  \wedge \bar\kappa  \text{ is huge}\right )
\] 
Since $M\prec V_\lambda$, we have that $V_\lambda$ models this assertion and we are done.

\medskip

\ref{lem.hugeGM-3}
The proof is exactly as in~\ref{lem.hugeGM-2},
just observe that $\gamma\geq\kappa+2$ 
and $\mathcal{U}\subseteq V_{\kappa+1}$, thus $\mathcal{U}\in V_\gamma$.
Then $j(\mathcal{U})$ is defined and in $V_\lambda$ witnesses that $\kappa$ is huge.
\end{proof}




Part~\ref{lem.hugeGM-2} of Lemma~\ref{lem.hugeGM} is not optimal.
For example with some more work we could show that 
$\phi(\kappa,\lambda,\kappa+1)$ implies that there is a normal measure on $\kappa$ 
which concentrates on huge cardinals.

To pin $n$-hugeness using guessing models we need a refinement of $\phi$. 
Let $\psi_n(\kappa,\lambda,\vec{\delta})$ asserts the existence of an 
$\aleph_0$-guessing model $M\prec V_\lambda$ such that if 
$\vec{\delta}=\{\delta_0<\cdots<\delta_n\}$ then:
\begin{itemize}
\item 
$\delta_0=\kappa_M$,
\item 
$\delta_n\leq\lambda$, 
\item 
$\delta_i\in M$ and $\otp(M\cap\delta_{i+1})=\delta_i$ for all $i<n$.
\end{itemize}
Then we can prove:

\begin{lemma}
The following holds:
\begin{enumerate}[label={\upshape (\arabic*)}]
\item 
If $\kappa$ is $n$-huge then $\exists\vec{\delta}
\psi_n (\kappa,\max\vec{\delta},\vec{\delta})$.
\item 
If there exists $\vec{\delta}$ such that $\psi_n(\kappa,\max\vec{\delta}+1,
\vec{\delta})$, then $\kappa$ is a limit of $n$-huge cardinals.
\item 
If there exists $\vec{\delta}$ such that $\psi_n(\kappa,\max\vec{\delta}+2,
\vec{\delta})$, then $\kappa$ is $n$-huge and is a limit of $n$-huge cardinals.
\end{enumerate}
\end{lemma}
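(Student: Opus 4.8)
The plan is to run all three parts in exact parallel with Lemma~\ref{lem.hugeGM} (the case $n=1$), using the ultrafilter characterization of $n$-hugeness from Theorem~\ref{thm.nhuge} in place of the $1$-huge one, and replacing the single critical pair $\bar\kappa<\kappa$ by the whole chain of iterated critical points. For part (1), fix $j\colon V\to M$ witnessing that $\kappa$ is $n$-huge, set $\delta_i=j^i(\kappa)$ and $\vec\delta=(\delta_0,\dots,\delta_n)$, and let $N=j[V_{\delta_n}]$. Since $M^{\delta_n}\subseteq M$ we have $N\in M$, and $M\models N\prec j(V_{\delta_n})=(V_{\delta_{n+1}})^M$ with transitive collapse $V_{\delta_n}$, so by Lemma~\ref{lem.isotype0guesmod} $N$ is $\aleph_0$-guessing in $M$. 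Because $j$ is order preserving and injective on the ordinals one computes $\kappa_N=j(\kappa)=\delta_1$, and for each $i<n$ both $\delta_{i+1}=j(\delta_i)\in N$ and $N\cap\delta_{i+2}=j[\delta_{i+1}]$, whence $\otp(N\cap\delta_{i+2})=\delta_{i+1}$. These are exactly the clauses of $\psi_n$ with every parameter replaced by its $j$-image, so $N$ witnesses in $M$ the $j$-image of $\psi_n(\kappa,\delta_n,\vec\delta)$; elementarity of $j$ then yields $V\models\psi_n(\kappa,\delta_n,\vec\delta)$, i.e. $\exists\vec\delta\,\psi_n(\kappa,\max\vec\delta,\vec\delta)$.

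For part (2), let $M\prec V_{\delta_n+1}$ witness $\psi_n(\kappa,\delta_n+1,\vec\delta)$, put $j=\pi_M^{-1}$ and $\bar\kappa=\kappa\cap M$. As $\delta_n$ is definable in $V_{\delta_n+1}$ as its largest ordinal, $\delta_n\in M$, so $\gamma\coloneqq\otp(M\cap\Ord)=\delta_{n-1}+1$ and $j\colon V_{\delta_{n-1}+1}\to V_{\delta_n+1}$ is an elementary embedding with $\crit(j)=\bar\kappa$; the order-type clauses of $\psi_n$ translate into $j^{\,i+1}(\bar\kappa)=\delta_i$ for $i\le n$. Writing $\bar\delta_i=j^i(\bar\kappa)$ (so $\bar\delta_n=\delta_{n-1}$) and $A=\{X\subseteq\bar\delta_n:\otp(X\cap\bar\delta_i)=\bar\delta_{i-1}\text{ for }0<i\le n\}$, regularity of each $\delta_i$ (Proposition~\ref{prop.basiguessing}\ref{prop.basiguessing-1} together with elementarity of $j$) forces every member of $A$ to be bounded, so $A\subseteq V_{\bar\delta_n}$ and $A\in V_{\bar\delta_n+1}=V_\gamma=\dom(j)$ --- this is exactly what the extra $+1$ buys. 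The seed $M\cap\delta_n=j[\bar\delta_n]$ lies in $j(A)$ precisely because $\otp(M\cap\delta_i)=\delta_{i-1}$ for each $0<i\le n$, so the derived measure $E\in\mathcal U\iff M\cap\delta_n\in j(E)$ is, by the standard derived-ultrafilter argument used for the $1$-huge case, a normal fine ultrafilter on $A$, and Theorem~\ref{thm.nhuge} certifies that $\bar\kappa$ is $n$-huge. The reflection step is then verbatim as in Lemma~\ref{lem.hugeGM}\ref{lem.hugeGM-2}: $\bar\kappa=\sup(M\cap\kappa)$ is an $n$-huge cardinal below $\kappa$, so $M\models\forall\alpha<\kappa\,\exists\nu\,(\alpha<\nu<\kappa\wedge\nu\text{ is }n\text{-huge})$, and elementarity lifts this to $V_{\delta_n+1}$, making $\kappa$ a limit of $n$-huge cardinals.

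For part (3), the larger ambient $V_{\delta_n+2}$ makes both $\delta_n$ and $\delta_n+1$ definable, hence members of $M$, so now $\gamma=\delta_{n-1}+2$. Since the ultrafilter $\mathcal U$ of part (2) satisfies $\mathcal U\subseteq V_{\bar\delta_n+1}$, we get $\mathcal U\in V_{\delta_{n-1}+2}=V_\gamma=\dom(j)$, so $j(\mathcal U)$ is defined; by elementarity $j(\mathcal U)$ is a normal fine ultrafilter on $\{X\subseteq\delta_n:\otp(X\cap\delta_i)=\delta_{i-1}\text{ for }0<i\le n\}$, whose least parameter is $j(\bar\kappa)=\kappa$, so Theorem~\ref{thm.nhuge} now gives that $\kappa$ itself is $n$-huge. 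That $\kappa$ is moreover a limit of $n$-huge cardinals follows from part (2) applied to $M\cap V_{\delta_n+1}$, which is an $\aleph_0$-guessing witness of $\psi_n(\kappa,\delta_n+1,\vec\delta)$: its transitive collapse is $V_{\delta_{n-1}+1}$, its critical ordinal is still $\kappa$, and the $\delta_i$ and order-type clauses survive the restriction.

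The main obstacle, exactly as in the $1$-huge case, is the rank-and-order-type bookkeeping: one must check that the increments $+1$ and $+2$ on $\max\vec\delta$ are precisely what is needed to place the target set $A$, respectively the ultrafilter $\mathcal U$, inside $\dom(j)=V_\gamma$, and that the nested conditions $\otp(M\cap\delta_{i+1})=\delta_i$ built into $\psi_n$ are exactly what drives the seed $M\cap\delta_n$ into the correct $n$-huge concentration set $j(A)$. Once this is in place, the normality, fineness and completeness of the derived ultrafilter are obtained by the same standard computation used for $n=1$, and the reflection arguments for ``limit of $n$-huge'' carry over unchanged.
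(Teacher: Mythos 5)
Your proof is correct and is exactly the argument the paper intends: the paper leaves this lemma to the reader as the direct generalization of Lemma~\ref{lem.hugeGM}, and you carry out that generalization faithfully, with the right bookkeeping ($\gamma=\delta_{n-1}+1$ resp.\ $\delta_{n-1}+2$, the seed $M\cap\delta_n=j[\bar\delta_n]$ landing in $j(A)$ via the nested order-type clauses of $\psi_n$, and the ultrafilter entering $\dom(j)$ only with the extra $+1$). No gaps beyond those already present at the same level of detail in the paper's own proof of the $1$-huge case.
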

We leave the proof of the lemma to the interested reader.
 
\section{Internal closure of guessing models} \label{sect.internalclosure}

In this and in the next section, we come back to an analysis of the properties of 
guessing models and we also address some consistency issues regarding their existence.

If $M\prec V_\lambda$ is an $\aleph_0$-guessing model, $\kappa_M$ is inaccessible and 
$P_{\gamma}M\subseteq M$ for all $\gamma\in M\cap\kappa_M$. Such a degree of closure 
cannot be achieved for $\aleph_1$-guessing models which are not $\aleph_0$-guessing, 
however we can prove that such models have a reasonable degree of closure in most cases.
To this aim we need to recall the following definitions:

\begin{definition}\label{def.intclos}
Let $\mathfrak{R}$ be a suitable initial segment.
For a model $M\prec R$ and a cardinal $\delta \leq \kappa _M$, we say that:
\begin{itemize} 
\item 
\( M \) is $\delta$-internally unbounded if $M \cap P_{\delta}( M ) $ is cofinal in  $P_{\delta}( M )$,
\item 
\( M \) is $\delta$-internally club if $M\cap P_{\delta} M$ is a club subset of $P_{\delta} M$,
\item 
\( M \) is $\delta$-internally stationary if $M\cap P_{\delta} M$ is a stationary subset of $P_{\delta} M$.
\end{itemize}
We let $\mathrm{IC}^\delta (\mathfrak{R})$ be the set of  $M\prec R$ which are $\delta$-internally club, 
$\mathrm{IS}^\delta ( \mathfrak{R})$ be the set of  $M\prec R$ which are $\delta$-internally stationary and 
$\mathrm{IU}^\delta ( \mathfrak{R} )$ be the set of  $M\prec R$ which are $\delta$-internally unbounded.
\end{definition} 

Recall that the pseudo-intersection number $\mathfrak{p}$ is the minimal size of a family 
$X\subseteq P(\omega)$ which is closed under finite intersections and for which there is no 
infinite $a\subseteq\omega$ such that $a\subseteq^* b$ (i.e. $a\setminus b$ is finite) for all $b\in X$. 
We show the following:

\begin{lemma} \label{lem.pgeqkappaIUGM}
Assume $M\prec R$ for a suitable initial segment $\mathfrak{R}$ is an $\aleph_1$-guessing 
model such that $\mathfrak{p}>|M|$. 
Then $M$ is in $\mathrm {IU}^{\aleph_1} R$.
\end{lemma}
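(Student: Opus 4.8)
The plan is to prove the equivalent assertion that every countable $a=\{x_n:n<\omega\}\subseteq M$ (with the $x_n$ distinct) is contained in some countable $b\in M$. Since $M\cap\Ord$ is closed under countable suprema by Proposition~\ref{prop.basiguessing}\ref{prop.basiguessing-6}, a routine rank computation produces $X\in M$ with $a\subseteq X$, so I may apply the guessing property to subsets of $X$. For a countable $c\in M$ write $s_c=\{n<\omega:x_n\in c\}\subseteq\omega$, and note $s_{c}\cup s_{c'}=s_{c\cup c'}$. If some countable $c\in M$ has $s_c$ cofinite, then $a\setminus c$ is a finite subset of $M$, hence $a\setminus c\in M$ and $b=c\cup(a\setminus c)\in M$ is a countable cover of $a$. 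The whole problem therefore reduces to refuting the bad case in which $s_c$ is co-infinite for \emph{every} countable $c\in M$.

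Before that, I would record that the hypotheses force $\aleph_1<\kappa_M$. Indeed, if $\kappa_M=\aleph_1$ then $\delta_M:=M\cap\omega_1$ is a countable limit ordinal, and every countable $Z\in M$ has $Z\cap\omega_1$ bounded below $\delta_M$; hence any increasing $\omega$-sequence cofinal in $\delta_M$ meets each such $Z$ in a finite set and is therefore $(\aleph_1,M)$-approximated as a subset of $\omega_1\in M$. There are $2^{\aleph_0}$ such sequences, but a guess $e\in M\cap P(\omega_1)$ can only produce the single candidate $e\cap M=e\cap\delta_M$, so at most $|M|$ of them are $M$-guessed. As $|M|<\mathfrak p\le 2^{\aleph_0}$, some cofinal $\omega$-sequence is approximated but not guessed, contradicting that $M$ is $\aleph_1$-guessing. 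Thus $\aleph_1<\kappa_M$, and in particular the collapse $\pi_M$ fixes $\omega_1$ together with all smaller ordinals.

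In the bad case the family $\{\omega\setminus s_c:c\in M\text{ countable}\}$ is closed under finite intersections with all members infinite and has size $\le|M|<\mathfrak p$, so by the definition of $\mathfrak p$ it has an infinite pseudo-intersection $a^*\subseteq\omega$; thus $a^*\cap s_c$ is finite for every countable $c\in M$. Set $a_\infty=\{x_n:n\in a^*\}\subseteq a\subseteq X$. For any countable $Z\in M$ one has $a_\infty\cap Z=\{x_n:n\in a^*\cap s_Z\}$, a finite subset of $M$ and so an element of $M$; hence $a_\infty$ is $(\aleph_1,M)$-approximated, and as $M$ is $\aleph_1$-guessing and $a_\infty\subseteq M$ there is $e\in M\cap P(X)$ with $e\cap M=a_\infty$. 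Now $\pi_M(e)=\pi_M[e\cap M]=\pi_M[a_\infty]$ is genuinely countable; if $M\models|e|\ge\aleph_1$, then applying $\pi_M$ to an injection $\omega_1\to e$ belonging to $M$ and using $\pi_M(\omega_1)=\omega_1$ would yield a genuine injection of $\omega_1$ into the countable set $\pi_M[a_\infty]$, which is absurd. Hence $M\models|e|\le\aleph_0$, so $e\subseteq M$ and $e=a_\infty\in M$. But then $s_e=a^*$, whence $a^*\cap s_e=a^*$ is infinite, contradicting the pseudo-intersection property. This contradiction eliminates the bad case and finishes the proof.

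The step I expect to be the main obstacle is controlling the \emph{size} of the guessed set $e$: the guessing property only delivers some $e\in M$ with $e\cap M=a_\infty$ and gives no a priori bound on $|e|$. Forcing $e$ to be countable is exactly what makes the argument close, and it is achieved by the two ingredients above --- the counting argument (using $|M|<\mathfrak p\le 2^{\aleph_0}$) that rules out $\kappa_M=\aleph_1$, and the transitive-collapse computation, which converts $\aleph_1<\kappa_M$ into genuine countability of $e$.
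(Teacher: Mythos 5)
Your proof is correct and follows essentially the same route as the paper: use $|M|<\mathfrak p$ to extract an infinite pseudo-intersection of the family of differences with countable members of $M$, observe it is $(\aleph_1,M)$-approximated, and then show the guessed set $e\in M$ must be countable, hence equal to the pseudo-intersection, contradicting the finiteness of its traces. The only real difference is bookkeeping: you justify $\omega_1\subseteq M$ by a counting argument, whereas the paper takes it for granted (it also follows directly from Proposition~\ref{prop.basiguessing}\ref{prop.basiguessing-6}, since a guessing model is closed under countable suprema and so cannot have $M\cap\omega_1$ a countable limit ordinal).
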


\begin{proof}   Assume not and pick $M\prec R$ guessing model witnessing it. 
The family $\{x\setminus z: z\in M\cap P_{\omega_1} M\}$ has the finite intersection 
property and has size at most $|M|<\mathfrak{p}$. 
Thus there is $y\subseteq x$ such that $y\cap z$ is finite for all countable $z\in M$. 
Thus $y$ is $M$-approximated. Let $d\in M$ be such that $d\cap M=y$. 
Then $d$ is countable, else, since $d\in M$ and $\omega_1\subseteq M$, $d\cap M$ 
is uncountable and thus different from $y$. 
This means that $d=d\cap M=y$. 
This is impossible since $d\cap y$ is finite by choice of  $y$.
\end{proof}

\begin{remark}
With more work we could first prove the same conclusion of the above lemma replacing the 
assumption  ``$|M|<\mathfrak{p}$'' with ``$\SCH$ holds and $\kappa_M\leq\mathfrak{p}$''. 
By Theorem~\ref{the.GMSCH}, $\SCH$ would be redundant if the set of $\aleph_1$-guessing model which are internally unbounded is stationary, so we could reformulate the Lemma as follows:

``Assume $\kappa_M\leq\mathfrak{p}$ and there are stationarily many $\aleph_1$-guessing model $M\prec H_\theta$ which are $\aleph_1$-internally unbounded. Then any guessing $M\prec H_\theta$ is 
$\aleph_1$-internally unbounded.'' 

It is open whether there can be an $\aleph_1$-guessing model $M$ in a 
universe of sets where $\mathfrak{p}<\kappa_M$ and also if there can be an $\aleph_1$-guessing model $M$ which is not $\aleph_1$-internally unbounded.
\end{remark}


The following theorem will be used in Sections~\ref{sect.isotypesGM} and~\ref{sect.laverdiamond}.

\begin{theorem} \label{the.PFAIUGM}
Assume $\MM$. Then for evey regular $\theta\geq\aleph_2$ the following sets are stationary:
\begin{enumerate}[label={\upshape (\arabic*)}]
\item \label{the.PFAIUGM-1}
the set of $\aleph_1$-guessing models $M\prec H_\theta$ of size $\aleph_1$ 
which are $\aleph_1$-internally club (for this result $\PFA$ suffices),
\item \label{the.PFAIUGM-2}
the set of $\aleph_1$-guessing models $M\prec H_\theta$ of size $\aleph_1$ 
which are $\aleph_1$-internally unbounded but not  $\aleph_1$-internally stationary,
\item \label{the.PFAIUGM-3}
the set of $\aleph_1$-guessing models $M\prec H_\theta$ of size $\aleph_1$ 
which are $\aleph_1$-internally stationary but not  $\aleph_1$-internally club.
\end{enumerate}
\end{theorem}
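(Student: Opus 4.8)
The plan is to reduce all three statements to Woodin's reformulation of forcing axioms (Lemma~\ref{lem.MMWoo}): for a class $\Gamma$, if $\mathbb{P}\in\Gamma$ and $\theta$ is large regular, then there are stationarily many $M\prec H_\theta$ of size $\aleph_1$ carrying an $M$-generic filter for $\mathbb{P}$. Thus for each item it suffices to exhibit a single forcing $\mathbb{P}$ --- proper for~\ref{the.PFAIUGM-1}, merely stationary set preserving for~\ref{the.PFAIUGM-2} and~\ref{the.PFAIUGM-3} --- together with an argument showing that \emph{any} $M\prec H_\theta$ of size $\aleph_1$ with $\mathbb{P}\in M$ admitting an $M$-generic filter is automatically an $\aleph_1$-guessing model of the prescribed internal type. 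Stationarity of the three sets then follows from Lemma~\ref{lem.MMWoo}, and the split in hypotheses ($\PFA$ for~\ref{the.PFAIUGM-1}, $\MM$ for the rest) is explained by the fact that the forcings needed to destroy internal stationarity or internal clubness cannot be proper.

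For~\ref{the.PFAIUGM-1} I would use the side-condition forcing $\mathbb{P}$ whose conditions are countable continuous $\in$-increasing sequences $\langle N_\xi:\xi\le\eta\rangle$ of countable elementary submodels of $H_\theta$ with $\langle N_\zeta:\zeta\le\xi\rangle\in N_{\xi+1}$, ordered by end-extension, enriched by a component that commits the generic model to $M$-guesses for approximated sets. Properness is the usual side-condition argument. Given an $M$-generic $G$, the dense sets of conditions whose top model contains a prescribed $x\in M$ force the union of the generic chain to equal $M$, while continuity together with $N_\xi\in M$ makes $\{N_\xi:\xi<\omega_1\}$ a continuous cofinal chain of elements of $M$ inside $P_{\aleph_1}(M)$; hence $M$ is $\aleph_1$-internally club in the sense of Definition~\ref{def.intclos}. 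The guessing property is the technical heart: it is obtained by arranging $\mathbb{P}$ to have the $\aleph_1$-approximation property, so that an $(\aleph_1,M)$-approximated subset of some $X\in M$ is forced to be $M$-guessed; equivalently one reads off guessing from the fact that the transitive collapse of the generic $M$ sits inside $H_\theta$ with the approximation property, exactly as in the consistency results of~\cite{VIAWEI10,weiss}.

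For~\ref{the.PFAIUGM-2} and~\ref{the.PFAIUGM-3} I would retain the guessing component but relax the continuity constraint on the chain of side conditions so as to tune the internal closure of the resulting $M$. To obtain a model that is $\aleph_1$-internally unbounded but not $\aleph_1$-internally stationary, I would require the generic chain to ``skip'' a non-reflecting pattern of levels that is stationary in $P_{\aleph_1}(M)$, so that $M\cap P_{\aleph_1}(M)$ stays cofinal yet is disjoint from a club; dually, for~\ref{the.PFAIUGM-3} I would force $M\cap P_{\aleph_1}(M)$ to be stationary while omitting a cofinal continuous subchain. These modifications mirror the usual separation of internally club, internally stationary, and internally unbounded models, but are here coupled with the guessing requirement. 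The main obstacle --- and the reason $\MM$ rather than $\PFA$ is invoked --- is that these tuned forcings are no longer proper: one must verify that they remain stationary set preserving while still carrying the $\aleph_1$-approximation property that forces guessing, and that the imposed cofinality and continuity pattern yields exactly the non-stationarity (respectively non-clubness) of $M\cap P_{\aleph_1}(M)$, rather than collapsing it below unboundedness (respectively below stationarity). Establishing stationary set preservation and the precise internal type simultaneously is where the bulk of the work lies.
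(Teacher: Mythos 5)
Your overall architecture is the one the paper uses: reduce to Woodin's reformulation of forcing axioms (Lemma~\ref{lem.MMWoo}), then exhibit, for each item, a poset with the $\omega_1$-covering and $\omega_1$-approximation properties such that any $M$ of size $\aleph_1$ admitting an $M$-generic filter is an $\aleph_1$-guessing model of the prescribed internal type; properness for~\ref{the.PFAIUGM-1} and mere stationary set preservation for~\ref{the.PFAIUGM-2} and~\ref{the.PFAIUGM-3} explain the split between $\PFA$ and $\MM$. For item~\ref{the.PFAIUGM-1} your side-condition chain is essentially Krueger's poset $\mathbb{R}_0$ of continuous maps into $(P_{\omega_1}X)^V$, and black-boxing the guessing part via the approximation property is legitimate, with one caveat: the approximation property of $\mathbb{P}$ alone is not quite enough. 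The paper invokes \cite[Lemma 4.6]{VIAWEI10} (Lemma~\ref{lem.ICGM}), which additionally requires $\mathbb{P}$ to collapse $2^\lambda$ to $\aleph_1$ and then appends a further ccc poset $\dot{\mathbb{Q}}_{\mathbb{P}}$; the guessing property is read off the generic filter for the two-step iteration, not for $\mathbb{P}$ alone.

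The genuine gap is in items~\ref{the.PFAIUGM-2} and~\ref{the.PFAIUGM-3}. When $|M|=\aleph_1$, the club filter on $P_{\omega_1}(M)$ is generated by the restrictions of any single continuous cofinal chain $\langle N_\xi:\xi<\omega_1\rangle$ to clubs of $\omega_1$, so failure of internal stationarity (resp.\ clubness) amounts to $\{\xi: N_\xi\in M\}$ being non-stationary (resp.\ not containing a club). Your proposal to have the generic chain ``skip a non-reflecting pattern of levels'' cannot achieve this: omitting levels from a continuous chain does not change which countable subsets of $M$ belong to $M$, and as long as every $N_\xi$ is constructed from data available in $M$ (as in your side-condition poset), $M$-genericity will put club-many $N_\xi$ inside $M$. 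What is needed, and what you do not supply, is a source of elements of $P_{\omega_1}(M)$ that lie \emph{outside} $M$: the paper first forces with Cohen forcing $\mathbb{C}$ to add new countable subsets of $H_\lambda$, and then (following Krueger,~\cite{krueger.IA} and~\cite{krueger.internally_club}) threads a continuous cofinal chain whose values lie in $V$ exactly on a prescribed stationary, co-stationary $E_0\subseteq\omega_1$ (or entirely outside $V$), so that $\{\xi:N_\xi\in M\}$ has exactly the required pattern while unboundedness (resp.\ stationarity) of $M\cap P_{\omega_1}(M)$ is preserved. Also, ``non-reflecting pattern'' is not the relevant notion here; the control is by a partition of $\omega_1$ into stationary sets. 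Without this preliminary forcing and the interleaving device, the constructions you sketch for~\ref{the.PFAIUGM-2} and~\ref{the.PFAIUGM-3} do not produce models of the stated internal types.
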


We shall just sketch its proof since it is a straightforward consequence of the combination of results 
of~\cite{VIAWEI10} with a series of results appearing in Krueger's papers~\cite{krueger.IA} 
and~\cite{krueger.internally_club}. The interested reader will find all the missing details in the relevant papers. 
We shall in any case give at any stage of the proof a careful reference to the parts of these papers where the key arguments are presented.
 
We begin stating the following result~\cite[Lemma 4.6]{VIAWEI10}:
\begin{lemma}\label{lem.ICGM}
Assume $\lambda\geq\aleph_2$ is regular and 
$\mathbb{P}$ is a poset with the $\omega_1$-approximation and $\omega_1$-covering 
properties which collapses $2^\lambda$ to $\aleph_1$.
Then there is in $V^{\mathbb{P}}$ a ccc-poset $\dot{\mathbb{Q}}_{\mathbb{P}}$ with the following property:
\begin{quote}
If there is an $M$-generic filter for  $\mathbb{P}*\dot{\mathbb{Q}}_{\mathbb{P}}$,
where \( M \in V \) and $M\prec (H_\theta)^V$,
then $M\cap H_\lambda\prec H_\lambda$ is an $\aleph_1$-guessing model.
\end{quote}
\end{lemma}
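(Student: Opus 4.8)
The plan is to build the ccc poset $\dot{\mathbb Q}_{\mathbb P}$ so that, working below an $M$-generic filter, every $(\aleph_1,M\cap H_\lambda)$-approximated set becomes visible inside a generic extension of the transitive collapse of $M$, whereupon the $\omega_1$-approximation property of $\mathbb P$ returns it to the collapse itself. Throughout I take, as the intended applications provide, $|M|=\aleph_1$ and $\omega_1\subseteq M$, so that $\pi_M$ fixes $\omega_1$ and ``countable'' is absolute between $V$, $M$ and the relevant collapses; that $M\cap H_\lambda\prec H_\lambda$ is standard from $M\prec H_\theta$ and $H_\lambda\in M$. Fix a $(\aleph_1,M\cap H_\lambda)$-approximated $d\subseteq X\in M\cap H_\lambda$; via a bijection in $M$ I may assume $d\subseteq\lambda$. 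Let $\pi=\pi_M\colon M\to N$ be the collapse and $g$ the hypothesised $M$-generic filter for $\mathbb P*\dot{\mathbb Q}_{\mathbb P}$. Then $\bar g=\pi[g]$ meets every dense subset of $\pi(\mathbb P*\dot{\mathbb Q}_{\mathbb P})$ lying in $N$; as $N$ is transitive, $\bar g$ is a genuine $N$-generic filter and $N[\bar g]$ a transitive model. Writing $\bar{\mathbb P}=\pi(\mathbb P)$ and $\bar g=\bar g_0*\bar g_1$, elementarity gives $N\models$``$\bar{\mathbb P}$ has the $\omega_1$-approximation and $\omega_1$-covering properties'', and since $\bar g_0$ is truly $N$-generic and covering forbids collapsing $\omega_1$, the pair $(N,N[\bar g_0])$ really has both. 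Setting $\bar d=\pi[d\cap M]\subseteq\bar\lambda$ with $\bar\lambda=\otp(M\cap\lambda)$: for countable $\bar Z\in N$ the preimage $Z=\pi^{-1}[\bar Z]\in M$ is countable, so $d\cap Z\in M$ by approximation, whence $\bar d\cap\bar Z=\pi[d\cap Z]\in N$; thus $\bar d$ is $\omega_1$-approximated over $N$.

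The design of $\dot{\mathbb Q}_{\mathbb P}$ is the heart of the matter. Because $\mathbb P$ collapses $2^\lambda$ to $\aleph_1$, in $V^{\mathbb P}$ one has $|H_\lambda^V|=|P_{\omega_1}(H_\lambda^V)|=\aleph_1$; having only $\aleph_1$-many countable ground-model sets to anticipate is exactly what permits a finite-condition ccc poset $\dot{\mathbb Q}_{\mathbb P}$ that generically threads $P_{\omega_1}(H_\lambda^V)$ by an increasing continuous chain of countable members. The point of such a generic is that, pulled back through the lift $(j^+)=(\pi^{-1})^+\colon N[\bar g]\to H_\theta^{V[G]}$, it renders $M[g]=j^+[N[\bar g]]$ $\aleph_1$-internally unbounded with $M[g]\cap\lambda=M\cap\lambda$, expressing $M\cap\lambda$ as $\bigcup_{\alpha<\omega_1}Z_\alpha$ for countable $Z_\alpha\in M$. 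The decoding furnished by $\bar g_1$ then assembles, inside $N[\bar g]$, the collapsed approximations $\langle\bar d\cap\bar Z_\alpha:\alpha<\omega_1\rangle$ (each a member of $N$) into the single set $\bar d=\bigcup_{\alpha<\omega_1}(\bar d\cap\bar Z_\alpha)$. Hence $\bar d\in N[\bar g]$.

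It remains to close the loop. As $\mathbb P$ has the $\omega_1$-approximation property and $\dot{\mathbb Q}_{\mathbb P}$ is chosen to preserve it, $(N,N[\bar g])$ has the $\omega_1$-approximation property; combined with $\bar d\in N[\bar g]$ and the $\omega_1$-approximatedness of $\bar d$ over $N$, this forces $\bar d\in N$. Put $e=\pi^{-1}(\bar d)\in M\cap P(\lambda)$; then $\pi(e)=\pi[e\cap M]=\bar d=\pi[d\cap M]$, so $e\cap M=d\cap M$, i.e.\ $d$ is $M$-guessed. Since $X\in M\cap H_\lambda$ and the approximated $d$ were arbitrary, $M\cap H_\lambda$ is an $\aleph_1$-guessing model.

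I expect the main obstacle to be the second paragraph, specifically the two features I have extracted from $\dot{\mathbb Q}_{\mathbb P}$ at once: that the chain-threading poset is genuinely ccc --- which needs the collapse of $2^\lambda$ to bring $P_{\omega_1}(H_\lambda^V)$ down to size $\aleph_1$ --- and, more delicately, that a single $M$-generic filter reconstructs $\bar d$ in $N[\bar g]$ \emph{uniformly} over all approximated $d$ while the composite $\mathbb P*\dot{\mathbb Q}_{\mathbb P}$ still retains the $\omega_1$-approximation property (otherwise the reconstruction would fail to return $\bar d$ to $N$). The tension between ``$\bar d$ reaches $N[\bar g]$'' and ``approximation forbids fresh approximated sets'' is exactly what the construction must reconcile, and this is the point at which I would appeal to the preservation lemmas of~\cite{VIAWEI10} together with Krueger's analysis of internally club and internally approachable models in~\cite{krueger.IA,krueger.internally_club}, which also secure the internal unboundedness of $M[g]$ used above.
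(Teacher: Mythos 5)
The paper itself gives no proof of this lemma --- it is imported verbatim as~\cite[Lemma 4.6]{VIAWEI10} --- so your argument has to stand on its own. Your first and third paragraphs do: they are the correct (and standard) outer reduction. Collapsing $M$ to a transitive $N$, checking that $\bar{g}=\pi[g]$ is genuinely $N$-generic, that $(N,N[\bar{g}])$ inherits the $\omega_1$-approximation and covering properties, that $\bar{d}=\pi[d\cap M]$ is $\omega_1$-approximated over $N$, and that $\bar{d}\in N$ pulls back to $d$ being $M$-guessed --- all of this is sound (granting $\omega_1\subseteq M$, which the applications do provide). The genuine gap is your second paragraph, which is the entire content of the lemma: you never define $\dot{\mathbb{Q}}_{\mathbb{P}}$, never verify that it is ccc, never verify that $\mathbb{P}*\dot{\mathbb{Q}}_{\mathbb{P}}$ retains the $\omega_1$-approximation property over $V$ (hence over $N$), and the pivotal assertion ``hence $\bar{d}\in N[\bar{g}]$'' does not follow from what precedes it. Knowing that each piece $\bar{d}\cap\bar{Z}_\alpha$ lies in $N$ does not place $\bigcup_{\alpha<\omega_1}(\bar{d}\cap\bar{Z}_\alpha)$ in $N[\bar{g}]$: for that you need the \emph{function} $\alpha\mapsto\bar{d}\cap\bar{Z}_\alpha$ to be an element of $N[\bar{g}]$, computable there from $\bar{g}$ and parameters in $N$, and this must work \emph{uniformly} for all of the up to $2^\lambda$ many approximated sets $d$, none of which is a parameter of the poset. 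A generic chain threading $P_{\omega_1}(H_\lambda^V)$ carries no information about any particular $d$; it only yields internal unboundedness of $M[g]$, which gives you nothing beyond the fact (already guaranteed by approximatedness) that each piece $d\cap Z_\alpha$ lies in $M$.

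The actual role of the hypothesis that $\mathbb{P}$ collapses $2^\lambda$ to $\aleph_1$ is to allow an enumeration of $P(\lambda)^V$ (indeed of $H_{(2^\lambda)^+}^V$) in order type $\omega_1$ in $V^{\mathbb{P}}$, and $\dot{\mathbb{Q}}_{\mathbb{P}}$ must be built so that its generic codes, for every set in that enumeration simultaneously, a thread of its countable approximations in a form that the collapsed model $N[\bar{g}]$ can decode --- while remaining ccc and not destroying the approximation property of the iteration. Designing and verifying such a poset is the whole difficulty, and deferring it to the ``preservation lemmas of~\cite{VIAWEI10}'' amounts to citing the result you set out to prove. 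As a secondary point, a finite-condition poset adding an increasing \emph{continuous} cofinal chain cannot keep all of its members in $V$ (unions at limit stages of cofinally generic chains of $V$-sets need not lie in $V$), so even the device you do describe delivers less than you claim for it.
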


Krueger in~\cite{krueger.IA} and~\cite{krueger.internally_club} 
essentially showed the following:
\begin{theorem}[Krueger]\label{prop.ICGM}
There are posets $\mathbb{P}_i=\mathbb{C}*\dot{\mathbb{R}}_i$ for $i<3$ satisfying the hypothesis of Lemma~\ref{lem.ICGM} such that:
\begin{enumerate}[label={\upshape (\arabic*)}]
\item \label{prop.ICGM-1}
any model $M\prec H_\theta$ in $V$ of size $\aleph_1$ which 
has a  $\mathbb{P}_0*\dot{\mathbb{Q}}_{\mathbb{P}_0}$-generic filter, 
is such that $M\cap H_\lambda$ is $\aleph_1$-internally club 
\item  \label{prop.ICGM-2}
any model $M\prec H_\theta$ in $V$ of size $\aleph_1$ which has a 
 $\mathbb{P}_1*\dot{\mathbb{Q}}_{\mathbb{P}_1}$-generic filter, is such that 
 $M\cap H_\lambda$ is $\aleph_1$-internally unbounded but not $\aleph_1$-internally 
 stationary, 
\item  \label{prop.ICGM-3}
any model $M\prec H_\theta$ in $V$ of size $\aleph_1$ which have a  
$\mathbb{P}_2*\dot{\mathbb{Q}}_{\mathbb{P}_2}$-generic filter is such that 
$M\cap H_\lambda$ is $\aleph_1$-internally stationary but not $\aleph_1$-internally 
club.  
\end{enumerate}
\end{theorem}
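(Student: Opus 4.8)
The plan is to exhibit, for each $i<3$, the two-step iteration $\mathbb{P}_i=\mathbb{C}*\dot{\mathbb{R}}_i$ and then to invoke Lemma~\ref{lem.ICGM} to manufacture the auxiliary ccc poset $\dot{\mathbb{Q}}_{\mathbb{P}_i}$. The first factor $\mathbb{C}$ is common to all three cases: take it to be a $\sigma$-closed forcing collapsing $2^\lambda$ to $\aleph_1$, for instance $\Coll(\omega_1,2^\lambda)$. Since $\mathbb{C}$ is $\sigma$-closed it adds no new countable sets, so it trivially has the $\omega_1$-covering property, and $\sigma$-closed forcings are well known to have the $\omega_1$-approximation property; moreover $\mathbb{C}$ collapses $2^\lambda$ to $\aleph_1$. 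Thus $\mathbb{C}$ alone already meets the hypotheses of Lemma~\ref{lem.ICGM}, and the sole role of the second factor $\dot{\mathbb{R}}_i$ is to add a combinatorial object pinning down the internal structure of the collapsed models while \emph{preserving} the approximation and covering properties.

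For the second factors I would use Krueger's model-chain posets from~\cite{krueger.IA} and~\cite{krueger.internally_club}. For $i=0$ (internally club) take $\dot{\mathbb{R}}_0$ to force a continuous increasing $\in$-chain of countable elementary submodels cofinal in $P_{\aleph_1}(H_\lambda)$; the continuity of the generic chain is exactly what forces $M\cap H_\lambda$ to be $\aleph_1$-internally club. For $i=1$ (internally unbounded but not internally stationary) take $\dot{\mathbb{R}}_1$ to force a cofinal set of internal models whose complement in $P_{\aleph_1}(M)$ remains stationary, so that $M\cap P_{\aleph_1}(M)$ is cofinal yet non-stationary. For $i=2$ (internally stationary but not internally club) take $\dot{\mathbb{R}}_2$ to force a disjoint stationary sequence on $H_\lambda$, Krueger's hallmark object, whose presence makes the collapsed model internally stationary while simultaneously obstructing any internal club. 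In each case the relevant preservation and separation facts are precisely those established in~\cite{krueger.IA,krueger.internally_club}, so I would cite the exact lemmas there rather than reprove them.

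Having fixed $\mathbb{P}_i=\mathbb{C}*\dot{\mathbb{R}}_i$, the next step is to verify that each $\mathbb{P}_i$ still satisfies the hypotheses of Lemma~\ref{lem.ICGM}: it collapses $2^\lambda$ to $\aleph_1$ (already achieved by $\mathbb{C}$) and it retains the $\omega_1$-approximation and $\omega_1$-covering properties after the second factor. Granting this, Lemma~\ref{lem.ICGM} supplies the ccc poset $\dot{\mathbb{Q}}_{\mathbb{P}_i}$ with the stated guessing property, so that any $M\prec H_\theta$ of size $\aleph_1$ in $V$ carrying a $\mathbb{P}_i*\dot{\mathbb{Q}}_{\mathbb{P}_i}$-generic filter has $M\cap H_\lambda$ an $\aleph_1$-guessing model. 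It then remains only to observe that the same $M$-generic filter, restricted to the $\dot{\mathbb{R}}_i$ coordinate, reads off internally to $M$ the continuous chain, the cofinal non-stationary set, or the disjoint stationary sequence respectively, which is exactly the data certifying that $M\cap H_\lambda$ is internally club, internally unbounded but not stationary, or internally stationary but not club.

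The main obstacle I expect is the simultaneous preservation of the $\omega_1$-approximation property through the second factor \emph{and} the two negative conclusions --- that the models in cases $i=1,2$ genuinely fail internal stationarity and internal club. Neither negative conclusion is a formal consequence of the guessing property, and both rest on the delicate combinatorics of disjoint stationary sequences and mixed-support side-condition forcings developed by Krueger; checking that his posets keep the approximation property intact while forcing the separating object is the technical heart of the argument, and is the reason we only sketch the proof here and defer to~\cite{krueger.IA,krueger.internally_club} for the full details.
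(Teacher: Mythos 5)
Your overall architecture matches the paper's: a two-step iteration $\mathbb{C}*\dot{\mathbb{R}}_i$, preservation facts quoted from Krueger, and Lemma~\ref{lem.ICGM} invoked to append the ccc factor $\dot{\mathbb{Q}}_{\mathbb{P}_i}$ that yields the guessing property. However, your choice of first factor is a genuine error, not a cosmetic variant. In the paper (following Krueger) $\mathbb{C}$ is \emph{Cohen forcing} --- it adds a Cohen real --- and the collapse of $2^\lambda$ to $\aleph_1$ is achieved by the second factor, which forces a continuous $\omega_1$-chain through $P_{\omega_1}(H_\lambda)$. The Cohen real is essential: it is what makes $(P_{\omega_1}X)^{V[G]}\setminus V$ nonempty (indeed cofinal), and the posets $\mathbb{R}_1$ and $\mathbb{R}_2$ are defined as end-extension posets of continuous maps $f\colon\alpha+1\to(P_{\omega_1}X)^{V[G]}$ whose values are required to lie \emph{outside} $V$ on the coordinates in a fixed stationary set $E_1$ (for $\mathbb{R}_1$) or everywhere (for $\mathbb{R}_2$). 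Reading off the generic chain inside an $M$ carrying an $M$-generic filter, the coordinates whose values avoid $V$ (hence avoid $M$) are exactly what witnesses that $M\cap P_{\aleph_1}(M\cap H_\lambda)$ omits a stationary set, i.e.\ that $M\cap H_\lambda$ fails to be internally stationary (case 2) or internally club (case 3).

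With your $\sigma$-closed $\Coll(\omega_1,2^\lambda)$ as first factor, no new countable subsets of $H_\lambda$ are added, so $(P_{\omega_1}X)^{V[G]}=(P_{\omega_1}X)^{V}$; the poset $\mathbb{R}_2$ becomes empty and $\mathbb{R}_1$ has no admissible values on the $E_1$-coordinates. More to the point, whatever second factor you then choose must itself add the countable sets that certify the negative conclusions, and at that stage the $\omega_1$-covering argument you implicitly rely on (``all new countable sets already appear after the first factor, which has covering'') breaks down. So the separation results in cases (2) and (3) cannot be recovered by your recipe as stated. The fix is exactly the paper's: take $\mathbb{C}$ to be Cohen forcing, define the $\mathbb{R}_i$ in $V[G]$ as above relative to a partition $E_0\cup E_1$ of $\omega_1$ into stationary sets, verify $\aleph_1$-distributivity of each $\mathbb{R}_i$ in $V[G]$ to get covering, and quote Krueger for approximation and (semi)properness. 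A further small inaccuracy: the second factors are plain end-extension posets of continuous maps, not mixed-support side-condition forcings.
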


Assume Theorem~\ref{prop.ICGM} is granted. Then we can use the formulation of $\MM$  given by Lemma~\ref{lem.MMWoo} to get models of size $\aleph_1$ which have generic filter for each $\mathbb{P}_i*\dot{\mathbb{Q}}_{\mathbb{P}_i}$. 
The combination of Lemma~\ref{lem.ICGM} with Theorem~\ref{prop.ICGM} will then prove Theorem~\ref{the.PFAIUGM}.

We sketch a proof of Theorem~\ref{prop.ICGM}.
\begin{proof}
Define each $\mathbb{P}_i=\mathbb{C}*\dot{\mathbb{R}}_i$  as a two steps iteration, where 
$\mathbb{C}$ is Cohen forcing and $\dot{\mathbb{R}_i}\in V^{\mathbb{C}}$ are 
$\mathbb{C}$-names for posets.
To define $\dot{\mathbb{R}_i}$, fix $G$ a $V$-generic filter for $\mathbb{C}$, 
let $X=(H_\lambda)^V$, fix a partition of $\omega_1$ in two disjoint stationary sets $E_0,E_1\in V$.
$\mathbb{R}_i=\sigma_G(\dot{\mathbb{R}_i})$ are the following posets in $V[G]$:

\begin{itemize}
\item
$\mathbb{R}_0$ is the poset of continuous maps 
$f\colon \alpha+1\to (P_{\omega_1}X)^{V}$ with $\alpha$ a countable ordinal.
\item 
$\mathbb{R}_1$ is the poset of continuous maps $f\colon \alpha+1\to (P_{\omega_1}X)^{V[G]}$ 
where $\alpha$ is a countable ordinal and $f(\xi)\in V$ iff $\xi\in E_0$ for all $\xi\leq\alpha$.
\item 
$\mathbb{R}_2$ is the poset of continuous maps $f\colon \alpha+1\to (P_{\omega_1}X)^{V[G]}\setminus V$ 
with $\alpha$ a countable ordinal.
\end{itemize}
The order of each $R_i$ is end extension.

In~\cite{krueger.IA} and~\cite{krueger.internally_club} it is essentially shown:
\begin{proposition}
$\mathbb{P}_i$ is stationary set preserving and has the $\omega_1$-covering 
and $\omega_1$-approximation properties for all $i<3$.
\end{proposition}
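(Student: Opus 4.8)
The plan is to derive all three conclusions from two facts about the factors of $\mathbb{P}_i=\mathbb{C}*\dot{\mathbb{R}}_i$: that Cohen forcing $\mathbb{C}$ is ccc, hence proper, and that each $\dot{\mathbb{R}}_i$ is forced by $\mathbb{C}$ to be proper. Since properness is preserved under two-step iterations and every proper poset is stationary set preserving (as recalled in Section~\ref{subsect.notation}), this already yields that each $\mathbb{P}_i$ is stationary set preserving, and simultaneously the $\omega_1$-covering property, because a proper poset has the countable covering property: every countable set of ordinals in the extension is contained in some countable set of the ground model. The properness of $\dot{\mathbb{R}}_i$ is the standard club-shooting argument: given a countable $N\prec H_\chi$ containing all relevant parameters and a condition $f\in N$, one threads an $N$-generic continuous end-extension of $f$ whose top value is $X\cap N$ (respectively $(H_\lambda)^{V[G]}\cap N$); continuity at the top together with the end-extension ordering makes this an $(N,\dot{\mathbb{R}}_i)$-generic condition. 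For $\dot{\mathbb{R}}_1$ and $\dot{\mathbb{R}}_2$ one additionally respects the parity constraint governed by $E_0,E_1$, respectively the non-groundedness of the top coordinate, which is possible since $E_0,E_1$ are stationary and the new countable subsets of $X$ are cofinal.

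The substantive part is the $\omega_1$-approximation property, and this is exactly where the factorization $\mathbb{P}_i=\mathbb{C}*\dot{\mathbb{R}}_i$ is indispensable. I would run Hamkins' closure-point argument: a two-step iteration whose first factor is nontrivial and countable and whose second factor is sufficiently closed has the $\omega_1$-approximation property. The argument proceeds by contradiction. If a condition forced some set $x$ of ordinals to be $\omega_1$-approximated but not to lie in $V$, one locates an ordinal $\eta$ and two conditions disagreeing on the statement ``$\eta\in x$'', and then exploits the two mutually generic copies of the countable factor $\mathbb{C}$, together with the closure of the second factor, to amalgamate two generic objects $x_0,x_1$ that agree on every countable ground-model set yet differ at $\eta$. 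This contradicts the assumed approximation of $x$.

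The obstacle is that only $\dot{\mathbb{R}}_0$ is $\sigma$-closed, so Hamkins' theorem applies verbatim only to $\mathbb{P}_0$. By design, $\dot{\mathbb{R}}_1$ and $\dot{\mathbb{R}}_2$ add new countable subsets of $X$ --- this is precisely what forces the resulting models to be internally unbounded but not internally stationary, respectively internally stationary but not internally club --- so neither is $\sigma$-closed. Here I would invoke Krueger's refinement from \cite{krueger.IA} and \cite{krueger.internally_club}: although these posets are not $\sigma$-closed, they carry enough closure along the generic continuous chain (a strong-properness-type property) that the mutual-genericity amalgamation still succeeds. Concretely, two $N$-generic threads can be built agreeing on all countable ground-model data while their top segments are chosen to differ, which is what the disagreement at $\eta$ demands. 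I would reproduce this amalgamation, the only delicate bookkeeping being to keep all countable restrictions identical while respecting the $E_0/E_1$ parity constraint for $\mathbb{R}_1$ and the non-groundedness constraint for $\mathbb{R}_2$. I expect this amalgamation step --- forcing a global disagreement while preserving equality on every countable ground-model set, for the non-$\sigma$-closed factors $\mathbb{R}_1$ and $\mathbb{R}_2$ --- to be the main difficulty of the proof.
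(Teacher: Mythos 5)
There is a genuine gap, and it is located exactly where you place the least weight: the claim that each $\dot{\mathbb{R}}_i$ is forced to be proper. This is false for $i=1,2$, and the paper says so explicitly (``Actually $\mathbb{P}_1$ and $\mathbb{P}_2$ are just semiproper, while $\mathbb{P}_0$ is proper''). Your own description of the properness argument shows why it must fail: a totally $(N,\dot{\mathbb{R}}_i)$-generic thread through a countable $N\prec H_\chi$ is forced by continuity to have top value $N\cap X$ at the ordinal $\delta_N=N\cap\omega_1$. For $\mathbb{R}_2$ every value must lie in $(P_{\omega_1}X)^{V[G]}\setminus V$, yet since $\mathbb{C}$ is proper there are stationarily many $N$ with $N\cap X\in V$; for such $N$ no generic condition can exist. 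For $\mathbb{R}_1$ the constraint ``$f(\xi)\in V$ iff $\xi\in E_0$'' clashes for stationarily many $N$ with the actual status of $N\cap X$. Since you derive \emph{both} stationary set preservation \emph{and} the $\omega_1$-covering property from properness, both conclusions collapse for $i=1,2$. Note also that semiproperness, which is what one actually has, does not rescue the covering claim: semiproper posets need not have the $\omega_1$-covering property (Namba forcing is the standard counterexample). This is not a cosmetic issue --- it is precisely why Theorem~\ref{the.PFAIUGM} needs $\MM$ rather than $\PFA$ for parts \ref{the.PFAIUGM-2} and \ref{the.PFAIUGM-3}.

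The paper's route is different on both failing points. Stationary set preservation is obtained from the semiproperness of each $\mathbb{R}_i$ over $V[G]$ (Krueger's Proposition 3.7 in \cite{krueger.IA}, adapted for $\mathbb{R}_1$), plus the fact that a two-step iteration of semiproper posets is semiproper. The $\omega_1$-covering property is obtained not from any properness-type property but from the $\aleph_1$-distributivity of each $\mathbb{R}_i$ over $V[G]$: every new countable set of ordinals added by $\mathbb{P}_i$ already appears in $V[G]$, and Cohen forcing has the $\omega_1$-covering property. You should also be careful with the closure claim in your approximation argument: $\mathbb{R}_0$ is not literally $\sigma$-closed in $V[G]$, since the union of a decreasing $\omega$-chain of conditions has as its limit value a countable union, computed in $V[G]$, of members of $(P_{\omega_1}X)^V$, which need not lie in $V$; what Krueger proves and uses is $\aleph_1$-distributivity together with the $\mathbb{C}*\dot{\mathbb{R}}_i$ factorization. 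Your overall plan for the approximation property (a Hamkins-style mutual-genericity amalgamation through the countable first factor) is in the right spirit of the cited Proposition 2.4 of \cite{krueger.internally_club}, but as written it rests on the same unavailable closure hypotheses.
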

\begin{proof}
It is a standard argument that each $\mathbb{P}_i$ is stationary set preserving: 
see~\cite[Proposition 3.7]{krueger.IA} for a proof that $\mathbb{R}_0$ and 
$\mathbb{R}_2$ are semiproper in $V[G]$, modify that proof to check that also $\mathbb{R}_1$ 
is semiproper in $V[G]$. 
The conclusion for each $\mathbb{P}_i$ follows from the fact that each $\mathbb{P}_i$ 
is a two step iteration of semiproper posets.

\cite[Proposition 2.4]{krueger.internally_club} proves the $\omega_1$-approximation 
property for the poset $\mathbb{P}_0$. 
The interested reader can supply
the modifications needed to prove the same proposition for $\mathbb{P}_1$, $\mathbb{P}_2$.

To check the $\omega_1$-covering property for each $\mathbb{P}_i$ we use 
that each $\mathbb{R}_i$ is $\aleph_1$-distributive in $V[G]$~\cite[Lemma 2.2]{krueger.internally_club}. 
Thus all new countable sets of ordinals added by any $\mathbb{P}_i$ are appearing already 
in $V[G]$ where $G$ is a $V$-generic filter for $\mathbb{C}$. 
Since $\mathbb{C}$ satisfies the $\omega_1$-covering property, this shows 
that the $\omega_1$-covering property holds for each $\mathbb{P}_i$.
\end{proof}

Still following Krueger's cited papers we can show the following:
\begin{itemize}
\item 
any model $M\prec H_\theta$ in $V$ of size $\aleph_1$ which 
has a  $\mathbb{P}_0*\dot{\mathbb{Q}}_{\mathbb{P}_0}$-generic filter, 
is such that $M\cap H_\lambda$ is $\aleph_1$-internally club 
(see  the discussion on pages 5-6 of~\cite{krueger.internally_club}),
\item 
any model $M\prec H_\theta$ in $V$ of size $\aleph_1$ which has a 
 $\mathbb{P}_1*\dot{\mathbb{Q}}_{\mathbb{P}_1}$-generic filter, is such that 
 $M\cap H_\lambda$ is $\aleph_1$-internally unbounded but not $\aleph_1$-internally 
 stationary (adapt with obvious modifications the proof of~\cite[Theorem 3.9]{krueger.IA}), 
\item 
any model $M\prec H_\theta$ in $V$ of size $\aleph_1$ which have a  
$\mathbb{P}_2*\dot{\mathbb{Q}}_{\mathbb{P}_2}$-generic filter is such that 
$M\cap H_\lambda$ is $\aleph_1$-internally stationary but not $\aleph_1$-internally 
club (see the proof of~\cite[Theorem 3.6]{krueger.IA} and modify it according 
to the definition of $\mathbb{P}_1$).
\end{itemize}
Actually $\mathbb{P}_1$ and $\mathbb{P}_2$ are just semiproper, while $\mathbb{P}_0$ 
is proper~\cite[Proposition 2.3]{krueger.internally_club}.
This completes the proof of the theorem.
\end{proof}


\section{Isomorphism types of guessing models}\label{sect.isotypesGM}

In this section we will show that for \( \delta \)-guessing models $M$ which are 
\( \delta \)-internally club, the isomorphism type is uniquely determined by the ordinal 
$M\cap\kappa_M$ and the order-type of the set of cardinals in $M$. 
In the case of $\aleph_0$-guessing models this is Magidor's result that any 
$\aleph_0$-guessing model $M\prec V_\lambda$ is isomorphic to some $V_\gamma$, 
however when we want to extend this result to $\delta$-guessing models we must 
put some extra condition to constrain the variety of possible isomorphism types.

Given a set $M$, let $\Card_M$ be the set of cardinals in $M$ and 
\[
\chi_M\colon  \Card_M\rightarrow\sup ( M \cap \Ord ) , \quad
 \lambda \mapsto\sup(M\cap  \lambda ) .
\]

The next result generalizes Magidor's Lemma~\ref{lem.isotype0guesmod} 
on the isomorphism type of $\aleph_0$-guessing models.

\begin{theorem}\label{thm.isotypealeph1guesmod} 
Let $\mathfrak{R}_i=\langle H_{\theta_i},\in\rangle$ with $\theta_i$ regular cardinals for $i=0,1$.
Assume $M_i\prec H_{\theta_i}$  are $\delta$-guessing models which are $\delta$-internally club and:
\begin{enumerate}[label={\upshape (\alph*)}]
\item \label{thm.isotypealeph1guesmod-1}
$\kappa_{M_0}=\kappa_{M_1}=\kappa$,
\item \label{thm.isotypealeph1guesmod-2}
$M_0\cap \kappa=M_1\cap \kappa$,
\item  \label{thm.isotypealeph1guesmod-3}
$P_\delta(\delta)\cap M_0=P_\delta(\delta)\cap M_1$
\item \label{thm.isotypealeph1guesmod-4}
 $\otp(\Card_{M_0})=\otp(\Card_{M_1})$.  
\end{enumerate}
Then $M_0$ and $M_1$ are isomorphic.
\end{theorem}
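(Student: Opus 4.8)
The plan is to reduce the statement to an equality of transitive collapses. Writing $\pi_{M_i}$ for the collapsing map of $M_i$ and $N_i=\pi_{M_i}[M_i]$, it suffices to prove $N_0=N_1$, since two isomorphic transitive $\in$-structures are equal, and then $\pi_{M_1}^{-1}\circ\pi_{M_0}$ is the desired isomorphism. Using \ref{thm.isotypealeph1guesmod-1} and \ref{thm.isotypealeph1guesmod-4} I fix the order isomorphism $h\colon\Card_{M_0}\to\Card_{M_1}$, and I prove by induction on $\lambda\in\Card_{M_0}$ that $\pi_{M_0}[M_0\cap H_\lambda]=\pi_{M_1}[M_1\cap H_{h(\lambda)}]$. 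The induction is anchored below $\kappa$ by \ref{thm.isotypealeph1guesmod-2}, where $\pi_{M_i}$ is the identity on ordinals; since every element of $M_i$ lies in $H_\nu$ for some $\nu\in\Card_{M_i}$, taking unions then yields $N_0=N_1$.

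The engine of the successor step is a coding observation combined with the guessing property. If $\lambda'$ denotes the successor of $\lambda$ in $\Card_{M_i}$, then every $x\in M_i\cap H_{\lambda'}$ has hereditary cardinality at most $\lambda$ (by elementarity, since $M_i$ contains no cardinal strictly between $\lambda$ and $\lambda'$), so $x$ is coded inside $M_i$ by a subset of $\lambda$ via a definable decoding with which the collapse commutes. Thus the step reduces to identifying the collapsed power set $\pi_{M_i}[M_i\cap P(\lambda)]$. Writing $\bar\lambda=\otp(M_i\cap\lambda)$ and $\mathcal S^i_\lambda=\pi_{M_i}[P_\delta(\lambda)\cap M_i]$ for the collapsed family of genuinely small members of $M_i$, the $\delta$-guessing property gives
\[
\pi_{M_i}[M_i\cap P(\lambda)]=\{B\subseteq\bar\lambda : B\cap z\in\mathcal S^i_\lambda\ \text{for every }z\in\mathcal S^i_\lambda\}.
\]
The nontrivial inclusion is obtained by taking $d=j_{M_i}[B]$ and checking that $d\cap Z=j_{M_i}[B\cap\pi_{M_i}(Z)]\in M_i$ for each small $Z\in M_i$ (here one uses that $Z\subseteq M_i$ whenever $Z\in P_\delta(\lambda)\cap M_i$), so that $d$ is $(\delta,M_i)$-approximated and hence $M_i$-guessed. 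Consequently the collapsed power set depends only on $\bar\lambda$ and $\mathcal S^i_\lambda$, and the step is won once $\mathcal S^0_\lambda=\mathcal S^1_{h(\lambda)}$.

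Matching the small-subset families $\mathcal S^i_\lambda$ is therefore the heart of the matter. For $\lambda=\delta$ this is precisely hypothesis \ref{thm.isotypealeph1guesmod-3}, since the collapse is the identity below $\kappa\ge\delta$ and hence $\mathcal S^i_\delta=P_\delta(\delta)\cap M_i$. When $\cf(\lambda)\ge\delta$ every $Z\in P_\delta(\lambda)\cap M_i$ is bounded, so $Z\in M_i\cap H_\lambda$ and $\mathcal S^i_\lambda$ is definable inside the common collapsed structure $\pi_{M_i}[M_i\cap H_\lambda]$ (already matched by the induction hypothesis at $\lambda$) as the set of its elements that are subsets of $\bar\lambda$ of size $<\delta$; this forces $\mathcal S^0_\lambda=\mathcal S^1_{h(\lambda)}$. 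The genuinely hard case is a singular $\lambda$ with $\cf(\lambda)<\delta$, where $M_i$ contains small subsets of $\lambda$ that are cofinal in $\lambda$: these live at the level $\lambda^+$ rather than in $H_\lambda$ and so are not directly controlled by the induction hypothesis.

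To treat the singular case I would fix in $M_i$ an increasing sequence $\langle\lambda_\xi:\xi<\cf(\lambda)\rangle$ of cardinals of $M_i$ cofinal in $\lambda$, whose collapse is cofinal in $\bar\lambda$ and is matched across the two models by the induction hypothesis and \ref{thm.isotypealeph1guesmod-4}. Any $Z\in P_\delta(\lambda)\cap M_i$ decomposes as $Z=\bigcup_\xi (Z\cap\lambda_\xi)$ with each piece a bounded small member of $M_i\cap H_\lambda$, so that $\pi_{M_i}[Z]$ is recovered from the collapsed $\cf(\lambda)$-sequence $\pi_{M_i}(\langle Z\cap\lambda_\xi:\xi\rangle)$. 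The matching of $\mathcal S^i_\lambda$ thus reduces to showing that the $<\delta$-sequences of elements of the common collapsed structure lying in $M_i$ agree for $i=0,1$; this is where $\delta$-internal clubness is essential, since it lets one replace membership in $M_i$ by closure under a continuous internal approximation of $M_i$, and where \ref{thm.isotypealeph1guesmod-3} re-enters after coding such sequences down to subsets of $\delta$. I expect precisely this point — pinning down which cofinal small sequences are present in a $\delta$-internally club guessing model — to be the main obstacle. Once it is settled, the limit-cardinal stages are mere unions, $N_0=N_1$ follows, and $M_0$ and $M_1$ are isomorphic.
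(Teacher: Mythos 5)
Your reduction to equality of transitive collapses, the induction along $\Card_{M_i}$, and the formula
\[
\pi_{M_i}[M_i\cap P(\lambda)]=\{B\subseteq\bar\lambda : B\cap z\in\mathcal{S}^i_\lambda\ \text{for all }z\in\mathcal{S}^i_\lambda\}
\]
(obtained by showing $j_{M_i}[B]$ is $(\delta,M_i)$-approximated, hence guessed) are all sound and match the skeleton of the paper's argument, which likewise reduces everything to matching the structures $\langle M_i\cap\alpha,P(\alpha)\cap M_i,\in\rangle$ level by level and uses the guessing property for exactly this power-set step. But there is a genuine gap: the case you defer --- matching $\mathcal{S}^0_\lambda$ with $\mathcal{S}^1_{h(\lambda)}$ when $\cf(\lambda)<\delta$, so that $M_i$ contains cofinal members of $P_\delta(\lambda)$ not living in $H_\lambda$ --- is not a technical loose end but the one place in the whole proof where $\delta$-internal clubness and hypothesis~\ref{thm.isotypealeph1guesmod-3} actually do work, and you explicitly acknowledge that you do not have the argument. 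Without it the theorem is not established; indeed the paper's remark following the theorem makes clear that some hypothesis of this kind is exactly what constrains the isomorphism type beyond the $\aleph_0$-guessing case.

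The paper closes this case as follows. Since each $M_i$ is $\delta$-internally club, $M_i\cap P_\delta(M_i\cap\lambda)$ contains a club $C_i^*$ of $P_\delta(M_i\cap\lambda)$; pushing both forward by the collapses onto the common order type $\xi=\otp(M_i\cap\lambda)$ and intersecting gives a single club $C\subseteq P_\delta(\xi)$ whose lifts $C_i$ lie in $M_i$ and correspond to each other under the order isomorphism $\pi^*$. Given any $e\in M_0\cap P_\delta(\lambda)$ one picks $d\in C_0$ with $e\subseteq d$; then $\pi^*[d]\in M_1$, and since $\otp(d)<\delta$, hypothesis~\ref{thm.isotypealeph1guesmod-3} gives $P(\otp(d))\cap M_0=P(\otp(d))\cap M_1$, so transporting $e$ through the collapse $\pi_d\in M_0$ of $d$ and back through the collapse $\pi_{\pi^*(d)}\in M_1$ of $\pi^*[d]$ yields $\pi^*[e]\in M_1$ (and symmetrically). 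Note that your proposed detour through a cofinal sequence $\langle\lambda_\xi:\xi<\cf(\lambda)\rangle$ does not avoid this: deciding which collapsed $<\delta$-sequences are present in $M_i$ is the same problem, and it is solved by the same club-intersection trick. A smaller point: your claim that the induction below $\kappa$ is ``anchored by~\ref{thm.isotypealeph1guesmod-2}'' is too quick, since~\ref{thm.isotypealeph1guesmod-2} only matches ordinals; to get $M_0\cap H_\kappa=M_1\cap H_\kappa$ you must already run the approximation-plus-guessing step at each cardinal of $M_i$ in $[\delta,\kappa)$, using~\ref{thm.isotypealeph1guesmod-3} as the true base. That part, however, does go through with the machinery you already have.
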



\begin{proof}
If $\delta^{<\delta}<\kappa$, then both $M_i$ are $\aleph_0$-guessing models by 
part~\ref{prop.basiguessing-4} of Proposition~\ref{prop.basiguessing} hence
Magidor's Lemma~\ref{lem.isotype0guesmod} applies. 
So for the rest of the proof we can assume $\kappa \leq\delta^{<\delta} $. 
We will show that
\[
\langle M_0\cap\theta_0,P(\theta_0)\cap M_0,\in\rangle
\] 
is isomorphic to 
\[
\langle M_1\cap\theta_1,P(\theta_1)\cap M_1,\in\rangle.
\]
This suffices by the following:
\begin{lemma}
Assume $M_0\prec H_{\theta_0}$ and $M_1\prec H_{\theta_1}$ are such that 
$\langle M_i\cap\theta_i,M_i\cap P(\theta_i),\in\rangle$ are isomorphic structures. 
Then also $\langle M_i,\in\rangle$ are isomorphic structures for $i=0,1$.
\end{lemma}
\begin{proof}
Notice that the isomorphism $\pi$ of $\langle M_0\cap\theta_0,P(\theta_0)\cap M_0,\in\rangle$ 
with $\langle M_1\cap\theta_1,P(\theta_1)\cap M_1, \in\rangle$ is unique and factors through 
$\pi_{M_1}^{-1}\circ\pi_{M_0}$ where $\pi_{M_i}$ are the collapsing maps of $M_i$.
In particular $\pi$ is uniquely determined by the order isomorphism on $M_i\cap\theta_i$ and maps 
$A\in M_0\cap P(\theta_0)$ to the unique $B\in M_1\cap P(\theta_1)$ such that $B\cap M_1=\pi[A]$.

For each $x\in M_0$ we want to find $z_x\in M_1$ so that the map $\pi^*( x ) \coloneqq z_x$ 
is an isomorphism of $\langle M_0,\in\rangle$ onto $\langle M_1,\in\rangle$ extending $\pi$.
So, given $x\in M_0$,
Pick $A_x\subseteq |\trcl(x) | = \lambda_x$ with $A_x,\lambda_x\in M_i$ such that $A_x$ 
codes (modulo the map $\phi_*\colon  \Ord^2\to\Ord$ which linearly orders pairs of ordinals 
according to the square order on $\Ord^2$) an extensional and well founded relation 
$R_x\in M_i$ on $\lambda_x$ such that the transitive collapse of $\langle\lambda_x, R_x\rangle$ 
is $\langle \trcl(x),\in\rangle$.  
Let $\gamma=\pi(\lambda_X)$ and $B_x=\pi(A_x)$. 
Observe that $M_1$ models that $\phi_*^{-1}[B_x]$ is an extensional and well founded binary 
relation on $\gamma$ since $\langle\gamma\cap M_1,\phi_*^{-1}[B_x] \cap M_1\rangle$ 
is isomorphic to $\langle\lambda_x\cap M_0, R_x\cap M_0\rangle$ and the latter is a 
well-founded extensional relation. 
Let $y\in M_1$ be the transitive collapse of $\gamma$ with respect to the binary relation 
$\phi_*^{-1}[B_x]$ and $z_x\in M_1$ be the set of elements in $y$ of highest rank.
We leave to the reader to check that the map $x\mapsto z_x$ is an isomorphism of 
$\langle M_0,\in\rangle$ onto $\langle M_1,\in\rangle$ extending $\pi$.
\end{proof}

The proof now goes by induction on $ \mu \coloneqq \otp(\Card_{M_i})\setminus\kappa_M$.
Let $\{\alpha_i^\eta:\eta<\mu\}=\Card_{M_i}\setminus\kappa$. 
We show by induction on $\eta<\mu$ that we can define unique isomorphisms
\begin{equation*}\label{eq.iso}
\pi_\eta\colon\langle M_0\cap\alpha_0^\eta,P(\alpha_0^\eta)\cap M_0,\in\rangle\to 
\langle M_1\cap\alpha_1^\eta,P(\alpha_1^\eta)\cap M_1,\in\rangle.
\end{equation*}
The map $\pi_\eta$ can be defined only if the following conditions are satisifed:
\begin{enumerate}[label={\upshape (\Alph*)}]
\item \label{req.1}
$\otp(\alpha_0^\eta\cap M_0)=\otp(\alpha_1^\eta\cap M_1)$,
\item \label{req.2}
$\pi_\eta[a]=\pi_\eta(a)$ for all sets of ordinals 
$a\in M_0\cap P_\delta(\alpha_0^\eta)$,
\item \label{req.3}
$\pi_\eta(A)\cap M_1=\pi_\eta[A\cap M_0]$ for all sets of ordinals $A\in M_0\cap P(\alpha_0^\eta)$.
\end{enumerate}
We have that~\ref{req.3} implies~\ref{req.2} which implies~\ref{req.1}.

We shall proceed by induction to define a coherent sequence 
\[
\{\pi_\eta : \eta < \otp(M_i\cap\Card\setminus\kappa)\}
\] 
of isomorphisms of the structures $\langle M_i\cap\alpha_i^\eta,P(\alpha_i^\eta)\cap M_i, \in \rangle$.
Given $\{\pi_\eta:\eta<\gamma)\}$ in order to define $\pi_\gamma$ we shall 
check step by step that~\ref{req.1}, \ref{req.2}, and~\ref{req.3} are satisfied 
by the unique possible extension of $\cup_{\eta<\gamma}\pi_\eta=\pi^*$ to an 
isomorphism of the structures 
\[
\langle M_i\cap\alpha_0^\eta , P(\alpha_i^\eta) \cap M_i, \in \rangle.
\]
We will need that the models $M_i$ are internally club to check that 
condition~\ref{req.2} is satisfied by $\pi^*$ and the guessing property of the models 
$M_i$ to check that  $\pi^*$ can be extended to a $\pi_\gamma$ 
which satisfies condition~\ref{req.3}.

The induction will split in three cases:
\begin{description}
\item[Case 0:  $\alpha_i^0=\kappa=\kappa_{M_i}$.] \label{cas.0}

Clearly the identity map defines an isomorphism of the ordinal $M_i\cap\kappa$
with itself. By our assumptions, $M_i\cap P_\delta\kappa$ 
are club subsets $C_i$ of $P_\delta(M_i\cap\kappa)$.
Let $C=C_0\cap C_1$. 
Then $C\subseteq M_0\cap M_1$ and $Id:\kappa\cap M_0\to\kappa\cap M_i$ 
can be extended to $\pi^*: C\cup(\kappa\cap M_0)\to C\cup(\kappa\cap M_1)$ by letting
$\pi^*(c)=\pi[c]$ for all $c\in C$. 
Now pick $d\in C$, since the collapsing map $\pi_d\in M_0\cap M_1$ and $\otp(d)<\delta$, if $e\subseteq d$:
\begin{align*} \label{keycl.thm.isotypeGM}
e\in M_0 & \iff e\in P(d)\cap M_0 
\\ 
& \iff  \pi_d [ e ] \in M_0 \cap P ( \otp ( d ) ) = M_1 \cap P( \otp ( d ) ) 
 \\
& \iff e\in P(d)\cap M_1
\end{align*}
where $\pi_d\in M_0\cap M_1$ is the transitive collapse of $d$ to its order type.

Since $C$ is cofinal in $P_\delta(M_i\cap\kappa)$ we have that 
$M_0\cap P_\delta(\kappa)=M_1\cap P_\delta(\kappa)$. 
Thus $\pi^*(d)=d$ is an isomorphism of the structures 
\[
\langle M_i\cap\kappa,P_\delta(\kappa)\cap M_i,\in\rangle.
\]

We extend $\pi^*$ to an isomorphism of the structures
$\langle M_i\cap\kappa,P(\kappa)\cap M_i,\in\rangle$ 
using the guessing property of each $M_i$ as follows:
\begin{align*}
d\in M_0\cap P(\kappa) & \iff
d\cap M_0 \text{ is $(\delta,M_0)$-approximated}
\\
& \iff \text{$d\cap M_1$ is $(\delta,M_1)$-approximated}
\\
& \iff \text{$d\cap M_1=e(d)\cap M_1$ for some $e(d)\in M_1\cap P(\kappa)$.}
\end{align*}
The mapping $\pi_0$ which is the identity on $M_0\cap\kappa$ and sends 
$d\mapsto e(d)$ is an isomorphism of $\langle M_0\cap\kappa,P(\kappa)\cap M_0,\in\rangle$ 
with $\langle M_1\cap\kappa,P(\kappa)\cap M_1,\in\rangle$.
\end{description}

Now assume the induction has been carried up to some ordinal 
$\eta<\xi$ by defining a sequence of coherent and unique isomorphisms 
\[
\pi_\beta\colon\langle M_0\cap\alpha_0^\beta,P(\alpha_0^\beta)\cap M_0,\in\rangle\to
\langle M_1\cap\alpha_1^\beta,P(\alpha_1^\beta)\cap M_1,\in\rangle
\]
for all $\beta<\eta$.

\begin{description}





\item[Case 1:] 
$\alpha_0^\eta$ is a limit cardinal.

First of all observe that 
\[
\pi^*=\cup_{\xi<\eta}\pi_\xi\restriction M_0\cap\alpha_0^\xi
\] 
is the order isomorphism between
$M_0\cap\alpha_0^\eta$ and $M_1\cap\alpha_1^\eta$.

Our aim is to show
\begin{claim} \label{keycl.thm.isotypeGM2}
$\pi^*[e]\in M_1\cap P_{\omega_1} (\alpha_1^\eta)$ iff $e\in M_0\cap P_{\omega_1} (\alpha_0^\eta)$.
\end{claim}

\begin{proof}
First we choose
clubs $C^*_i\subseteq M_i\cap P_\delta(\alpha_i^\eta)$. Next we observe that if 
$\xi=\otp(M_i\cap\alpha_i^\eta)$, $\pi_{M_i}[C^*_i]=C_i'$ are club subsets of $P_\delta\xi$. 
We let $C=C_0'\cap C_1'$ and $C_i$ be the club subsets of 
$M_i\cap P_\delta (M_i\cap\alpha_i^\eta)$ which collapse to $C$. 

Then:
\begin{enumerate}[label=(\roman*)]
\item \label{subcas.cas2-1}
$\pi^*[d]\in C_1$ iff $d\in C_0$ for all $d\in P_\delta (M_0\cap\alpha_i^\eta)$, 
\item \label{subcas.cas2-2}
$C_i$ are cofinal subsets of  $P_\delta (M_i\cap\alpha_i^\eta)$,
\item \label{subcas.cas2-3}
$\pi^*[e]=\pi_{\pi^*(d)}^{-1}\circ\pi_d[e]$
where $\pi_d\in M_0$ and $\pi_{\pi^*(d)}\in M_1$ are the maps which collapse $d$ and $\pi^*(d)$ to their common order type.
\end{enumerate}
Now 
the Claim is easily proved as follows:
given $e\in M_0$, by~\ref{subcas.cas2-2} we can find $d\in C_0$ such that $e\subseteq d$,  
by~\ref{subcas.cas2-1} $\pi^*[d]\in M_1$, and
by~\ref{subcas.cas2-3}  $\pi^*[e]=\pi_{\pi^*(d)}^{-1}\circ\pi_d[e]$. 
Since $\pi_d[e]\in P(\otp(d))\cap M_0=P(\otp(d))\cap M_1$ and $\pi_{\pi^*(d)}^{-1}\in M_1$ 
we get that $\pi^*[e]\in M_1$. With a simmetric argument we can prove that $\pi^{*-1}[e]\in M_0$ 
if $e\in M_1\cap P_\delta (M_0\cap\alpha_0^\eta)$.
\end{proof}
Finally we can extend $\pi^*$ to $\pi_\eta$ by the usual trick employed in the previous cases.

\item[Case 2:] \label{cas.2}
$\alpha_0^\eta$ is a successor cardinal.

We are given $\pi_\beta$ isomorphism of 
\begin{equation*}
\langle\alpha_0^\beta\cap M_0, P(\alpha_0^\beta)\cap M_0, \in\rangle
\end{equation*} 
with 
\begin{equation*}
\langle\alpha_1^\beta\cap M_1, P(\alpha_1^\beta)\cap M_1, \in\rangle.
\end{equation*}
Any ordinal $\delta$ in $\alpha_i^{\beta+1}$ is coded by a binary relation on 
$\alpha_i^\beta$ whose transitive collapse is $\delta$. 
Now let $\phi_i\in M_i$ be functions from $\alpha_i^{\beta+1}$ to $P(\alpha_i^\beta)$ 
such that for each $\gamma<\alpha_i^{\beta+1}$, $\phi_*^{-1}[\phi_i(\gamma)]$ 
is a binary relation that codes $\gamma$ (where $\phi_*\in M_0\cap M_1$ is some recursive bijection 
of $\Ord^2$ with $\Ord$).

Then we can extend $\pi_\beta$ to $\pi^*$ on $M_0\cap\alpha_0^{\beta+1}$ as follows,
$\pi^*(\gamma)=\delta$ iff $\phi_1(\delta)=\pi_\beta(\phi_0(\gamma))$. 
We leave to the reader to check that $\pi^*$ is the order isomorphism of the sets 
$M_i\cap\alpha_i^{\beta+1}$ by the following steps:

\begin{itemize}
\item 
$\dom(\pi^*)=M_0\cap\alpha_0^{\beta+1}$ and $\rng(\pi^*)=M_1\cap\alpha_1^{\beta+1}$,
\item 
$\alpha<\gamma$ iff $\pi^*(\alpha)<\pi^*(\gamma)$,
\item 
$\alpha=\gamma$ iff $\pi^*(\alpha)=\pi^*(\gamma)$.
\end{itemize}

Arguing as in Claim~\ref{keycl.thm.isotypeGM2}
\[
\pi^*[a]\in M_1\cap P_{\omega_1}(\alpha_1^\eta) \iff a\in M_0\cap P_{\omega_1}(\alpha_0^\eta).
\]
Now we proceed with the usual trick to define $\pi_\eta$ using the guessing property of 
each $M_i$ and the isomorphism $\pi^*$ between $P_{\omega_1}(\alpha_i^\eta)\cap M_i$.
\end{description}
This completes the proof of the theorem.
\end{proof}

\begin{remark}
The proof actually show that what matters is not that the models $M_i$ are internally club 
but that each $M_i$ ``sits'' inside $P_\delta(M_i\cap\theta_i)$ in a similar way.
To make this a precise assertion assume the case of the theorem in which $\delta=\aleph_1 = | M_i |$, 
let each $M_i=\bigcup\{M_i^\alpha:\alpha<\omega_1\}$ be a continuous increasing union of the countable sets $M_i^\alpha$.
Then we can relax the requirement on each $M_i$ of being internally club to the requirement
\[
\{\alpha<\omega_1: M_i^\alpha\in M_i\text{ for }i=0,1\}
\] 
is unbounded in $\omega_1$. 

It is not clear under which conditions on $A_i$ the theorem can hold with respect to guessing 
models $M_i\prec\mathfrak{R}_i=\langle H_{\theta_i}, \in, A_i\rangle$ where both $A_i$ are proper class in $H_{\theta_i}$.
\end{remark}

\begin{remark}\label{rem.PFAIsotype}
In models of $\PFA$ there is a well-order of the reals in type $\omega_2$ definable in 
$H_{\aleph_2}$ using as parameter a ladder system on $\omega_1$.
(A ladder system on \( \omega _1\) is a 
$\mathcal{C}=\{C_\alpha:\alpha<\omega_1\}$ where $C_\alpha \subseteq \alpha $,
\( \bigcup_{} C_ \alpha  = \alpha \) and \( \otp ( C_ \alpha  ) = \omega \) 
for all $\alpha$ limit.)
Thus, assuming $\PFA$, if there is a ladder system in $M_0\cap M_1$, 
assumption~\ref{thm.isotypealeph1guesmod-3} in Theorem~\ref{thm.isotypealeph1guesmod} 
can be removed ---
this will be crucial in the proof of Theorem~\ref{thm.PFASLD}.

Assuming Woodin's $(*)$-axiom (a strenghtening of $\mathsf{BMM}$), 
there is always a ladder sequence in $M_0\cap M_1$ provided 
that $\omega_1\subseteq M_i$.
However it is not known whether $(*)$ is compatible with the full $\PFA$.
\end{remark}

\subsection{Faithful models}\label{subsect.faithful}

In this section assume $\theta$ is inaccessible in $V$.
The above characterization of isomorphism types for $\delta$-guessing, 
$\delta$-internally club models is not completely satisfactory since it could be the case that
two such models $M_0,M_1\prec H_\theta$ have the same isomorphism type, 
are such that  $\kappa_{M_0}=\kappa_{M_1}=\kappa$, $M_0\cap\kappa=M_1\cap\kappa$, 
but for some cardinal $\lambda\in M_0\cap M_1\setminus \kappa$, 
$\chi_{M_0}(\lambda)=\chi_{M_1}(\lambda)$ and 
$\chi_{M_0}\restriction \lambda\neq\chi_{M_1}\restriction \lambda$.  
We shall show that for $\aleph_0$-guessing models this cannot be the case, 
thus we would like that this rigidity property of $\aleph_0$-guessing models 
holds also for arbitrary guessing models. We shall see that in models of $\MM$ 
there is a stationary set of $\aleph_1$-guessing models which have this rigidity property.
Let for a suitable initial segment $\mathfrak{R}=\langle R,A,\in\rangle$:
\[
\mathrm {G}_\kappa^\delta(\mathfrak{R})=\{M\prec R: M\text{ is a $\delta$-guessing model and }\kappa_M=\kappa\}
\]
For $S$ stationary subset of $P(V_\theta)$, let 
$T(S)=\{\chi_M\restriction \gamma\colon   M\in S, \gamma\in \Card\cap M\}$.

\begin{theorem} 
The following holds:
\begin{enumerate}[label={\upshape (\arabic*)}]
\item 
$T(\mathrm {G}_\kappa^{\aleph_0}(V_\theta))$ is a tree of functions ordered by end extension.
\item \label{thm.faithfulmodels2}
Assume $\MM$. 
Then there is $S$ stationary subset of 
$\mathrm {G}_{\aleph_2}^{\aleph_1}(V_\theta)\cap \mathrm {IC}^{\aleph_1}(V_\theta)$ 
such that $T(S)$ is a tree of functions ordered by end extension.
\end{enumerate}
\end{theorem}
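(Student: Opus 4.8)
The plan is to prove the two parts by isolating the key ``rigidity'' mechanism that the characteristic functions $\chi_M$ encode, namely that along a cardinal $\lambda\in M$ the initial segment $\chi_M\restriction\lambda$ is completely determined by how $M$ sits below $\lambda$. I first treat part (1), which is the $\aleph_0$-guessing case, since it isolates the phenomenon in its cleanest form and serves as the template for part (2).

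For part (1): by Magidor's Lemma~\ref{lem.isotype0guesmod}, every $M\in\mathrm G_\kappa^{\aleph_0}(V_\theta)$ collapses to some $V_{\gamma_M}$, with $\pi_M^{-1}\colon V_{\gamma_M}\to V_\theta$ elementary and $\kappa_M=\kappa$ fixed. I would show that $T(\mathrm G_\kappa^{\aleph_0}(V_\theta))$ is a tree under end-extension by verifying that whenever $\chi_{M_0}\restriction\gamma$ and $\chi_{M_1}\restriction\gamma'$ agree on a common cardinal $\lambda$, i.e. $\chi_{M_0}(\lambda)=\chi_{M_1}(\lambda)$, then they already agree on all cardinals $\mu<\lambda$ in both models; this is exactly the statement that the set of predecessors of any node is linearly ordered by end-extension, which is what ``tree of functions ordered by end extension'' demands. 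The heart of this is a rigidity claim: if $M_0,M_1\in\mathrm G_\kappa^{\aleph_0}(V_\theta)$ have $\kappa_{M_0}=\kappa_{M_1}=\kappa$, $M_0\cap\kappa=M_1\cap\kappa$, and $\chi_{M_0}(\lambda)=\chi_{M_1}(\lambda)$ for some cardinal $\lambda$, then $\chi_{M_0}\restriction\lambda=\chi_{M_1}\restriction\lambda$. I would prove this by running the collapse argument of Lemma~\ref{lem.isotype0guesmod} simultaneously for both models: the transitive collapses of $M_i\cap V_\lambda$ are both some $V_{\delta_i}$, and the hypothesis $\chi_{M_0}(\lambda)=\chi_{M_1}(\lambda)$ forces $\otp(M_0\cap\lambda)=\otp(M_1\cap\lambda)$ and hence $\delta_0=\delta_1$; the guessing property then pins down the two image structures $V_{\delta_i}$ to be literally equal, from which $\chi_{M_0}\restriction\lambda=\chi_{M_1}\restriction\lambda$ follows by reading off the cofinalities of the collapsing maps.

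For part (2), assuming $\MM$: I would obtain a stationary set $S$ refining the one produced by Theorem~\ref{the.PFAIUGM}\ref{the.PFAIUGM-1}, so that every $M\in S$ is an $\aleph_1$-guessing, $\aleph_1$-internally club model with $\kappa_M=\aleph_2$, and then thin out to enforce coherence of the $\chi_M\restriction\gamma$. The natural mechanism is Theorem~\ref{thm.isotypealeph1guesmod}: two $\delta$-internally club $\delta$-guessing models agreeing on items \ref{thm.isotypealeph1guesmod-1}--\ref{thm.isotypealeph1guesmod-4} are isomorphic, and the unique isomorphism is exactly the comparison map between their characteristic functions. So I would use a reflection/Fodor-style argument on $\mathrm G_{\aleph_2}^{\aleph_1}(V_\theta)\cap\mathrm{IC}^{\aleph_1}(V_\theta)$: by normality of the club filter one can fix on a stationary set the values $M\cap\aleph_2$ and $P_{\aleph_1}(\aleph_1)\cap M$ (there are few enough possibilities after collapsing, since $|M|=\aleph_1$), and then Theorem~\ref{thm.isotypealeph1guesmod} makes the characteristic functions cohere under end-extension exactly as in part (1). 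The tree property then reduces, as above, to the rigidity claim: on $S$, if $\chi_{M_0}(\lambda)=\chi_{M_1}(\lambda)$ then $\chi_{M_0}\restriction\lambda=\chi_{M_1}\restriction\lambda$, which I would deduce from the uniqueness of the isomorphism in Theorem~\ref{thm.isotypealeph1guesmod} applied to the submodels $M_i\cap H_\lambda$.

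The main obstacle I anticipate is the second part, specifically arranging that the thinned-out stationary set $S$ simultaneously (a) stays stationary and (b) has the coherence needed for $T(S)$ to be a tree rather than merely a set of partial functions. The delicate point is that Theorem~\ref{thm.isotypealeph1guesmod} requires agreement of $P_\delta(\delta)\cap M_i$ and of $\otp(\Card_{M_i})$, and these are global features, whereas the end-extension tree structure is a statement about all initial segments at once; I expect to need a careful pressing-down argument guaranteeing that the relevant isomorphism invariants are constant on a stationary set and that the $\aleph_1$-internal clubness built into $S$ is preserved under the thinning, so that the uniqueness clause of the isomorphism can be invoked locally at each cardinal $\lambda\in M_0\cap M_1$ to force the characteristic functions to branch as a tree.
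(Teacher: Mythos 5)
You have correctly identified the content of the theorem --- the rigidity claim that $\chi_{M_0}(\lambda)=\chi_{M_1}(\lambda)$ forces $\chi_{M_0}\restriction\lambda=\chi_{M_1}\restriction\lambda$ --- but the mechanism you propose to prove it does not work, and it is precisely the mechanism the paper warns against at the opening of Section~\ref{subsect.faithful}. In part (1) you argue that equality of the suprema $\sup(M_0\cap\lambda)=\sup(M_1\cap\lambda)$ ``forces $\otp(M_0\cap\lambda)=\otp(M_1\cap\lambda)$'' and that the guessing property then makes the image structures ``literally equal''. Neither step is justified: two sets of ordinals with the same supremum need not have the same order type, and even granting that both $M_i\cap V_\lambda$ collapse to the same $V_\delta$, this only says the structures are \emph{isomorphic}; it says nothing about how the two copies of $V_\delta$ sit inside $V_\lambda$, i.e.\ about the values $\sup(M_i\cap\mu)$ for $\mu<\lambda$. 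The paper explicitly states that models can share isomorphism type, $\kappa_M$, $M\cap\kappa$, and $\chi_{M_0}(\lambda)=\chi_{M_1}(\lambda)$ while $\chi_{M_0}\restriction\lambda\neq\chi_{M_1}\restriction\lambda$ --- this is the entire motivation for introducing \emph{faithful} models. The missing idea is the stationary-set coding: fix (from a well-order $<_*$ available in both models) the least partition $\{S_\alpha:\alpha<\lambda\}$ of $E^\lambda_{\aleph_0}$ into stationary sets; then for a faithful model, $\alpha\in M$ iff $S_\alpha$ reflects at $\sup(M\cap\lambda)$, so the single ordinal $\chi_M(\lambda)$ recovers the whole set $M\cap\lambda$ and hence $\chi_M\restriction\lambda$. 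For $\aleph_0$-guessing models faithfulness is free because the collapse is a rank initial segment $V_\gamma$, which computes stationarity correctly; that is how part (1) actually goes.

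Part (2) has the same gap compounded by a quantitative one. Theorem~\ref{thm.isotypealeph1guesmod} again only yields isomorphism, not agreement of the characteristic functions, so invoking its uniqueness clause ``locally at each $\lambda$'' cannot produce the coherence you need. Moreover the Fodor-style thinning cannot stabilize $\chi_M$: normality of the club filter lets you fix one value of a regressive function per application, whereas coherence of $T(S)$ is a simultaneous condition on all pairs of models at all common cardinals, with far too many possible traces to press down on. The paper instead \emph{forces} faithfulness: it designs a stationary-set-preserving iteration $\mathbb{C}*\dot{\mathbb{P}}$ whose conditions carry, alongside the continuous chain $f_p$ building an internally club model, a second coordinate $\phi_p$ into the stationary sets, with the coupling $\xi\in S_\eta$ iff $\sup(f_p(\xi)\cap\sup\phi_p(\eta))\in\phi_p(\eta)$; models admitting generic filters for this poset (which exist stationarily often by $\MM$ via Lemma~\ref{lem.MMWoo}, composed with the guessing-model poset of Lemma~\ref{lem.ICGM}) are simultaneously $\aleph_1$-guessing, internally club, and faithful, and then the same coding lemma as in part (1) applies. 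Without some such device for manufacturing faithfulness your stationary set $S$ need not satisfy the rigidity claim at all.
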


We need the following definition.
Given a set of ordinals $S$ such that $S$ is a stationary subset of $\sup(S)$ let: 
\[
P^*(S)=\{T\subseteq S: T\text{ is stationary in }\sup(S)\}.
\]

\begin{definition}\label{def.faithfulmodel}
$M\prec V_\theta$ is an \emph{$S$-faithful model} if for all $T\in P(S)\cap M$, 
$T$ reflects on $\sup(M\cap S))$ iff $T\in P^*(S)$.

$M\prec V_\theta$ is a \emph{$\lambda$-faithful model} if $M$ is 
$E^\lambda_{\aleph_0}$-faithful.

$M\prec V_\theta$ is a \emph{faithful model} if $M$ is $E^\lambda_{\aleph_0}$-faithful 
for all regular $\lambda\in M$.
\end{definition}

The following lemma motivates the definition of faithful models:
\begin{lemma}
Assume $M_0,M_1\prec \langle V_\theta,\in,<_*\rangle$ where $<_*$ is a well-order of $V_\theta$
are $\lambda$-faithful models for some regular 
$\lambda\in M_0\cap M_1$ and $\chi_{M_0}(\lambda)=\chi_{M_1}(\lambda)$. 
Then $\chi_{M_0}\restriction \lambda=\chi_{M_1}\restriction \lambda$.  
\end{lemma}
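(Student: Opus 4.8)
The plan is to reduce everything to the claim that, for each cardinal $\mu\in M_0\cap M_1$ with $\mu<\lambda$, the ordinal $\sup(M_i\cap\mu)$ depends only on $\beta\coloneqq\chi_{M_0}(\lambda)=\chi_{M_1}(\lambda)=\sup(M_i\cap\lambda)$ together with the fixed shared data $\lambda,\mu,<_*$, and is therefore the same for $i=0,1$; and that the same shared data recovers which $\mu<\lambda$ lie in $\Card_{M_i}$. Since $<_*$ well-orders $V_\theta$, each $M_i$ has definable Skolem functions, so for every $\mu\in M_0\cap M_1$ the $<_*$-least surjection $f_\mu\colon\lambda\to\mu$ belongs to $M_0\cap M_1$; by elementarity $M_i\cap\mu=f_\mu[M_i\cap\lambda]$, whence $\sup(M_i\cap\mu)=\sup f_\mu[M_i\cap\lambda]$. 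Thus $\chi_{M_i}\restriction\lambda$ is entirely controlled by the single set $M_i\cap\lambda$ and the shared maps $f_\mu$, and the whole problem is to pin $\sup(M_i\cap\lambda)$-data down to $\sup(M_i\cap\mu)$-data using $\lambda$-faithfulness.

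First I would set up the coding. Using $<_*$ fix the ladder system $\langle c_\alpha:\alpha\in E^\lambda_{\aleph_0}\rangle$, where $c_\alpha$ is the $<_*$-least cofinal $\omega$-sequence in $\alpha$; this lies in $M_0\cap M_1$. For a cardinal $\mu\in M_0\cap M_1$, $\mu<\lambda$, define $g_\mu(\alpha)=\sup_n f_\mu(c_\alpha(n))$, so that $g_\mu\in M_0\cap M_1$, and for $\alpha\in M_i\cap E^\lambda_{\aleph_0}$ one checks $g_\mu(\alpha)\in M_i$ and hence $g_\mu(\alpha)<\sup(M_i\cap\mu)$ (the value cannot equal the sup, which is not attained in $M_i$). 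Then I would define, purely from $\beta$, the threshold ordinal $\gamma_\mu(\beta)$ as the supremum of those $\delta<\mu$ for which $\{\alpha\in E^\lambda_{\aleph_0}:g_\mu(\alpha)\ge\delta\}\cap\beta$ is stationary in $\beta$. This $\gamma_\mu(\beta)$ is manifestly independent of $i$, so the lemma reduces to a single equality, $\sup(M_i\cap\mu)=\gamma_\mu(\beta)$, for each fixed faithful $M_i$.

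The core of the argument is this equality, and it is where $\lambda$-faithfulness does the work in both directions. For the lower bound $\gamma_\mu(\beta)\ge\sup(M_i\cap\mu)$: for $\delta\in M_i\cap\mu$ the set $Z_\delta\coloneqq\{\alpha\in E^\lambda_{\aleph_0}:g_\mu(\alpha)\ge\delta\}$ lies in $M_i$ and is stationary in $\lambda$ (since $f_\mu$ takes values $\ge\delta$ on a fiber that one may arrange, by $<_*$-least choice, to be stationary, forcing $g_\mu$ past $\delta$ on a stationary set), so faithfulness gives that $Z_\delta\cap\beta$ is stationary in $\beta$; as $Z_\delta$ shrinks in $\delta$ and $M_i\cap\mu$ is cofinal in $\sup(M_i\cap\mu)$, this yields stationarity of $Z_\delta\cap\beta$ for all $\delta<\sup(M_i\cap\mu)$. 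The delicate half is the upper bound $\gamma_\mu(\beta)\le\sup(M_i\cap\mu)$: here one must exhibit a club of $\beta$ avoiding $\{\alpha:g_\mu(\alpha)\ge\sup(M_i\cap\mu)\}$, equivalently show that this last set is \emph{non}-stationary at $\beta$. This is exactly the non-reflection direction of faithfulness, applied to an $M_i$-definable set approximating the threshold from above, and it is the step I expect to be the main obstacle, since the threshold value $\sup(M_i\cap\mu)$ itself is not in $M_i$ and so the relevant set is not directly a member of $M_i$; the plan is to approximate it by members $Z_\delta$ of $M_i$ with $\delta$ increasing to $\sup(M_i\cap\mu)$ and to transfer non-stationarity of their intersection across the supremum using that $\cf(\beta)$ and $\cf(\sup(M_i\cap\mu))$ are both governed by the continuous internally club structure of $M_i$.

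Finally I would record the domain equality $\Card_{M_0}\cap\lambda=\Card_{M_1}\cap\lambda$: membership of a cardinal $\mu<\lambda$ in $M_i$ is recoverable from $\beta$ by the same device, since $\mu\in M_i$ iff the closure of $\{f_\nu\}$-images stabilizes at $\mu$, a condition read off from the shared threshold ordinals $\gamma_\nu(\beta)$. With both the domains and the values of $\chi_{M_i}\restriction\lambda$ expressed through $i$-independent quantities, $\chi_{M_0}\restriction\lambda=\chi_{M_1}\restriction\lambda$ follows. I would organize the whole computation as an induction on the cardinals $\mu\le\lambda$ in $M_0\cap M_1$, so that at each successor step the maps $f_\mu$ may be taken relative to the previously reconstructed segment; the base case $\mu=\lambda$ is the hypothesis, and only $\lambda$-faithfulness (not faithfulness at intermediate cardinals) is invoked, which is what the statement permits.
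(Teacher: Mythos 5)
Your opening reduction via the $<_*$-least surjections $f_\mu\colon\lambda\to\mu$ is sound as far as it goes: $M_i\cap\mu=f_\mu[M_i\cap\lambda]$, so everything is controlled by the trace $M_i\cap\lambda$. But from that point on the strategy has genuine gaps. The central claim $\sup(M_i\cap\mu)=\gamma_\mu(\beta)$ is unproved in both halves, and the second half faces a real obstruction. For the lower bound you need $Z_\delta=\{\alpha\in E^\lambda_{\aleph_0}:g_\mu(\alpha)\ge\delta\}$ to be stationary in $\lambda$; since $c_\alpha$ is a fixed $\omega$-ladder, there is no reason it should meet the fiber $f_\mu^{-1}[[\delta,\mu)]$ even if that fiber is arranged to be stationary, so $Z_\delta$ may fail to be stationary and faithfulness then gives you nothing. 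For the upper bound you must show $\{\alpha<\beta:g_\mu(\alpha)\ge\sup(M_i\cap\mu)\}$ is non-stationary in $\beta$; this set is not an element of $M_i$ (its defining parameter $\sup(M_i\cap\mu)$ is not in $M_i$), so $\lambda$-faithfulness says nothing about it, and for $\alpha\in\beta\setminus M_i$ the ladder $c_\alpha$ is unconstrained, so $g_\mu(\alpha)$ can overshoot $\sup(M_i\cap\mu)$ on a stationary subset of $\beta$; your proposed repair appeals to an ``internally club structure'' that is not a hypothesis of the lemma and in any case does not control $g_\mu$ at ordinals outside $M_i$. The closing step recovering $\Card_{M_0}\cap\lambda=\Card_{M_1}\cap\lambda$ from ``stabilization of closures of $f_\nu$-images'' is not a precise argument.

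The paper's proof is one line and recovers more: let $\{S_\alpha:\alpha<\lambda\}$ be the $<_*$-least partition of $E^\lambda_{\aleph_0}$ into stationary sets. It lies in $M_0\cap M_1$, and $\alpha\in M_i$ iff $S_\alpha\in M_i$, in which case $\lambda$-faithfulness applies to $S_\alpha$ and says that $S_\alpha$ reflects at $\chi_{M_i}(\lambda)=\beta$ iff it is stationary, which it is. Thus $M_i\cap\lambda=\{\alpha<\lambda: S_\alpha\text{ reflects at }\beta\}$ is the same set for $i=0,1$, and equality of $\Card_{M_i}\cap\lambda$ and of the suprema $\sup(M_i\cap\mu)$ follows at once. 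The moral is to index the stationary sets by the very ordinals you want to recover, so that membership in $M_i\cap\lambda$ itself --- not merely a supremum --- is read off from reflection at $\beta$ of sets that actually belong to $M_i$; reconstructing only the ordinals $\sup(M_i\cap\mu)$ forces you to test reflection of sets that are not members of $M_i$, which is exactly what faithfulness does not provide.
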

\begin{proof}
Let $\mathrm{S}_\lambda=\{S_\alpha:\alpha<\lambda\}\in M_0\cap M_1$ be the least 
partition under $<_*$ of $E^\lambda_{\aleph_0}$ in stationary sets, then $\mathrm{S}_\lambda\in M_0\cap M_1$ and:
\[
\alpha\in M_i\iff M_i\models S_\alpha\text{ is stationary } \iff  S_\alpha \text{ reflects on }\chi_{M_i}(\lambda)
\]
Thus
\[
M_i\cap\lambda=\{\alpha: S_\alpha \text{ reflects on }\chi_{M_i}(\lambda)\}
\]
and we are done.
\end{proof}

\begin{lemma}
If $M\prec V_\theta$ is an $\aleph_0$-guessing model then $M$ is a faithful model.
\end{lemma}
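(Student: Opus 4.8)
The plan is to run Magidor's analysis and then reduce faithfulness to a stationary‑reflection equivalence that I can drive with the guessing property. Since $M$ is an $\aleph_0$-guessing model, Lemma~\ref{lem.isotype0guesmod} gives $\pi_M[M]=V_\gamma$, so $j\coloneqq\pi_M^{-1}\colon V_\gamma\to V_\theta$ is elementary with $\rng(j)=M$. Fix a regular $\lambda\in M$, put $S=E^\lambda_{\aleph_0}$, $\bar\lambda=\pi_M(\lambda)$ and $\beta=\sup(M\cap\lambda)$. By Proposition~\ref{prop.basiguessing}\ref{prop.basiguessing-6} $M$ is closed under countable suprema, so $M\cap E^\lambda_{\aleph_0}$ is cofinal in $M\cap\lambda$ and $\sup(M\cap S)=\beta$. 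If $\beta=\lambda$ then $T\cap\beta=T$ and $S$-faithfulness is immediate, so I assume $\beta<\lambda$; this forces $\lambda\geq\kappa_M$, makes $\bar\lambda$ a regular cardinal which is $\geq\bar\kappa=M\cap\kappa_M$ and hence uncountable by Proposition~\ref{prop.basiguessing}\ref{prop.basiguessing-5}, and gives $\cf(\beta)=\bar\lambda$. For $T\in P(S)\cap M$, writing $\bar T=\pi_M(T)$, I will have $\bar T=\{\rho<\bar\lambda:j(\rho)\in T\}\subseteq E^{\bar\lambda}_{\aleph_0}$ and $T=j(\bar T)$.

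The reduction is to two equivalences: (i) $T$ is stationary in $\lambda$ iff $\bar T$ is stationary in $\bar\lambda$, and (ii) $T\cap\beta$ is stationary in $\beta$ iff $\bar T$ is stationary in $\bar\lambda$. Together they say exactly that $T$ reflects on $\sup(M\cap S)$ iff $T\in P^*(S)$, which is $S$-faithfulness; as $\lambda$ was an arbitrary regular element of $M$, this yields that $M$ is faithful. Equivalence (i) is routine absoluteness: stationarity of $T\subseteq\lambda$ is computed correctly in $V_\theta$ (all clubs of $\lambda$ lie in $V_\theta$ since $\lambda<\theta$), transfers through $M\prec V_\theta$ and the collapse to stationarity of $\bar T$ in $\bar\lambda$ inside $V_\gamma$, and is absolute there because $P(\bar\lambda)\subseteq V_\gamma$ (as $\bar\lambda<\gamma$). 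So all the content is in (ii).

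The structural fact that powers (ii) is that the increasing enumeration $j\restriction\bar\lambda$ of $M\cap\lambda$ is \emph{continuous at every $\rho$ of countable cofinality}: if $\cf(\rho)=\omega$ then $\sup j[\rho]$ is a countable supremum of elements of $M$, hence lies in $M\cap\lambda$ and equals $j(\rho)$. The easy direction of (ii) is contrapositive: if $\bar T$ is non-stationary, fix a club $\bar C\subseteq\bar\lambda$ disjoint from $\bar T$; then $j(\bar C)$ is club in $\lambda$ and disjoint from $T=j(\bar T)$, and the limit points of $j[\bar C]$ below $\beta$ form a club of $\beta$ contained in $j(\bar C)$, hence disjoint from $T$, so $T\cap\beta$ is non-stationary.

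For the converse — stationarity of $\bar T$ forcing $T\cap\beta$ stationary — I invoke the $\aleph_0$-guessing property. Given any club $C\subseteq\beta$, note $C$ is trivially $(\aleph_0,M)$-approximated, since its intersection with any finite member of $M$ is a finite subset of $M$ and so lies in $M$; guessing then yields $D\in M\cap P(\lambda)$ with $C\cap M=D\cap M$, whose collapse is $\pi_M(D)=\{\rho<\bar\lambda:j(\rho)\in C\}$. Using cofinality of $C$ I build a club $\bar E\subseteq\bar\lambda$ (the closure points of $\mu\mapsto\min\{\nu:C\cap[j(\mu),j(\nu))\neq\emptyset\}$) such that every $\rho\in\bar E$ with $\cf(\rho)=\omega$ satisfies $j(\rho)=\sup(C\cap j(\rho))$; here continuity of $j$ at $\rho$ guarantees that the $C$-points trapped in the blocks below $j(\rho)$ accumulate to $j(\rho)$, and closedness of $C$ gives $j(\rho)\in C$. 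Since $\bar T\subseteq E^{\bar\lambda}_{\aleph_0}$ is stationary it meets the club $\bar E$, and any $\rho$ in the intersection gives $j(\rho)\in T\cap\beta\cap C$. I expect this converse to be the main obstacle: $j$ need not be continuous at uncountable cofinalities, so an arbitrary club of $\beta$ could in principle be skipped by $M\cap\lambda$ — the resolution is that $T$ concentrates on countable-cofinality ordinals, exactly where continuity of $j$ (i.e. closure of $M$ under countable suprema) is available, while the guessing property is what imports the external club $C$ into $M$ to begin with.
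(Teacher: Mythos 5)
Your argument is correct and is essentially the paper's proof: collapse $M$ to some $V_\gamma$ via Magidor's Lemma~\ref{lem.isotype0guesmod} and verify that $T$ reflects at $\sup(M\cap\lambda)$ iff $\pi_M(T)$ is stationary in $\pi_M(\lambda)$ iff $T$ is stationary, with your continuity-at-countable-cofinalities analysis simply filling in the details the paper leaves implicit. One small remark: the set $D\in M$ you extract by guessing the club $C$ is never used afterwards --- the hard direction runs entirely on the club $\bar E\subseteq\bar\lambda$ and the stationarity of $\bar T$, so that step can be deleted.
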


\begin{proof}
$M$ is isomorphic to $V_\gamma$ for some $\gamma$ by Lemma \ref{lem.isotype0guesmod}. 
Thus for any regular cardinal $\lambda\in M$ and $S\in M\cap P(\lambda)$:
\[
S \text{ reflects on }\chi_M(\lambda) \iff \pi_M(S) \text{ is a stationary subset of } \pi_M(\lambda)
\]
and $\lambda$ is regular iff $\pi_M(\lambda)$ is regular.
\end{proof}
By the two lemmas the first part of the theorem is proved.
To prove the second part of the theorem we proceed as follows:
\begin{proof}
Let in $V$
\[
X=\bigcup\{P^*(E^\lambda_{\aleph_0}):\lambda<\theta\text{ is regular}\}
\]
Fix also in $V$ a family $\{S_\alpha:\alpha<\omega_1\}$ of disjoint stationary subsets 
of $\omega_1$ such that $\min S_\alpha\geq\alpha$ for all $\alpha$ and 
$\{S_\alpha:\alpha<\omega_1\}$ is a maximal antichain on $P(\omega_1)/ \mathsf{NS}_{\omega_1}$.

Let $\mathbb{C}$ be Cohen forcing. In $V[G]$ where $G$ is $V$-generic for $\mathbb{C}$ 
we define the poset $\mathbb{P}$ as follows.

A condition $p\in\mathbb{P}$ is a pair $(f_p,\phi_p)$ such that:
\begin{itemize}
\item 
$f_p\colon \alpha+1\rightarrow V\cap (P_{\omega_1}V_\theta)^{V[G]}$ is a continuous map.
\item 
$\phi_p\colon \alpha+1\rightarrow X$ is such that for all $\eta<\xi\leq\alpha$:
\[ 
\xi\in S_\eta \text{ iff } \sup(f_p(\xi)\cap\sup\phi_p(\eta))\in\phi_p(\eta) .
\]
\end{itemize}
$p\leq q$ if $f_p$ extends $f_q$ and $\phi_p$ extends $\phi_q$.
We omit the proof of the following lemma:
\begin{lemma}
The poset $\mathbb{R}=\mathbb{C}*\dot{\mathbb{P}}$ is stationary set preserving 
and has the $\omega_1$-covering and $\omega_1$-approximation properties.
\end{lemma}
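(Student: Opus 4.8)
The plan is to treat $\mathbb{P}$ as a variant of Krueger's club-adding poset $\mathbb{R}_0$ from the proof of Theorem~\ref{prop.ICGM}, enriched by the extra coordinate $\phi_p$, which codes the reflection points of the stationary sets collected in $X$. All three conclusions should then follow by adapting the arguments already invoked for the posets $\mathbb{P}_i=\mathbb{C}*\dot{\mathbb{R}}_i$, the only genuinely new work being the bookkeeping for the coordinate $\phi_p$. Throughout I would exploit that $\mathbb{C}$ is ccc, so that questions about $\mathbb{R}=\mathbb{C}*\dot{\mathbb{P}}$ reduce to questions about $\dot{\mathbb{P}}$ in $V[G]$.

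For stationary set preservation it suffices, by the standard reflection argument, to show that $\mathbb{C}$ forces the following: for every stationary $A\subseteq\omega_1$ and every $p$ there are stationarily many countable $N\prec H_\chi^{V[G]}$ (with $\mathbb{P},X,\langle S_\alpha:\alpha<\omega_1\rangle\in N$) such that $N\cap\omega_1\in A$ and $p$ has an $(N,\mathbb{P})$-semigeneric extension. To produce such an extension I would put $\delta=N\cap\omega_1$, enumerate the dense subsets of $\mathbb{P}$ lying in $N$, and build inside $N$ a descending sequence $p=p_0\geq p_1\geq\cdots$ meeting all of them, arranging $\sup_n\max(\dom f_{p_n})=\delta$. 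The $f$-coordinates glue to a continuous chain of $V$-models whose union $N^*:=\bigcup_n f_{p_n}(\max\dom f_{p_n})$ again lies in $V$ and satisfies $N^*\cap\omega_1=\delta$; this is precisely the content of the properness/distributivity argument for $\mathbb{R}_0$ (cf.~\cite[Lemma 2.2]{krueger.internally_club}). Taking $f_q(\delta)=N^*$ then makes the resulting condition $q=(f_q,\phi_q)$ be $(N,\mathbb{P})$-semigeneric, so that it forces $N\cap\omega_1=\delta$ to be preserved.

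The hard part is to choose $N$, the descending chain, and the values $\phi_q(\eta)$ so that $q$ is genuinely a condition, i.e.\ so that
\[
\delta\in S_\eta\iff\sup\bigl(f_q(\delta)\cap\sup\phi_q(\eta)\bigr)\in\phi_q(\eta)
\]
holds for every $\eta<\delta$. Since the $S_\alpha$ are pairwise disjoint and $\min S_\alpha\geq\alpha$, the ordinal $\delta$ lies in $S_\eta$ for at most one $\eta<\delta$, so at most one instance is a \emph{positive} requirement and all others are \emph{negative}. Writing $\lambda_\eta=\sup\phi_q(\eta)$, the point $\sup(N^*\cap\lambda_\eta)=\sup(N\cap\lambda_\eta)$ has cofinality $\omega$, hence always lies in $E^{\lambda_\eta}_{\aleph_0}$; so each requirement is a genuine assertion about whether this point belongs to the stationary set $\phi_q(\eta)$. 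For \emph{newly added} indices $\eta$ I would use the freedom to choose $\phi_q(\eta)\in X$ so as to realize the prescribed membership (selecting a stationary $T\subseteq E^{\lambda_\eta}_{\aleph_0}$ that contains, resp.\ omits, the already determined point $\sup(N\cap\lambda_\eta)$). For indices $\eta$ \emph{inherited} from $p$, whose value $\phi_p(\eta)\in N$ is fixed, this becomes a reflection demand on $N$, namely that $\phi_p(\eta)$ reflect at $\sup(N\cap\lambda_\eta)$ exactly when $\delta\in S_\eta$; such $N$ occur stationarily by the stationarity of the members of $X$ together with the maximality of the antichain $\langle S_\alpha\rangle$. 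Arranging this coherence simultaneously for all inherited indices, while keeping the chosen $\phi$-values pairwise disjoint at each fixed cardinal so that the single positive demand never clashes with the negative ones sharing that cardinal, is the main obstacle and constitutes the heart of the argument.

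Finally, the $\omega_1$-covering property follows because $\mathbb{P}$ is $\aleph_1$-distributive in $V[G]$ — established by the same fusion argument as for $\mathbb{R}_0$ (\cite[Lemma 2.2]{krueger.internally_club}), the $\phi$-coordinate being handled by the reflection bookkeeping above — so that $\mathbb{R}$ adds no new countable sets of ordinals beyond those added by $\mathbb{C}$, which already has $\omega_1$-covering. The $\omega_1$-approximation property is proved as in \cite[Proposition 2.4]{krueger.internally_club} for $\mathbb{P}_0$: one shows that any $\omega_1$-approximated set in the extension is read off from the generic continuous chain of $V$-models, and the coordinate $\phi$ contributes nothing new, since by the displayed equivalence its values are determined from the ground-model antichain $\langle S_\alpha\rangle$ together with the $f$-coordinate. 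Combining these with Lemma~\ref{lem.ICGM} then yields that the generic adds the desired stationary set of faithful guessing models.
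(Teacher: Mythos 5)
The paper gives no argument for this lemma at all --- it is introduced with ``we omit the proof'' --- so there is nothing of the author's to compare your attempt against; it has to stand on its own. Your architecture is the natural one: treat the $f$-coordinate exactly as Krueger's $\mathbb{R}_0$ (so that $\omega_1$-covering via $\aleph_1$-distributivity over $V[G]$, $\omega_1$-approximation, and the shape of the semigenericity argument all carry over), and isolate as the only new content the verification that the top condition $q$ with $f_q(\delta)=N^*$ satisfies $\delta\in S_\eta\iff\sup\bigl(f_q(\delta)\cap\sup\phi_q(\eta)\bigr)\in\phi_q(\eta)$ for every $\eta<\delta$. That diagnosis is correct. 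But you then explicitly leave that step open (``\dots is the main obstacle and constitutes the heart of the argument''), and the two devices you offer for closing it do not work, so the proposal is not a proof.

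Concretely, consider the negative demands at the top. Suppose the condition $p$ you start from has $\phi_p(\eta)=T$ for some $\eta$ with $\delta\notin S_\eta$, where $T$ contains a club of $E^{\lambda}_{\aleph_0}$ relative to $E^{\lambda}_{\aleph_0}$ --- for instance $T=E^{\lambda}_{\aleph_0}$ itself, which is a legitimate element of $X=\bigcup_\lambda P^*(E^{\lambda}_{\aleph_0})$. Since $T\in N$, elementarity puts such a club $C$ into $N$, and then $\sup(N\cap\lambda)$ is a limit of $C\cap N$ of cofinality $\omega$, hence lies in $C\cap E^{\lambda}_{\aleph_0}\subseteq T$ for \emph{every} admissible $N$. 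So the requirement $\sup(N\cap\lambda)\notin T$ fails always, not just non-stationarily often; no appeal to ``stationarity of the members of $X$ together with maximality of the antichain'' can produce the needed $N$. (The same computation shows such a $p$ cannot even be extended past the first limit ordinal above $\eta$ that is a limit of both $S_\eta$ and its complement but lies outside $S_\eta$, so the difficulty is already present at countable stages, not only at the semigeneric top.) Your fallback --- ``keeping the chosen $\phi$-values pairwise disjoint at each fixed cardinal'' --- is not available either: the initial condition $p$ is arbitrary, and in the intended application the range of $\phi$ must eventually exhaust $X\cap N$, whose members are nowhere near pairwise disjoint. So there is a genuine gap exactly where you flag it, and the heuristics you propose point in a direction that cannot be completed for the poset as literally defined; a correct proof would have to either modify $X$ (e.g.\ restrict to bi-stationary subsets of $E^{\lambda}_{\aleph_0}$, or otherwise weaken the coding clause) or supply an argument for the limit step of an essentially different kind.
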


By $\MM$ in $V$, there are stationarily many $N\prec H_{(2^\theta)^+}$ of size $\aleph_1$ 
which have a generic filter for the poset $\mathbb{R}*\mathbb{Q}_{\mathbb{R}}$, 
where $\mathbb{Q}_{\mathbb{R}}$ is the ccc-poset in $V^{\mathbb{R}}$ used in the proof 
of Theorem~\ref{the.PFAIUGM}.
For any such $N$ we can check the following properties of $M=N\cap V_\theta$:
\[
M\prec V_\theta \text{ is an $\aleph_1$-guessing faithful model which is internally club.}
\]
This sketches the proof of the second part of the theorem.
\end{proof}

The existence of faithful models as in Definition~\ref{def.faithfulmodel} 
is a principle of diagonal reflection. 
This type of reflection properties for successor cardinals 
already appeared in several works of Foreman and others 
to a great extent also in~\cite{foreman_magidor_shelah}. 
Cox in~\cite[Definition 7]{cox10} has introduced a maximal principle of diagonal reflection 
which follows from a strengthening of $\PFA$ and implies the existence of faithful 
models as in Definition~\ref{def.faithfulmodel}.


\section{Laver functions in models of $\PFA$.}\label{sect.laverdiamond}

\begin{definition}
Let $\mathcal{J}$ be a family of elementary embeddings all with the same critical point $\kappa$. 
\begin{itemize}
\item 
$f\colon \kappa\rightarrow H_\kappa$ is a weak $\mathcal{J}$-Laver function if for all $x\in V$ and 
for all $\lambda>\rank(x)$ there is $j\colon V\rightarrow M$ in $\mathcal{J}$ such that $j(f)(\kappa)=x$ 
and $V_\lambda\subseteq M$.
\item 
$f\colon \kappa\rightarrow H_\kappa$ is a strong $\mathcal{J}$-Laver 
function if for all $x$ and for all 
$\lambda>\rank(x)$ there is $j\colon V\rightarrow M$ 
such that $j(f)(\kappa)=x$ and $j[V_\lambda]\in M$.
\end{itemize}
Weak (strong) $\mathcal{J}$-Laver diamond holds at $\kappa$ holds if there is a weak 
(strong) $\mathcal{J}$-Laver function. We shall denote weak (strong) $\mathcal{J}$-Laver 
diamond by $\mathrm{WLD}(\mathcal{J})$ ($\mathrm{SLD}(\mathcal{J})$).
\end{definition}

We shall show in Section~\ref{subsec.LDeqJJ^} that the usual proof of Laver diamond from a 
supercompact cardinal $\kappa$ actually produce a witness for strong $\mathcal{J}_\kappa$-Laver diamond, 
where $\mathcal{J}_\kappa$ is the class of elementary embeddings induced by the stationary 
tower forcing below some $\mathrm{G}_\kappa(H_\theta)$ (see Definition~\ref{def.JkappaLD} 
and Theorem~\ref{secLD.thmeqLD}). 
In Section~\ref{subsec.PFASLD} we show that assuming 
$\PFA$ there is a strong  $\mathcal{J}_{\aleph_2}$-Laver diamond on 
$\aleph_2$ (Theorem~\ref{thm.PFASLD}).


\subsection{Properties of elementary embeddings}\label{subsec.elemb}

In this section we shall briefly recall some basic properties of elementary embeddings,
a reference text for this material is~\cite[Chapter 2]{larson}. 
The reader must have familiarity 
with the basic properties of ultrapower embeddings and of the stationary tower forcing.

Let $V, M$ be transitive class model of $\ZFC$ and $j\colon V\rightarrow M$ 
be an elementary embedding with $\crit(j)=\kappa$. 
Then:
\begin{enumerate}
\item 
If $j[X]\in M$ for some set $X\supseteq\kappa$.
\[
\mathcal{U}_X=\{A\in V\cap P(P_\kappa(X)):j[X]\in j(A)\}
\] 
is a filter (possibly not in $V$) on $P_\kappa X\cap V$ which measures all sets 
in $V$, i.e if $S\in P(P_\kappa X)\cap V$, $S$ in $\mathcal{U}_X$ or 
$P_\kappa(X)\setminus S\in\mathcal{U}_X$.
We call a filter with this property a $V$-ultrafilter on $P_\kappa(X)$.
\item
Let 
\[
V^{P_\kappa X}/\mathcal{U}_X =M_X=\{[f]_X: f\colon P_\kappa X\to V\text{ is in }V\}
\]
where 
\[
[f]_X = \{g: j(g)(j[X])=j(f)(j[X]) \text{ and $g$ is of minimal rank}\}
\] 
and if $R$ is either \(\in\) or \(=\), 
\[
[f]_X \mathrel{R}_X [g]_X \iff j(g)(j[X]) \mathrel{R} j(f)(j[X]).
\]
Then $\langle M_X,\in_X,=_X\rangle$ can be identified with its transitive collapse and
$j=i_X\circ j_X$ where 
\[
j_X \colon V \to M_X, \qquad x \mapsto \langle  x: M \in P_\kappa X\rangle]_X
\]
is elementary and 
\[
i_X \colon M_X \to M, \qquad [f]_X \mapsto j(f)(j[X])
\]
is also elementary.

\item
Assume $X$ is transitive and $\kappa\subseteq X$. Let  $\lambda_X=\sup X\cap \Ord$.
Then every $x\in X$ is represented in the ultrapower $M_X$ by the equivalence class 
$[ \langle \pi_M[x\cap M]: M\in P_\kappa X \rangle ]_X$.
In particular 
\[
\alpha = [ \langle  \otp(\alpha\cap M) : M\in P_\kappa X \rangle ]_X
\] 
for all $\alpha\leq\lambda_X$ and 
\[
\kappa = [ \langle M \cap \kappa : M \in P_\kappa X \rangle ]_X.
\]
For these reasons it is possible to check that $\crit(i_X)>\lambda_X$. 

\item
If $X\supseteq Y\supseteq\kappa$ are both transitive,
we also get a natural elementary embedding 
\[
i_{XY} \colon M_Y\to M_X
\] 
which maps $[f]_Y$ to 
\[
[\langle  f(M\cap Y): M\in P_\kappa X\rangle]_X
\]
and is such that $i_Y=i_X\circ i_{XY}$.
One can also check that that for all 
$x\in Y$, 
\[
i_{XY}(x)=i_Y(x)=i_X(x)=x. 
\]

\end{enumerate}

These properties lead to the following
\begin{fact}\label{secLD.keyremark0}
Let $V,M$ be transitive models of $\ZFC$ and $j:V\rightarrow M$ be an elementary embedding
with critical point $\kappa$ such that
$j[X]\in M$ for some transitive set $X\in V$.
Assume $f\colon \kappa\rightarrow H_\kappa$ and $j(f)(\kappa)=x$.
Then for all transitive set $Y\subseteq X$ such that $x\in Y$, 
\[
j_Y(f)(\kappa)=j_X(f)(\kappa)=j(f)(\kappa)=x.
\]
\end{fact}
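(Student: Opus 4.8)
The plan is to unwind the definitions of the factor embeddings $j_Y$ and $j_X$ and compare their action on the pair $(f,\kappa)$ through the commuting triangle supplied by item (4) of the preceding list. The key point is that for transitive sets $Y \subseteq X$ with $\kappa \subseteq Y$ we have the natural embedding $i_{XY}\colon M_Y \to M_X$ satisfying $i_X \circ i_{XY} = i_Y$ and, more importantly for us, $i_{XY}(x) = x$ for all $x \in Y$. Since $f \in H_\kappa \subseteq Y$ (as $f$ is essentially coded below $\kappa$ and $\kappa \subseteq Y$, $Y$ transitive) and $\kappa \in Y$, applying $i_{XY}$ to the computation inside $M_Y$ will transport it to $M_X$ without moving the relevant objects.

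Concretely, I would argue as follows. By item (3), inside each ultrapower the critical ordinal is represented canonically: $\kappa = [\langle M\cap\kappa : M\in P_\kappa Y\rangle]_Y$ and likewise for $X$, and every element of the transitive domain is represented by the sequence of its images under the collapses $\pi_M$. In particular $j_Y(f)(\kappa)$ is an element of $M_Y$ computed from the representatives of $f$ and $\kappa$. Now apply the embedding $i_{XY}$. Since $i_{XY}$ is elementary and fixes the elements $f$ and $\kappa$ (both lie in $Y$, so $i_{XY}(f)=f$ and $i_{XY}(\kappa)=\kappa$ by the last clause of item (4)), and since $i_{XY}(j_Y(f)) = j_X(f)$ by the coherence $i_{XY}\circ j_Y = j_X$ implicit in $i_Y = i_X\circ i_{XY}$ together with $j = i_Y\circ j_Y = i_X\circ j_X$, we obtain
\[
i_{XY}\bigl(j_Y(f)(\kappa)\bigr) = j_X(f)(\kappa).
\]
But $x \in Y$ by hypothesis, so $i_{XY}(x) = x$; combined with $j_Y(f)(\kappa) = x$ this forces $j_X(f)(\kappa) = x$. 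The equality $j(f)(\kappa) = x$ is then immediate from $j = i_X \circ j_X$ and $\crit(i_X) > \lambda_X \geq \rank(x)$ (by item (3)), so $i_X$ fixes $x$; this is exactly the hypothesis we started from, giving the full chain $j_Y(f)(\kappa) = j_X(f)(\kappa) = j(f)(\kappa) = x$.

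The main thing to verify carefully, and the only place where anything can go wrong, is the claim $i_{XY}\circ j_Y = j_X$. This is not stated verbatim in item (4), which only records $i_Y = i_X\circ i_{XY}$ and that $i_{XY}$ fixes elements of $Y$; I would derive it from the uniqueness of the factor maps, using $j = i_Y \circ j_Y = i_X \circ j_X$ and the fact that the seed sequences defining $j_Y$ and $j_X$ are compatible under the coordinate projection $M \mapsto M \cap Y$ that defines $i_{XY}$ on equivalence classes. A clean way to see it is to check directly that $i_{XY}([\,\langle x : M\in P_\kappa Y\rangle\,]_Y) = [\,\langle x : M\in P_\kappa X\rangle\,]_X$ for the constant seed, which is precisely the statement $i_{XY}(j_Y(x)) = j_X(x)$; this follows by unwinding the definition of $i_{XY}$ on the constant function $\langle x : M\in P_\kappa Y\rangle$. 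Once this is in place the rest is pure diagram chasing, and the transitivity and inclusion hypotheses on $Y$ and $X$ together with $x \in Y$ are exactly what make every arrow fix the relevant seeds.
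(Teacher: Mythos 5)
Your overall route is the intended one: the paper offers no written proof of this Fact (it is stated as an immediate consequence of the four listed properties of the factor maps), and your diagram chase through $i_{XY}$ and $i_X$ uses exactly those properties. The identity $i_{XY}\circ j_Y=j_X$ that you single out as the crux is indeed the only thing requiring verification, and your verification via the constant function $\langle f: M\in P_\kappa Y\rangle$ is correct.

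However, as written your argument runs in the wrong logical direction: you derive $j_X(f)(\kappa)=x$ from $j_Y(f)(\kappa)=x$, and then $j(f)(\kappa)=x$ from that, ending with ``this is exactly the hypothesis we started from.'' But $j(f)(\kappa)=x$ is the hypothesis and $j_Y(f)(\kappa)=x$ is the conclusion, so you have proved the converse of the Fact. The repair is one line, because every map in sight is an injective elementary embedding: from $i_X(j_X(f)(\kappa))=j(f)(\kappa)=x=i_X(x)$ (using that $i_X$ fixes every element of $X$, and $x\in Y\subseteq X$), injectivity of $i_X$ gives $j_X(f)(\kappa)=x$; then $i_{XY}(j_Y(f)(\kappa))=j_X(f)(\kappa)=x=i_{XY}(x)$ with $x\in Y$ gives $j_Y(f)(\kappa)=x$. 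You should also drop two false side-claims, neither of which you need: $f\colon\kappa\to H_\kappa$ is not an element of $H_\kappa$ (it has cardinality $\kappa$) and $H_\kappa$ need not be contained in $Y$, so $i_{XY}(f)=f$ cannot be justified by ``$f\in Y$'' --- but your constant-function computation of $i_{XY}(j_Y(f))=j_X(f)$ makes this unnecessary; likewise $\lambda_X\geq\rank(x)$ can fail for $x$ in a transitive set $X$, and the correct justification for $i_X(x)=x$ is the last clause of item (4) (equivalently, item (3)'s representation of elements of $X$ by their collapses).
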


Notice that all of the above observation do not subsume any relation between 
$V$ and $M$, i.e. possibly $M \nsubseteq V$ and $V\nsubseteq M$ and so everythig 
said so far applies to embeddings induced by normal measures in $V$ as well as to embeddings 
induced by the stationary tower forcing over $V$. 

\subsection{A brief account of the stationary tower}\label{subsec.WOOST}

Given a strongly inaccessible cardinal $\delta$, $\mathbb{P}_\delta$ 
is the poset whose conditions are $S\in H_\delta$ such that $S\subseteq P(\cup S)$ is stationary in $P(\cup S)$. 

$S\leq T$ iff $\cup T\subseteq \cup S$ and $S\restriction\cup T$ is a subset of $T$.

Let $G$ be $V$-generic for $\mathbb{P}_\delta$. 
Let $R$ be $\in$ or $=$, $\dom(f)=P(X)$ and $\dom(g)=P(Y)$ with $X,Y\in H_\delta$, then 
\[
f \mathrel{R_G} g \iff 
\{t\subseteq X\cup Y: f(t\cap X) \mathrel{R} g (t\cap Y)\}\in G.
\]
Define $M_G=\{[f]_G:  f\in V,\, f:P(X)\to V,\,  X\in H_\delta\}$ where 
\[
[f]_G=\{ g\in V: f \mathrel{=_G} g\text{ and $g$ has minimal rank}\} .
\]
With abuse of notation we shall say that 
\[
[f]_G \mathrel{R_G} [g]_G \text{ if } f \mathrel{R_G} g.
\]
With these definition it can be seen that:
\begin{enumerate}[label={\upshape (\alph*)}]
\item 
$M_G$  is the direct limit of the ultrapowers $M_X=V^{P_\kappa X}/ ( G\restriction X )$
under the embeddings $j_{XY}$ defined for $Y\subseteq X$ by 
\[
j_{XY}([f]_Y)=[\langle f(M\cap Y):M\in P(X)\rangle ]_X,
\]
\item \label{eqn.WOOjG}
$j_X[X]=j_G[X]=[\id_{P ( X )}]_G\in M_G$ for all $X\in H_\delta$.
\item  \label{eqn.WOOjG2}
$M_G\models\phi([f_1]_G,\dots,[f_k]_G)$
iff for some $H_\gamma\supseteq \cup\bigcup\dom(f_1)\cup\dots\cup\bigcup\dom(f_k)$
\[
\{X\prec H_\gamma: V\models\phi(f_1(X\cap\bigcup\dom(f_1)),\dots,f_k(X\cap\bigcup\dom(f_k))\}\in G
\]

\end{enumerate}

Woodin has proved the following fundamental result~\cite[Theorem 2.5.8]{larson}:
\begin{theorem}
Assume $\delta$ is a Woodin cardinal and $G$ is $V$-generic for $\mathbb{P}_\delta$. 
Then $M_G\subseteq V[G]$ is well founded and $V[G]\models M_G^{<\delta}\subseteq M_G$.
\end{theorem}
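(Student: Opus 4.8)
The plan is to derive both conclusions from a single reflection argument driven by the strongness characterization of Woodinness: $\delta$ is Woodin iff for every $A\subseteq V_\delta$ there is $\kappa<\delta$ that is $<\delta$-$A$-strong, i.e.\ for every $\gamma<\delta$ there is an elementary $j\colon V\to N$ with $\crit(j)=\kappa$, $j(\kappa)>\gamma$, $V_\gamma\subseteq N$ and $j(A)\cap V_\gamma=A\cap V_\gamma$. First I would record that, by the \L{}o\'s-type criterion~\ref{eqn.WOOjG2}, $M_G\models\ZFC$ whenever $V\models\ZFC$; in particular $\langle M_G,\in_G\rangle$ satisfies Foundation as an internal statement.

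Granting this, I would reduce well-foundedness to closure under $\omega$-sequences. Suppose $\langle [f_n]_G:n<\omega\rangle$ were an $\in_G$-descending chain in $V[G]$. Once closure is known, this external sequence is coded by a single $[h]_G$ that $M_G$ regards as a function with domain $\check\omega$; then $M_G$ would think the range of $[h]_G$ is a nonempty set with no $\in$-minimal element, contradicting Foundation in $M_G$. Thus it suffices to prove $V[G]\models M_G^{<\delta}\subseteq M_G$.

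For the closure, fix $\beta<\delta$ and a sequence $s=\langle [f_\xi]_G:\xi<\beta\rangle\in V[G]$, with name $\dot s$ and a condition $S_0\in G$ forcing that $\dot s$ is such a sequence. The goal is a single $h\in V$ and a condition $T\le S_0$ in $G$ such that $M_G$ regards $[h]_G$ as a function coding $s$; by the \L{}o\'s criterion~\ref{eqn.WOOjG2} this amounts to arranging, for each $\xi$, that the value-correctness set is $G$-large, which is automatic from fineness of $G$ once the indexing is captured by $T$. To produce such a $T$ below an arbitrary extension of $S_0$ I would code $\dot s$, $S_0$ and $\beta$ into a set $A\subseteq V_\delta$ and apply Woodinness to obtain $\kappa<\delta$ that is $<\delta$-$A$-strong, together with a witness $j\colon V\to N$, $\crit(j)=\kappa$, $V_\gamma\subseteq N$ for $\gamma$ above the ranks of all the named objects, and $j(A)\cap V_\gamma=A\cap V_\gamma$. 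The agreement $j(A)\cap V_\gamma=A\cap V_\gamma$ transports the relevant stationarity computations between $V$ and $N$, and from $j\restriction V_\gamma$ one reads off a stationary ``diagonal'' condition $T\le S_0$ of the form $\{X\prec\langle V_\gamma,\in,A\cap V_\gamma\rangle:\dots\}$ that simultaneously captures the indices $\xi<\beta$ and the values $[f_\xi]_G$ for a suitable $h(X)=\langle f_\xi(X\cap\bigcup\dom f_\xi):\xi\in X\rangle$.

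The main obstacle is precisely this density/stationarity step: showing that the diagonal condition $T$ is a genuine stationary set and that the capturing conditions are dense below $S_0$. This is where Woodinness is indispensable and mere inaccessibility of $\delta$ fails. Because the generic filter $G$ is \emph{not} countably (let alone $<\delta$-)closed, one cannot simply intersect the $\beta$-many stationary sets witnessing the individual facts ``$[f_\xi]_G$ is represented'', and it is the $<\delta$-$A$-strongness of $\kappa$ that manufactures, below any extension of $S_0$, a single stationary $T$ reflecting all the relevant data at once. Once this density is established, closure follows, and then well-foundedness follows from the reduction in the second paragraph.
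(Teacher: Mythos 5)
The paper does not actually prove this statement: it quotes it as Woodin's theorem with a citation to Larson's book (Theorem 2.5.8 there), so your proposal can only be measured against the standard proof. Your architecture does match that proof --- establish closure under short sequences via a ``master condition'' built from Woodinness, then deduce well-foundedness from $\omega$-closure together with the fact that $M_G$ satisfies Foundation internally. But there is a genuine gap exactly where you flag ``the main obstacle'': the entire mathematical content of the theorem is the capturing lemma (Lemma 2.5.6 in Larson's book), which says that for any condition $S_0$ and any family $\langle D_\xi:\xi<\beta\rangle$, $\beta<\delta$, of predense sets below $S_0$, there are $\eta<\delta$ and a stationary $T\leq S_0$ with $\bigcup T=V_\eta$ such that every $X\in T$ captures $D_\xi$ for every $\xi\in X\cap\beta$ (i.e.\ some $d\in D_\xi\cap X$ has $X\cap\bigcup d\in d$). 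You assert that a $<\delta$-$A$-strong $\kappa$ ``manufactures'' such a $T$, but you never construct $T$ or verify its stationarity; that verification --- choosing $\eta$, defining $T$ from $V_\kappa$ and the agreement $j(A)\cap V_\eta=A\cap V_\eta$, and reflecting a putative club disjoint from $T$ through $j$ --- \emph{is} the proof, and nothing in your sketch substitutes for it.

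Two smaller points. First, the representatives $f_\xi$ of $\dot s(\xi)$ are only determined in $V[G]$, so $h$ cannot be $\langle f_\xi(X\cap\bigcup\dom f_\xi):\xi\in X\rangle$ for a $V$-sequence $\langle f_\xi\rangle$; if such a sequence existed in $V$, {\L}o\'s plus fineness and normality would give closure with no Woodinness at all. Instead one takes $D_\xi$ to be the predense set of conditions deciding a representative for $\dot s(\xi)$, and $h(X)$ must read off, for each $\xi\in X\cap\beta$, a $d\in D_\xi\cap X$ captured by $X$ together with the representative it decides; normality of $G$ then identifies $[h]_G(\xi)$ with $[f]_G$ for the $f$ decided by the member of $D_\xi\cap G$. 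Second, your well-foundedness reduction is slightly off: if $\omega^{M_G}$ were nonstandard, the range of a function with domain $\check\omega$ could have an internally $\in$-minimal element indexed by a nonstandard integer, and no contradiction with internal Foundation results. The standard fix is to use closure to realize the set $\{[f_n]_G:n<\omega\}$ itself as the exact $\in_G$-extension of a single element of $M_G$ (again via normality), after which internal Foundation immediately gives the contradiction.
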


We let $j\colon V\rightarrow M$ be a generic ultrapower embedding if $j=j_G$ 
for some $V$-generic filter $G$ for $\mathbb{P}_\delta$ where $\delta$ is a Woodin cardinal in $V$.

\subsection{Equivalent formulations of strong Laver diamond for a supercompact cardinal $\kappa$}
\label{subsec.LDeqJJ^}

In this section we shall see that if $\kappa$ is supercompact then $\mathrm{SLD}(\mathcal{J})$ 
holds for the class $\mathcal{J}$ of ultrapower embeddings induced by normal measures on $P_\kappa X$ 
and that this principle is equivalent to $\mathrm{SLD}(\mathcal{J}_\kappa)$ where $\mathcal{J}_\kappa$ 
is a class of embeddings induced by stationary towers below a stationary set of guessing models.
  
For the sake of completeness we begin with the proof of the existence of a strong Laver diamond 
at a supercompact cardinal $\kappa$.
This is what the ordinary proof of Laver diamond gives but it is not spelled out in the usual argument.

\begin{theorem}[Laver, \cite{Lav78}]
Assume $\kappa$ is supercompact. 
Let $\mathcal{J}$ be the class of elementary embeddings $j\colon V\rightarrow M$ with critical point 
$\kappa$ such that $M\subseteq V$. 
Then $\mathrm{SLD}(\mathcal{J})$ holds.
\end{theorem}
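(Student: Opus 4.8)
The plan is to show that the function $f$ delivered by Laver's original recursion is already a strong $\mathcal{J}$-Laver function; the only point to highlight beyond the classical argument is that, since the witnessing embeddings are fine supercompactness ultrapowers, the clause $j[V_\lambda]\in M$ is automatic. First I would fix a well-order $<^*$ of $H_\kappa$ and define $f\colon\kappa\to H_\kappa$ by recursion on $\alpha<\kappa$. Given $f\restriction\alpha$, ask whether there is a transitive set $X$ and some $x\in X$ such that no normal fine $\alpha$-complete ultrafilter $U$ on $P_\alpha(X)$ induces $j_U(f\restriction\alpha)(\alpha)=x$; if so, let $X_\alpha$ be one of least rank and set $f(\alpha)$ to be the $<^*$-least such $x$, and otherwise set $f(\alpha)=\emptyset$. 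As usual this recursion stays inside $H_\kappa$ and, by design, $f$ diagonalizes against every failure of the guessing property at each $\alpha$.

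The heart of the matter, and the step I expect to be the main obstacle, is the classical Laver guarantee: for every set $x$ and every transitive $X\ni x$ there is a normal fine $\kappa$-complete ultrafilter $U$ on $P_\kappa(X)$ with $j_U(f)(\kappa)=x$. Suppose it fails. Choosing a counterexample of least possible rank, fix the transitive set $X$ of least rank for which some $x\in X$ admits no such ultrafilter, and let $x$ be the $<^*$-least such witness. Using supercompactness, fix a normal fine ultrafilter $U_0$ on $P_\kappa(V_\mu)$ with $\mu$ large enough that $M_0=M_{U_0}$ is closed under $\mu$-sequences and correctly computes every ultrafilter on $P_\kappa(X)$ and its ultrapower; write $j_0=j_{U_0}\colon V\to M_0$, so $\crit(j_0)=\kappa$. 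The delicate computation is that $M_0$ evaluates $j_0(f)(\kappa)$ by running the image recursion at the ordinal $\kappa$: since $\crit(j_0)=\kappa$ we have $j_0(f)\restriction\kappa=f\restriction\kappa$ and $j_0(<^*)\restriction H_\kappa=<^*\restriction H_\kappa$, and the high closure of $M_0$ makes $M_0$'s failing pairs at $\kappa$ agree with $V$'s genuine failures at rank $\le\rank(X)$. Reflecting the minimality of $X$ and of $x$ into $M_0$ then forces $j_0(f)(\kappa)=x$. Finally, applying Fact~\ref{secLD.keyremark0} with $Y=X\subseteq V_\mu$ and $x\in Y$, the projected ultrafilter on $P_\kappa(X)$ induces an embedding $j$ with $j(f)(\kappa)=x$, contradicting the choice of $X$.

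It remains to read off strong $\mathcal{J}$-Laver diamond. Given any $x$ and any $\lambda>\rank(x)$, apply the guarantee with $X=V_\lambda$ to obtain a normal fine $\kappa$-complete ultrafilter $U$ on $P_\kappa(V_\lambda)$ with $j_U(f)(\kappa)=x$, and put $M=M_U$ and $j=j_U$. Because $U\in V$, the ultrapower satisfies $M\subseteq V$, and $\crit(j)=\kappa$, so $j\in\mathcal{J}$. Moreover, exactly as in~\ref{eqn.WOOjG}, fineness of $U$ gives $j[V_\lambda]=[\id_{P_\kappa(V_\lambda)}]_U\in M$. Thus $f$ witnesses the strong $\mathcal{J}$-Laver property for the pair $(x,\lambda)$, and since $x$ and $\lambda$ were arbitrary, $\mathrm{SLD}(\mathcal{J})$ holds. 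The genuinely delicate point is the reflection in the second paragraph; the passage from fineness to $j[V_\lambda]\in M$, which is what the ordinary proof leaves unspelled, is immediate.
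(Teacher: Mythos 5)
Your argument is correct and is essentially the paper's proof: the same Laver diagonalization defining $f(\alpha)$ from a minimal-rank failure of $f\restriction\alpha$, the same reflection of the minimality of the failing pair into a sufficiently closed ultrapower $M_0$ to force $j_0(f)(\kappa)=x$, and the same use of the projected measure together with Fact~\ref{secLD.keyremark0} (critical point of the factor map above $\rank(x)$) to reach the contradiction, plus the observation that fineness gives $j[V_\lambda]=[\id]\in M$. The only difference is organizational --- the paper assumes from the outset that no strong $\mathcal{J}$-Laver function exists and diagonalizes against all $f$, whereas you define $f$ first and then suppose this particular $f$ fails --- which does not change the substance of the argument.
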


\begin{proof}
Assume not and for each $f\colon \kappa\rightarrow H_\kappa$, let $\lambda_f$ 
be least such that for some $x_f\in V_{\lambda_f}$, $j_\mathcal{U}(f)(\kappa)\neq x_f$
for every $\theta\geq\lambda_f$ and every 
normal measure $\mathcal{U}$ on $P_\kappa V_{\theta}$.

We first notice that for any $f$ we might restrict our attention to normal measures on 
$V_{\lambda_f}$: to see this assume that $j_{\mathcal{U}}(f)(\kappa)=x_f$ for some  
normal measure $\mathcal{U}$ on $P_\kappa V_{\theta}$.
Then $j_{\mathcal{V}}(f)(\kappa)=x_f$ by Fact~\ref{secLD.keyremark0},
where \( \mathcal{V} =\mathcal{U}\restriction V_{\lambda_f}\).

Let $\phi ( g , \delta )$ hold if $g\colon \alpha\rightarrow H_\alpha$ and $\delta$ 
is the least $\gamma$ such that for some $x\in V_\gamma$, $j_\mathcal{E}( g ) ( \alpha ) \neq x$ 
for every normal measure $\mathcal{E}$ on $P_\alpha\gamma$.

Let $f:\kappa\rightarrow H_\kappa$ be defined as follows:
$f ( \alpha ) = 0$ if for no $\delta$, $\phi ( f \restriction  \alpha , \delta ) $ holds, else
$f ( \alpha )$ is some $x_\alpha\in V_\delta$ that witnesses that $\phi(f\restriction\alpha,\delta)$ 
holds for some $\delta$.

Let $\theta$ be large enough so that $\lambda_f\in V_\theta$ 
for all $f\colon \kappa\rightarrow H_\kappa$.
Now let $\mathcal{E}$ be a normal measure on $P_\kappa V_\theta$. 
Notice that $j[V_\theta]\in M_{\mathcal{E}}$, $V_\theta\subseteq M_{\mathcal{E}}$ and 
$\phi ( g , \lambda_g)$ holds in $V_\theta$ for all $g \colon \kappa\rightarrow V_\kappa$. 
Thus we get that $\phi(g,\lambda_g)$ holds in $M$ for all such $g$. 

By definition of $f$, we get that in $M_{\mathcal{E}}$, $j_\mathcal{E}( f ) (\kappa)$ 
is some $x$ of least rank such that for any measure 
$\mathcal{U}\in M$ on $(P_\kappa V_{\lambda_f})^M$, $j_\mathcal{U}(f)( \kappa )  \neq x$.

Notice that $M$ is closed under $| V_\theta |$-sequences and that for every 
$\gamma<\theta$, $V_\gamma\subseteq M$. 
Let $\mathcal{E}^*$ be the projection of $\mathcal{E}$ on $V_{\lambda_f}$. 
Then $\mathcal{E}^*\in V_{\lambda_f+2}\subseteq M$ and thus it is a normal measure in $M$. 
Notice that $j_{\mathcal{E}}=k\circ j_{\mathcal{E}^*}$ with $\crit(k)>\lambda_f$.
Since $x\in V_{\lambda_f}$, this means that  
\[
j_{\mathcal{E}^*}(f)(\kappa)=j_{\mathcal{E}}(f)(\kappa)=x
\] 
contradicting the choice of $x$ as a witness of $\phi(f,\lambda_f)$ in $M$.
\end{proof}

Let:
\begin{align*}
 \mathrm {G}_\kappa^\delta(\mathfrak{R}) &= \{M\prec R: M\text{ is a $\delta$-guessing model and }\kappa_M=\kappa\} 
\\
\mathrm {G}_\kappa(\mathfrak{R}) &=  \{M\prec R: M\text{ is a guessing model and }\kappa_M=\kappa\} 
\end{align*}

\begin{definition} \label{def.JkappaLD}
Assume $\mathrm{G}_\kappa(H_\lambda)$ is stationary for all 
$\lambda\geq\kappa$, $\mathcal{J}_\kappa$ is the family of generic ultrapower embeddings 
$j \colon V\rightarrow{M}$ defined as follows:

$j\in\mathcal{J}_\kappa$ if there is $G$ such that:

\begin{itemize}
\item 
$G$ is a $V$-generic filter for the full stationary tower on some Woodin cardinal $\delta>\kappa$,
\item 
$\mathrm {G}_\kappa(H_\theta)\in G$ for  some regular $\theta\in (\kappa,\delta)$
\item 
$j = j_{H_\theta}$ where 
\[
j_{H_\theta}\colon V\rightarrow V^{P(P(H_\theta))}/ ( G\restriction H_\theta)
\] 
is the canonical embedding induced by $G\restriction H_\theta$.
\end{itemize}
\end{definition}

We have all the elements to state the main result of this section:

\begin{theorem} \label{secLD.thmeqLD}
Assume $\kappa$ is supercompact and there are class many Woodin cardinals.
Let $\mathcal{J}$ be the family of ultrapower embeddings induced by normal measures
on $P_\kappa X$ for some set $X$. 
Then the following are equivalent:
\begin{enumerate}[label={\upshape (\arabic*)}] 
\item \label{secLD.thmeqLD-1}
$f$ is a strong $\mathcal{J}$-Laver function. 
\item  \label{secLD.thmeqLD-2}
$f$ is a strong $\mathcal{J}_\kappa$-Laver function.
\item  \label{secLD.thmeqLD-3}
$\{M\in\mathrm {G}_\kappa^{H_\theta}: \pi_M [ x ] = f ( M \cap \kappa ) \}$ 
is stationary for all $\theta \geq \kappa$.
\end{enumerate}
\end{theorem}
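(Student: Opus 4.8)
The plan is to reduce all three clauses to a single statement about the guessing filter: for an embedding $j$ arising from $\mathcal{J}$ or from $\mathcal{J}_\kappa$, the equality $j(f)(\kappa)=x$ should hold exactly when the set $S_{x,\theta}:=\{M\in\mathrm{G}_\kappa(H_\theta):\pi_M[x]=f(M\cap\kappa)\}$ is, respectively, measure-one or stationary. First I would record the bridging computation. For a guessing model $M$ with $\kappa_M=\kappa$, the set $M\cap\kappa$ is transitive, hence an ordinal $\bar\kappa<\kappa$, and $\bar\kappa=\crit(j_M)=\pi_M(\kappa)$. Whether we take a normal measure $\mathcal{U}$ on $P_\kappa H_\theta$ or a stationary-tower generic $G$ with $\mathrm{G}_\kappa(H_\theta)\in G$, the seed $[\mathrm{id}]=j[H_\theta]$ is, in the target, a guessing model with $\kappa_{j[H_\theta]}=j(\kappa)$ and $j[H_\theta]\cap j(\kappa)=\kappa$, so the inverse collapse of the seed has critical point $\kappa$. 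Using the representations of Sections~\ref{subsec.elemb} and~\ref{subsec.WOOST} one gets $\kappa=[M\mapsto M\cap\kappa]$ and, for $x\in H_\theta$, $x=[M\mapsto\pi_M[x]]$ (recall $\pi_M[x]=\pi_M(x)$); hence $j(f)(\kappa)=[M\mapsto f(M\cap\kappa)]$, and by the {\L}o\'s theorem $j(f)(\kappa)=x$ iff $\{M:\pi_M[x]=f(M\cap\kappa)\}$ lies in $\mathcal{U}$ (resp. in $G$). Since every such $\mathcal{U}$ and $G$ concentrates on $\mathrm{G}_\kappa(H_\theta)$ — on the $\aleph_0$-guessing models by Magidor's Lemma~\ref{lem.isotype0guesmod} — this is the same as $S_{x,\theta}$ lying in $\mathcal{U}$, resp. $G$. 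Finally $V_\lambda\subseteq H_\theta$ turns $j[H_\theta]\in M$ into $j[V_\lambda]\in M$, so the closure clause in both Laver properties is automatic as soon as $\theta\ge\rank(x)$.

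With this in place, the stationary-tower equivalence \ref{secLD.thmeqLD-2}$\Leftrightarrow$\ref{secLD.thmeqLD-3} is immediate: a condition belongs to some generic filter iff it is stationary, and $S_{x,\theta}\subseteq\mathrm{G}_\kappa(H_\theta)$ makes it a legal condition forcing $\mathrm{G}_\kappa(H_\theta)\in G$, so forcing below it (with $\theta\in(\kappa,\delta)$ for a Woodin $\delta$) produces $j_{H_\theta}\in\mathcal{J}_\kappa$ with $j_{H_\theta}(f)(\kappa)=x$; conversely membership in a generic forces stationarity.

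The implication \ref{secLD.thmeqLD-1}$\Rightarrow$\ref{secLD.thmeqLD-3} is then routine: a normal measure refines the club filter, so $S_{x,\theta}\in\mathcal{U}$ makes $S_{x,\theta}$ stationary. To reach an arbitrary target $\theta$ I would take the witnessing measure on some $P_\kappa H_{\theta^*}$ with $\theta^*\ge\theta$ (this is where I use that \ref{secLD.thmeqLD-3} is asserted for \emph{all} $\theta$, and that the Laver property lets $\lambda$, hence $\theta^*$, be as large as we wish) and project the stationary set $S_{x,\theta^*}$ down to $P_\kappa H_\theta$, invoking the downward reflection of guessing models: if $M^*\in S_{x,\theta^*}$ then $M^*\cap H_\theta$ is again a guessing model with the same $\kappa_{(\cdot)}=\kappa$, the same trace $M^*\cap\kappa$, and $\pi_{M^*\cap H_\theta}[x]=\pi_{M^*}[x]$, so the projection lands inside $S_{x,\theta}$, which is therefore stationary.

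The main obstacle is the converse \ref{secLD.thmeqLD-3}$\Rightarrow$\ref{secLD.thmeqLD-1} (equivalently \ref{secLD.thmeqLD-2}$\Rightarrow$\ref{secLD.thmeqLD-1}): manufacturing a genuine \emph{ground-model} normal measure from the mere stationarity of the $S_{x,\theta}$. This is delicate precisely because an arbitrary stationary subset of $P_\kappa H_\theta$ need not be measure-one for any normal measure, so one cannot simply "measure" $S_{x,\theta}$; it is here that the supercompactness of $\kappa$ must be used in an essential way, together with the rigidity of $\aleph_0$-guessing models. The approach I would take is to fix $x,\lambda$, choose $\theta\ge\lambda$, and reflect the coherent tower $\langle S_{x,\theta}\rangle_\theta$ through a supercompactness embedding $j_{\mathcal{W}}$: since an $\aleph_0$-guessing model collapses exactly onto $H_\theta$ (Lemma~\ref{lem.isotype0guesmod}), the seed $j_{\mathcal{W}}[H_\theta]$ has inverse collapse with critical point $\kappa$ and satisfies $\pi_{j_{\mathcal{W}}[H_\theta]}[j_{\mathcal{W}}(x)]=x$, so that $j_{\mathcal{W}}[H_\theta]\in j_{\mathcal{W}}(S_{x,\theta})$ is equivalent to $j_{\mathcal{W}}(f)(\kappa)=x$; the derived measure $\mathcal{U}=\{A\in V:j_{\mathcal{W}}[H_\theta]\in j_{\mathcal{W}}(A)\}$ then lies in $V$ and is the required normal measure. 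Arranging the supercompactness embedding so that its seed actually sees the stationary set $S_{x,\theta}$ for the prescribed $x$ — i.e.\ converting the stationary, tower-theoretic data into an honest normal measure in $V$ — is the crux of the theorem and the step I expect to consume the full strength of the hypothesis that $\kappa$ is supercompact.
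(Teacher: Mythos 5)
Your reductions of \ref{secLD.thmeqLD-2}$\Leftrightarrow$\ref{secLD.thmeqLD-3} and of \ref{secLD.thmeqLD-1}$\Rightarrow$\ref{secLD.thmeqLD-3} are essentially sound (the latter even bypasses the paper's detour through Burke's theorem on representing normal ideals as projections of the non-stationary ideal, which is how the paper passes from \ref{secLD.thmeqLD-1} to \ref{secLD.thmeqLD-2} directly). But the direction \ref{secLD.thmeqLD-3}$\Rightarrow$\ref{secLD.thmeqLD-1}, which you correctly identify as the crux, is left genuinely open in your write-up: your plan is to find a supercompactness embedding $j_{\mathcal{W}}$ whose seed $j_{\mathcal{W}}[H_\theta]$ lies in $j_{\mathcal{W}}(S_{x,\theta})$, but for a normal measure $\mathcal{W}$ on $P_\kappa H_\theta$ this holds if and only if $S_{x,\theta}\in\mathcal{W}$ --- which is exactly the statement to be proved and which, as you yourself note, does not follow from mere stationarity of $S_{x,\theta}$. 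As written, the proposal does not close the loop.

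The missing idea is Proposition~\ref{prop.GMtoNorm}: every $\aleph_0$-guessing model $M\prec H_\theta$ with $\kappa_M=\kappa$ \emph{contains} a normal measure $\mathcal{U}_M\in M$ on $P_\kappa H_\lambda$ such that $M\cap H_\lambda\in\bigcap(\mathcal{U}_M\cap M)$; one obtains it by deriving a measure from $j_M=\pi_M^{-1}$ over the transitive collapse of $M$ and pushing it forward with $j_M$. With this in hand the paper forces with the stationary tower below $\mathrm{G}_\kappa^{H_\theta}$ and presses down on the regressive map $M\mapsto\mathcal{U}_M$: normality of the generic ultrafilter yields a single $\mathcal{U}\in V$ with $\{M:\mathcal{U}_M=\mathcal{U}\}\in G$, and since each such $M$ satisfies $M\cap H_\lambda\in A$ for every $A\in\mathcal{U}\cap M$ (and club-many $M$ contain any fixed $A$), every $A\in\mathcal{U}$ lifts into $G$. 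Hence $\mathcal{U}$ \emph{is} the projection $G\restriction H_\lambda$, and $j_{\mathcal{U}}(f)(\kappa)=j_G(f)(\kappa)=x$ by Fact~\ref{secLD.keyremark0}, with $\mathcal{U}$ a genuine ground-model normal measure. This is the device that converts the stationary, tower-theoretic data into a witness for \ref{secLD.thmeqLD-1}; without it (or an equivalent argument) the implication \ref{secLD.thmeqLD-3}$\Rightarrow$\ref{secLD.thmeqLD-1} remains unproved, and supercompactness alone, used only to produce some embedding with the wrong seed, will not supply it.
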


We start with the proof that~\ref{secLD.thmeqLD-1} implies~\ref{secLD.thmeqLD-2}.
\begin{proof}
Recall the following result of Burke~\cite[Lemma 3.1 ]{Bur97}:

\begin{lemma}
Assume $\mathcal{I}\subseteq P(P(X))$ is a normal ideal. 
Let $\breve{\mathcal{I}}$ denote the dual filter. 
There is $S _{\breve{\mathcal{I}}}$ stationary subset of 
$P(2^{2^{|X|}})^+)$ such that $\mathcal{I}$ is the projection of the non-stationary 
ideal restricted to $\mathrm {S}_{\breve{\mathcal{I}}}$.
\end{lemma}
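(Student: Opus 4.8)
The plan is to exhibit $S_{\breve{\mathcal I}}$ explicitly as a ``self-catching'' family and then verify the two inclusions that constitute the projection identity. I would fix $\lambda=(2^{2^{|X|}})^+$, so that $\lambda$ is regular, $\mathcal I,\breve{\mathcal I}\in H_\lambda$, and $|P(P(X))|<\lambda$, and fix once and for all an injection $B\mapsto c_B<\lambda$ coding each $B\in P(P(X))$ by an ordinal. I then set
\[
S_{\breve{\mathcal I}}=\{z\in P(\lambda):z\cap X\in B\text{ for every }B\in\breve{\mathcal I}\text{ with }c_B\in z\},
\]
the set of those $z$ whose trace $z\cap X$ lies in every filter set coded inside $z$. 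Unwinding the projection and lift operators of Section~\ref{subsect.notation}, the theorem reduces to the single assertion that for every $A\subseteq P(X)$ one has $A\in\mathcal I$ if and only if the lift $A^\lambda\cap S_{\breve{\mathcal I}}$ (where $A^\lambda=\{z\in P(\lambda):z\cap X\in A\}$) is nonstationary in $P(\lambda)$; this is exactly the statement that $\mathcal I$ is the projection of $\NS\restriction S_{\breve{\mathcal I}}$.

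The forward direction is immediate and uses only that $\breve{\mathcal I}$ is a filter. If $A\in\mathcal I$ then $B\coloneqq P(X)\setminus A\in\breve{\mathcal I}$, and $C\coloneqq\{z\in P(\lambda):c_B\in z\}$ is a club in $P(\lambda)$ because $c_B$ is a single fixed ordinal. By the defining property of $S_{\breve{\mathcal I}}$, every $z\in C\cap S_{\breve{\mathcal I}}$ satisfies $z\cap X\in B$, hence $z\cap X\notin A$; thus $A^\lambda\cap S_{\breve{\mathcal I}}\cap C=\emptyset$, and since $C$ is club this shows $A^\lambda\cap S_{\breve{\mathcal I}}$ is nonstationary. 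No normality is invoked here.

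The remaining, and principal, task is the converse: if $A$ is $\mathcal I$-positive then $A^\lambda\cap S_{\breve{\mathcal I}}$ is stationary in $P(\lambda)$. Taking $A=P(X)$, which is always positive, simultaneously yields the stationarity of $S_{\breve{\mathcal I}}$ itself, so both missing pieces are a single lemma. I would obtain it either combinatorially or by a generic ultrapower. Combinatorially, given any algebra $g$ on $\lambda$ I would build a continuous $\subseteq$-increasing elementary chain of substructures of $(H_\lambda,\in,<^*,\mathcal I,A,g)$ and, at a limit of suitable cofinality, invoke \emph{normality} of $\mathcal I$ --- i.e. closure of $\breve{\mathcal I}$ under diagonal intersections $\triangle_{x\in X}B_x$ --- to force the limiting trace $z\cap X$ simultaneously into $A$ and into all of the at most $|z|$-many filter sets it codes, producing $z\in\Cl_g\cap S_{\breve{\mathcal I}}$ with $z\cap X\in A$. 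Alternatively, forcing below $A$ with the $\mathcal I$-positive sets gives $j\colon V\to M$ whose generic object catches every ground-model club and every $\breve{\mathcal I}$-set, so $j$ reflects the existence of the desired $z$ back to $V$ by elementarity. The main obstacle is precisely this positive direction: the catching constraints on $z\cap X$ grow together with $z$, so the diagonalization must be closed at exactly the right (limit) stage, and this is the only point where the full strength of ``normal'' --- rather than merely ``fine and $\omega_1$-complete'' --- is used. This is the heart of Burke's lemma.
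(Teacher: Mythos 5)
The paper does not actually prove this lemma --- it is quoted from Burke \cite{Bur97} --- and the only content the paper adds is the explicit witness: $S_{\breve{\mathcal I}}$ is the set of $M\prec H_\theta$ (for $\theta=(2^{2^{|X|}})^+$ least possible) with $M\cap X\notin T$ for all $T\in M\cap\mathcal I$. Your set is the same one up to a club (restrict to $z$ elementary in a structure carrying the coding $B\mapsto c_B$), your reduction of ``projection'' to the biconditional ``$A\in\mathcal I$ iff $A^\lambda\cap S_{\breve{\mathcal I}}$ is nonstationary'' is the right reading, and your forward direction is complete and correct.

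The converse, which you rightly call the entire content, is where there is a genuine gap: neither mechanism you describe works as stated. A continuous increasing chain closed off at a limit lands in every club of $P(\lambda)$, but nothing steers its trace $z\cap X$ into $A$, which is merely $\mathcal I$-positive and need not contain, or even meet in a controllable way, the set of traces a single chain can realize; and ``the at most $|z|$-many filter sets it codes'' cannot be handled by any completeness of $\breve{\mathcal I}$, since $|z|$ may be as large as $2^{2^{|X|}}$ while normality only yields diagonal intersections indexed by $X$ (equivalently by $[X]^{<\omega}$). The argument that works is a Skolem-hull/Fodor argument: expand to a structure $\mathfrak A=(H_\lambda,\in,<^*,X,\mathcal I,A,F,c)$, let $N_x$ be the Skolem hull of $x$ for $x\in P(X)$, and note that $C_0=\{x:N_x\cap X=x\}$ contains a club of $P(X)$; since a normal fine filter on $P(X)$ contains the club filter, $A\cap C_0$ is $\mathcal I$-positive. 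If every $x\in A\cap C_0$ had some $B_x\in N_x\cap\breve{\mathcal I}$ with $x\notin B_x$, write $B_x=\tau^{\mathfrak A}(\bar a_x)$ with $\bar a_x\in[x]^{<\omega}$; stabilize the term $\tau$ on a positive set (countably many terms, and normal fine ideals are countably complete) and then press down finitely often on $\bar a_x$ using normality, obtaining a single $B\in\breve{\mathcal I}$ disjoint from an $\mathcal I$-positive set --- a contradiction. So normality enters as Fodor applied to finite parameter tuples, not as closure at limit stages. Your generic-ultrapower alternative has the analogous defect: the reflecting object in $M=\mathrm{Ult}(V,G)$ cannot be $j[\lambda]$, which is not available in $M$ for generic ultrapowers by ideals on $P(X)$; one must again take $z=j[N_X]$ for $N_X$ the Skolem hull of $X$, which lies in $M$ because it is the hull of $j[X]\in M$ inside $j(\mathfrak A)$.
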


In particular if $\theta$ is regular and such that $\mathcal{I}\in H_\theta$, 
an $S _{\breve{\mathcal{I}}}$ which witnesses the lemma for 
$\mathcal{I}$ is the set of $M\prec H_\theta$ such that $M\cap X\not\in T$ 
for all $T\in M\cap\mathcal{I}$.

In the sequel we shall assume that $S _{\breve{\mathcal{I}}}$ 
is the above stationary set where $\theta=\theta_{\breve{\mathcal{I}}}$ is chosen least possible.

\begin{fact}\label{secLD.keyremark1}
Assume:
\begin{enumerate}[label={\upshape (\alph*)}] 
\item 
$\mathcal{U}\in V$ is a normal measure on $P_\kappa H_\lambda$,
\item 
$\delta>\lambda$ is a Woodin cardinal in $V$,
\item 
$j_G\colon V\rightarrow M_G$ is the generic embedding induced by a $V$-generic 
filter for the full stationary tower up to $\delta$ such that  $S _{\mathcal{U}}\in G$.
\end{enumerate}
Then
$\mathcal{U}=G\restriction H_\lambda$, $j_G=k\circ j_\mathcal{U}$ 
where 
$j_\mathcal{U}\colon V\rightarrow M_{\mathcal{U}}$ is the ultrapower embedding induced 
by $\mathcal{U}$ and $\crit ( k ) > \lambda$.
\end{fact}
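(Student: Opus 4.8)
The plan is to prove the Fact in two movements: first to identify the projection $G\restriction H_\lambda$ with the measure $\mathcal{U}$, and then to read off both the factorization $j_G=k\circ j_\mathcal{U}$ and the bound $\crit(k)>\lambda$ from the way ordinals are represented in the tower ultrapower. Throughout I would write $\mathcal{I}$ for the dual ideal of $\mathcal{U}$ on $P_\kappa H_\lambda$, set $\mu=(2^{2^{|H_\lambda|}})^+$, and recall that the witnessing condition is $S_\mathcal{U}=\{M\prec H_\theta : M\cap H_\lambda\notin T \text{ for all } T\in M\cap\mathcal{I}\}$ with $\bigcup S_\mathcal{U}=\mu$.

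For the first movement I would show, for every $R\in P(P_\kappa H_\lambda)\cap V$, that $R\in\mathcal{U}$ iff the lift $R^\mu\in G$; since both $\mathcal{U}$ and $G\restriction H_\lambda$ are $V$-ultrafilters this gives $G\restriction H_\lambda=\mathcal{U}$. For the forward direction fix $R\in\mathcal{U}$ and consider $C_R=\{M\in S_\mathcal{U}:R\in M\}$. As $\{M\prec H_\theta:R\in M\}$ is a club, $C_R$ is a condition below $S_\mathcal{U}$ and, clubs being measure one for the tower, $C_R\in G$ whenever $S_\mathcal{U}\in G$. For $M\in C_R$ we have $P_\kappa H_\lambda\setminus R\in M\cap\mathcal{I}$ by maximality of $\mathcal{U}$, so the defining property of $S_\mathcal{U}$ forces $M\cap H_\lambda\in R$; hence $C_R\leq R^\mu$ and $R^\mu\in G$ by upward closure. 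For the converse, if $R\notin\mathcal{U}$ then $P_\kappa H_\lambda\setminus R\in\mathcal{U}$, so $(P_\kappa H_\lambda\setminus R)^\mu\in G$, and by fineness this lift is incompatible with $R^\mu$ (two disjoint stationary sets with common union $\mu$ have no lower bound), whence $R^\mu\notin G$.

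For the second movement I would note that $\mathcal{U}$ concentrates on $P_\kappa H_\lambda$, so in the tower ultrapower every $f\colon P(H_\lambda)\to V$ is determined modulo $G\restriction H_\lambda$ by its restriction to a $\mathcal{U}$-one set; consequently $M_{H_\lambda}=V^{P(H_\lambda)}/(G\restriction H_\lambda)$ coincides with the supercompactness ultrapower $M_\mathcal{U}=V^{P_\kappa H_\lambda}/\mathcal{U}$ and the factor $j_{H_\lambda}$ equals $j_\mathcal{U}$. Taking $k$ to be the direct-limit map $i_{H_\lambda}\colon M_{H_\lambda}\to M_G$ then yields $j_G=i_{H_\lambda}\circ j_{H_\lambda}=k\circ j_\mathcal{U}$. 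Finally, since $H_\lambda$ is transitive, $\kappa\subseteq H_\lambda$ and $\sup(H_\lambda\cap\Ord)=\lambda$, every ordinal $\alpha\leq\lambda$ is represented in $M_{H_\lambda}$ by the order-type function $M\mapsto\otp(M\cap\alpha)$, exactly as in point (3) of Section~\ref{subsec.elemb} (the identity $j_G[H_\lambda]=[\id_{P(H_\lambda)}]_G$ of Section~\ref{subsec.WOOST} being the engine); the same computation shows $k$ fixes all of these ordinals, so $\crit(k)>\lambda$.

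I expect the main obstacle to be the first movement: pinning down $G\restriction H_\lambda=\mathcal{U}$ requires care with the ``$R\in M$'' restriction (one only controls $M\cap H_\lambda$ relative to sets that belong to $M$) and with the interaction between the tower ordering, the lift operation $R\mapsto R^\mu$, and the fact that clubs lie in any generic $G$. By contrast the factorization and the critical-point bound should be routine once the two ultrapowers are seen to coincide.
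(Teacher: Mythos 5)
Your proposal is correct and follows essentially the same route as the paper: identify $G\restriction H_\lambda$ with $\mathcal{U}$ using the Burke-style description of $S_{\mathcal{U}}$ together with the fact that clubs (and projections of conditions in $G$) land in $G$, and then read off the factorization $j_G=i_{H_\lambda}\circ j_{H_\lambda}$ and $\crit(i_{H_\lambda})>\lambda$ from the general ultrapower facts of Section~\ref{subsec.elemb}. The only difference is one of economy: the paper simply cites that the club filter restricted to $S_{\mathcal{U}}$ projects to $\mathcal{U}$, whereas you re-verify that instance by hand via the sets $C_R$ and the lift/disjointness argument, which is fine.
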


\begin{proof}
$S=S _{\mathcal{U}}\in G$ iff $S\cap C\in G$ for all $C$ clubsets of $P(\cup S)$.
Moreover if $T\in G$, then $T\restriction H_\lambda\in G$ for all stationary sets $T$ 
such that $H_\lambda\subseteq \cup T$. Since the club filter restricted to $S$ projects to 
$\mathcal{U}$, we can conclude that $\mathcal{U}= G\restriction H_\lambda$ and we are done.
\end{proof}
Now let $f:\kappa\to H_\kappa$ be a strong $\mathcal{J}$-Laver function. 
Given $x\in H_\lambda$,
find a normal measure $\mathcal{U}\in V$ on some $P_\kappa H_\lambda$ such that 
$j_\mathcal{U}(f)(\kappa)=x$.

Let $\delta>\lambda$ be a Woodin cardinal and $G$ be generic for $\mathbb{P}_\delta$ such that
$S=S _{\mathcal{U}}\in G$. Then $j_{G\restriction H_\lambda}\in \mathcal{J}_\kappa$ and
\[
j_{G\restriction H_\kappa}(f)(\kappa)=j_{\mathcal{U}}(f)(\kappa)=x
\] 
by Fact~\ref{secLD.keyremark0}.
This concludes the proof that~\ref{secLD.thmeqLD-1} implies~\ref{secLD.thmeqLD-2}.
\end{proof}

We now show that~\ref{secLD.thmeqLD-2} implies~\ref{secLD.thmeqLD-1}.
\begin{proof}
We shall need the following result:

\begin{proposition}\label{prop.GMtoNorm}
Assume that for some regular $\theta$, $M\prec H_\theta$ is an $\aleph_0$-guessing model. 
Then for all regular $\lambda\in M$ such that $|P(P(H_\lambda))|<\theta$, 
there is $\mathcal{U}\in M$ normal measure on $P_\kappa H_\lambda$ such that 
$M \cap H_\lambda \in \bigcap ( \mathcal{U} \cap M )$.
\end{proposition}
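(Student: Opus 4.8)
The key idea is that the trace $s:=M\cap H_\lambda$ will serve as the common ``seed'' of the measure we build: being $\aleph_0$-guessing is exactly what lets us reflect down into $M$ the externally defined filter that concentrates on $s$. Throughout write $\kappa=\kappa_M$.

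First I would check that $s=M\cap H_\lambda\in P_\kappa H_\lambda$. Since $M$ is $\aleph_0$-guessing, $\kappa$ is strongly inaccessible by Proposition~\ref{prop.basiguessing}\ref{prop.basiguessing-5}, and the transitive collapse $\pi_M$ carries $H_\lambda$ to $H_{\bar\lambda}$ with $\bar\lambda=\pi_M(\lambda)$; thus $|M\cap H_\lambda|=|H_{\bar\lambda}|$, which is $<\kappa$ using that $\pi_M(\lambda)<\kappa$ and $\kappa$ is a strong limit. I would also record that, since $|P(P(H_\lambda))|<\theta$, both $P_\kappa H_\lambda$ and $Y:=P(P_\kappa H_\lambda)$ lie in $H_\theta$ and, being definable from the parameters $\kappa,H_\lambda\in M$, lie in $M$.

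The heart of the argument is to produce $\mathcal U\in M$. Consider the externally defined family
\[
d:=\{A\in M\cap Y: s\in A\}\subseteq M\cap Y .
\]
Because finite subsets of $M$ are themselves members of $M$, for every finite $Z\in M$ we have $d\cap Z\subseteq Z\subseteq M$ finite, hence $d\cap Z\in M$; so $d$ is a $(\aleph_0,M)$-approximated subset of $Y\in M$. By the $\aleph_0$-guessing property $d$ is therefore $M$-guessed: there is $e\in M\cap P(Y)$ with $e\cap M=d\cap M=d$, the last equality holding because $d\subseteq M$. Put $\mathcal U:=e\in M$. By construction $\mathcal U\cap M=d=\{A\in M\cap Y:s\in A\}$, so $s\in A$ for every $A\in\mathcal U\cap M$; this is exactly $M\cap H_\lambda\in\bigcap(\mathcal U\cap M)$, once we know $\mathcal U$ is a measure on $P_\kappa H_\lambda$.

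It remains to verify that $\mathcal U=e$ is a normal, fine, $\kappa$-complete ultrafilter on $P_\kappa H_\lambda$. Each such property is a first-order assertion about $e$ with parameters $\kappa,H_\lambda\in M$, so by $M\prec H_\theta$ it suffices to check that $M$ satisfies it, and for that one only needs the instances whose parameters range over $M$ --- which is precisely what the seed $s$ delivers, since for $A\in M\cap Y$ one has $A\in\mathcal U\cap M$ iff $s\in A$. Concretely: $s\in A\cap B$ whenever $s\in A,s\in B$ gives the filter property and $s\in P_\kappa H_\lambda$ (by the first step) gives properness; for each $x\in M\cap H_\lambda$ one has $x\in s$, giving fineness; for a regressive $f\in M$ one has $f(s)\in s\subseteq M$, so the constant value $x:=f(s)$ lies in $M$ and $\{z:f(z)=x\}\in\mathcal U\cap M$, giving normality; and for a sequence $\langle A_\xi:\xi<\eta\rangle\in M$ with $\eta<\kappa$ --- note $\eta\in M\cap\kappa$, whence $\eta\subseteq M$ and each $A_\xi\in M$ --- one has $s\in\bigcap_{\xi<\eta}A_\xi$, giving $\kappa$-completeness. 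In every case the witnessing set is definable from parameters in $M$ and contains $s$, hence lies in $\mathcal U\cap M$, so $M$ models the corresponding axiom and, by elementarity, it holds in $H_\theta$ and thus genuinely.

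I expect the main obstacle to be this last translation: making precise that verifying a measure axiom only against the members of $\mathcal U$ that happen to lie in $M$ is enough to conclude $M\models$ (axiom). The correct viewpoint is that each axiom is $\Pi_1$ or $\Pi_2$ over $H_\theta$ in the parameter $e$, so its truth in $M$ is decided by instances whose parameters range over $M$, and the seed $s$ is designed so that, for parameters in $M$, membership in $\mathcal U\cap M$ coincides with containing $s$; this reduces every axiom to the elementary set-theoretic facts about $s$ listed above. A secondary point to get right is the completeness clause, where one must observe that a ${<}\kappa$-indexed sequence lying in $M$ is indexed by some $\eta\in M\cap\kappa$ and so has all of its entries in $M$; and the preliminary estimate $\pi_M(\lambda)<\kappa$, which is what ties the smallness of the seed to the $\aleph_0$-guessing structure.
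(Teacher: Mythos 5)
Your proof is correct and takes a genuinely different route from the paper's. The paper first invokes Magidor's Lemma~\ref{lem.isotype0guesmod} to identify the transitive collapse of $M$ with a true $H_{\bar\theta}$, defines downstairs the measure $\bar{\mathcal U}$ on $P_{\bar\kappa}H_{\bar\lambda}$ derived from the seed $j_M[H_{\bar\lambda}]=M\cap H_\lambda$ (the hypothesis $|P(P(H_\lambda))|<\theta$ is used to see that $\bar{\mathcal U}$, although defined in $V$ from the external object $M$, lands inside $H_{\bar\theta}=\dom(j_M)$), and then sets $\mathcal U=j_M(\bar{\mathcal U})$. You instead apply the guessing property directly to the trace filter $d=\{A\in M\cap P(P_\kappa H_\lambda): M\cap H_\lambda\in A\}$ to obtain $e\in M$ with $e\cap M=d$, and then verify the measure axioms inside $M$ by the seed computation. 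The two constructions yield the same kind of object --- the paper's $\mathcal U$ also satisfies $\mathcal U\cap M=d$ --- and the seed verifications you carry out explicitly are exactly the ones the paper leaves implicit when it asserts that $\mathcal U$ is a normal measure. Your version has the merit of not needing the transitive collapse to be a true $H_{\bar\theta}$ in order to produce $\mathcal U\in M$: it uses only the raw $0$-guessing property applied to a single subset of $P(P_\kappa H_\lambda)$, with $|P(P(H_\lambda))|<\theta$ entering only to put $P(P_\kappa H_\lambda)$ into $H_\theta$ and hence into $M$.

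One step you should not gloss over: you assert $\pi_M(\lambda)<\kappa$ in order to conclude $|M\cap H_\lambda|<\kappa$, which is what makes the seed an element of $P_\kappa H_\lambda$ and hence makes $e$ a proper fine filter. This is not automatic for an arbitrary $\aleph_0$-guessing model: the models witnessing hugeness in Lemma~\ref{lem.hugeGM} have $\otp(M\cap\lambda)\geq\kappa_M$ for suitable regular $\lambda\in M$, and for such $M,\lambda$ the trace $M\cap H_\lambda$ is too large to serve as a seed. The paper's own proof silently assumes the same bound (it needs $j_M[H_{\bar\lambda}]\in j_M(P_{\bar\kappa}H_{\bar\lambda})$), and the bound does hold in the supercompactness context where the proposition is applied, so this is a defect of the statement rather than of your approach; still, you should either add $|M\cap H_\lambda|<\kappa$ as an explicit hypothesis or justify it, rather than derive it from the unproved claim $\pi_M(\lambda)<\kappa$.
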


\begin{proof}
Let $\pi_M$ be the transitive collapse of $M$ and $j_M$ be the inverse map.
Since $\kappa_M=\kappa$ is inaccessible by Proposition~\ref{prop.basiguessing}\ref{prop.basiguessing-5}, 
$\pi_M[M]=H_{\bar{\theta}}$ 
for some regular cardinal $\bar{\theta}$ and, if $\bar{\kappa}=\pi_M[\kappa]$, 
$j_M\colon H_{\bar{\theta}}\rightarrow H_\theta$ is an elementary embedding 
with critical point $\bar{\kappa}$.
If $\lambda\in M$ is regular, $|P(P(H_\lambda))|< \theta$ 
and $\bar{\lambda}=\pi_M[\lambda]$, we get that $\bar{\lambda}$ is regular and
$|P(P(H_{\bar{\lambda}}))|<\bar{\theta}$. 
Define $\mathcal{\bar{U}}$ on $P_{\bar{\kappa}}H_{\bar{\lambda}}$ by 
$A\in\mathcal{\bar{U}}$ iff $j_M[H_{\bar{\lambda}}]=M\cap H_\lambda\in j_M(A)$. 
Then $\mathcal{\bar{U}}\subseteq P(P(H_{\bar{\lambda}}))$ and 
thus $\mathcal{\bar{U}}\in H_{\bar{\theta}}$. 
Then $j_M(\mathcal{\bar{U}})=\mathcal{U}\in M$ is an ultrafilter on $P_\kappa H_\lambda$ and 
$M\cap H_\lambda\in j(A)$ 
for all $A\in\mathcal{\bar{U}}$ i.e. 
$M\cap\lambda\in\bigcap j_M[\mathcal{\bar{U}}]=\bigcap(\mathcal{U}\cap M)$, 
the desired conclusion.
\end{proof}

Assume $f$ is a strong $\mathcal{J}_\kappa$-Laver function, we want to show that it is also a 
strong $\mathcal{J}$-Laver function.

Given $x\in V_\lambda$, pick $\bar{j}\in\mathcal{J}_\kappa$, a large enough regular 
$\theta>\lambda$ and a $V$-generic filter $G$ for the full stationary tower on some Woodin cardinal $\delta>\theta$ such that:
\begin{itemize}
\item 
$x\in H_\theta$,
\item 
$\mathrm {G}_\kappa^{H_\theta}\in G$. 
\item 
$\bar{j}$ is induced by the projection of $G$ to $H_\theta$,
\item 
$\bar{j}(f)(\kappa)=x$.
\end{itemize}

By Proposition~\ref{prop.GMtoNorm}, for each $M\in \mathrm {G}_\kappa^{H_\theta}$ 
there is a normal measure $\mathcal{U}_M\in M$ on $P_\kappa\lambda$ 
such that $M\cap\lambda\in\bigcap (\mathcal{U}_M\cap M)$. 

By a standard density argument for the full stationary tower, since 
$\mathrm {G}_\kappa^{H_\theta}\in G$ there will be some normal measure $\mathcal{U}\in V$ 
such that 
\[
\{M\in\mathrm {G}_\kappa^{H_\theta}:\mathcal{U}_M=\mathcal{U}\}\in G.
\] 
This means that $\mathcal{U}\subseteq G$ and thus that $\mathcal{U}$ 
is the projection of $G$ to $H_\lambda$.

Since $H_\lambda\subseteq H_\theta$ are both transitive, contain $\kappa$, 
and $j_G[H_\kappa]\in M_G$, Fact~\ref{secLD.keyremark0}
applied to $j_G$, $V_\lambda$ and $H_\theta$
brings that 
\[
j_\mathcal{U}(f)(\kappa)=j_G(f)(\kappa)=\bar{j}(f)(\kappa)=x.
\]
However $\mathcal{U}\in\mathcal{J}$. Thus $f$ is also a $\mathcal{J}$-Laver function. 
\end{proof}

The equivalence of~\ref{secLD.thmeqLD-2} and~\ref{secLD.thmeqLD-3} is a standard 
property of the stationary tower and follows by the following

\begin{fact}\label{keyremark1}
For a given $f\colon \kappa\rightarrow H_\kappa$ and $x\in H_\theta$, 
there is $\bar{j}\colon  V\rightarrow \bar{M}$ in $\mathcal{J}_\kappa$ such that 
$\bar{j}[H_\theta]\in \bar{M}$ and $\bar{j}(f)(\kappa)=x$
iff $\{X\in \mathrm {G}_\kappa(H_\theta): f(X\cap\kappa)=\pi_X[x\cap X]\}$ is stationary.
\end{fact}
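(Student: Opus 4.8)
The plan is to read the statement off the generic ultrapower as a single application of the {\L}o\'s theorem for the stationary tower (property~\ref{eqn.WOOjG2}), combined with the basic forcing fact that a set $S\subseteq P(H_\theta)$ belongs to some $V$-generic filter for $\mathbb{P}_\delta$ if and only if $S$ is a condition, i.e.\ is stationary. First I would record that if $\bar j=j_{H_\theta}$ is induced by $G\restriction H_\theta$ with $\mathrm{G}_\kappa(H_\theta)\in G$, then $\crit(\bar j)=\kappa$, and by property~\ref{eqn.WOOjG} the equivalence class $\bar j[H_\theta]=[\id_{P(H_\theta)}]_G$ lies in $\bar M$ automatically; so the clause ``$\bar j[H_\theta]\in\bar M$'' merely pins $\bar j$ down as the $H_\theta$-tower embedding and imposes no extra constraint. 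The whole content is therefore to translate the equation $\bar j(f)(\kappa)=x$ into a statement about which subsets of $\mathrm{G}_\kappa(H_\theta)$ belong to $G$.

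For that translation I would use the standard representation facts recalled in Section~\ref{subsec.elemb}, now applied to the tower embedding. Since $G$ concentrates on $\mathrm{G}_\kappa(H_\theta)$, every $X$ in the relevant measure-one sets satisfies $\kappa_X=\kappa$, whence $X\cap\kappa$ is an ordinal and $\pi_X(\kappa)=\otp(X\cap\kappa)=X\cap\kappa$; thus $\kappa$ is represented by $X\mapsto X\cap\kappa$, while for $X\ni x$ the element $x$ is represented by $X\mapsto\pi_X[x\cap X]$. Applying the image function $\bar j(f)$ to the represented ordinal $\kappa$ yields the pointwise computation
\[
\bar j(f)(\kappa)=\bigl[\langle f(X\cap\kappa):X\in\mathrm{G}_\kappa(H_\theta)\rangle\bigr]_G,
\]
which is legitimate because $X\cap\kappa<\kappa=\dom(f)$. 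By property~\ref{eqn.WOOjG2} the equality $\bar j(f)(\kappa)=x$ is then equivalent to
\[
S\coloneqq\{X\in\mathrm{G}_\kappa(H_\theta):f(X\cap\kappa)=\pi_X[x\cap X]\}\in G.
\]

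Finally I would close the two directions. If such a $\bar j\in\mathcal J_\kappa$ exists, it arises from some $G$ with $\mathrm{G}_\kappa(H_\theta)\in G$ and $\bar j(f)(\kappa)=x$, so $S\in G$; since every element of $G$ is a condition of $\mathbb{P}_\delta$ and hence stationary, $S$ is stationary. Conversely, if $S$ is stationary then $S\in H_\delta$ is a condition, so by genericity there is a $V$-generic $G\ni S$; upward closure gives $\mathrm{G}_\kappa(H_\theta)\in G$, the associated $j_{H_\theta}$ lies in $\mathcal J_\kappa$ by Definition~\ref{def.JkappaLD}, and $S\in G$ yields $j_{H_\theta}(f)(\kappa)=x$ by the computation above. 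The main obstacle is the middle step: carefully matching the representing functions for $\kappa$ and $x$ against the precise form of {\L}o\'s's theorem in property~\ref{eqn.WOOjG2} (whose measure-one sets are phrased via $X\prec H_\gamma$), and verifying that the pointwise evaluation of the image function $\bar j(f)$ at the represented ordinal $\kappa$ is valid. Once these representations are secured, the displayed equivalence is immediate.
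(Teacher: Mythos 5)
Your proposal is correct and follows essentially the same route as the paper: unfold $\bar j(f)(\kappa)=x$ via the Łoś-type property~\ref{eqn.WOOjG2} into membership of $S=\{X\in\mathrm{G}_\kappa(H_\theta):f(X\cap\kappa)=\pi_X[x\cap X]\}$ in $G$, then use that elements of a generic filter are stationary for one direction and force below the condition $S$ for the other. The paper additionally routes the identification of $\bar j(f)(\kappa)$ with $j_G(f)(\kappa)$ explicitly through Fact~\ref{secLD.keyremark0}, which you handle implicitly via the representing functions; this is an inessential difference.
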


\begin{proof}
Let $\bar{j}$ be induced by some $j_G$ such that $\mathrm {G}_\kappa(H_\theta)\in G$ 
and $G$ is a $V$ generic filter for a full stationary tower up to some Woodin cardinal $\delta$.
By Fact~\ref{secLD.keyremark0} for any 
$f\colon \kappa\rightarrow H_\kappa$ and any $x\in H_\theta$
$\bar{j}(f)(\kappa)=x$ iff $j_G(f)(\kappa)=x$.

If we unfold the definition of $j_G$, we get that for any $f\colon \kappa\rightarrow H_\kappa$ 
and any $x\in H_\theta$:
\[
j_G(f)(\kappa)=x \iff \{X\prec H_\theta: f(X\cap\kappa)=\pi_X[x\cap X]\}\in G.
\]
In particular we have that $S=\{X\in\mathrm {G}_\kappa^{H_\theta}: f(X\cap\kappa)=\pi_X[x\cap X]\}\in G$
is stationary and we are done.
The converse implication is proved by a similar argument.
\end{proof}

This concludes the proof of the theorem.

\subsection{Laver functions in models of $\PFA$}\label{subsec.PFASLD}

We can prove the main theorem of the whole Section~\ref{sect.laverdiamond}:

\begin{theorem} \label{thm.PFASLD}
Assume $\PFA$ holds and there are class many Woodin cardinals. 
Then there is a strong $\mathcal{J}_{\aleph_2}$-Laver function
$f \colon \aleph_2\to H_{\aleph_2}$ 
i.e. a function $f$ such that:
\[
\{M\in\mathrm {G}_{\aleph_2}(H_\theta): \pi_M(x)=f(M\cap\kappa)\}
\] 
is stationary for all $\theta\geq\kappa$ and for all $x$.
\end{theorem}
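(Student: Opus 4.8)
The plan is to build $f$ by a minimal-counterexample recursion in the spirit of the proof of Laver's theorem given above, replacing the ultrapower embeddings by the inverse collapses $j_M=\pi_M^{-1}$ of internally club guessing models, and replacing the reflection supplied by a supercompactness measure by the combination of Theorem~\ref{the.PFAIUGM} with the isomorphism-type analysis of Theorem~\ref{thm.isotypealeph1guesmod}. Throughout I would fix a ladder system $\mathcal{C}$ on $\omega_1$ and let $<_*$ be the well-order of $H_{\aleph_2}$ definable from $\mathcal{C}$ provided by $\PFA$ (Remark~\ref{rem.PFAIsotype}). By Fact~\ref{keyremark1} the conclusion is exactly the assertion that for every $x$ and every regular $\theta$ the set $\{M\in\mathrm{G}_{\aleph_2}(H_\theta):f(M\cap\aleph_2)=\pi_M[x\cap M]\}$ is stationary, so it suffices to produce a single $f\colon\aleph_2\to H_{\aleph_2}$ with this property.

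First I would record the structural facts about the relevant models. If $M\prec H_\theta$ is a guessing model with $\kappa_M=\aleph_2$, then $M\cap\aleph_2=:\delta_M$ is an ordinal with $\delta_M\subseteq M$ (by minimality of $\kappa_M$, as in Proposition~\ref{prop.basiguessing}\ref{prop.basiguessing-1}), the collapse $\bar M=\pi_M[M]$ is a transitive set of size $\aleph_1$, hence $\bar M\in H_{\aleph_2}$, and $j_M=\pi_M^{-1}\colon\bar M\to H_\theta$ is elementary with $\crit(j_M)=\delta_M$ and $j_M(\delta_M)=\aleph_2$. The point that makes the recursion possible is that these collapses are \emph{small}: any putative failure of $f\restriction\delta$ can be witnessed \emph{inside} $H_{\aleph_2}$ by a pair $(\bar N,\bar x)$ with $\bar N$ a transitive set of size $\le\aleph_1$ carrying a designated critical ordinal $\delta$ and $\bar x\in\bar N$. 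I would define $f(\delta)$ to be the $<_*$-least such witness $\bar x$ for which $f\restriction\delta$ is not yet ``good'', and $f(\delta)=0$ if there is none.

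The verification proceeds by contradiction. Suppose $f$ fails, and fix a counterexample $(x,\theta)$ that is minimal (least $\theta$, then $<_*$-least $x$), together with a club $D$ disjoint from the set above. Working in a large $H_\chi$ containing $x,\theta,f,D,\mathcal{C},<_*$, Theorem~\ref{the.PFAIUGM} (together with the fact that under $\PFA$ there are stationarily many guessing models with $\kappa_M=\aleph_2$) provides stationarily many internally club guessing models $M\prec H_\chi$ with $\kappa_M=\aleph_2$; I may therefore choose such an $M$ with $x,\theta,f,D,\mathcal{C},<_*\in M$ and with $M\cap H_\theta\in D$. Two reflection computations then drive the argument. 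On the one hand, since $\mathcal{C}$ is fixed by $\pi_M$ (as $\omega_1\subseteq M$) and $<_*$ is definable from $\mathcal{C}$, the definition of $f$ is absolute between $H_\chi$ and $\bar M$; using that $f(\beta)\in(H_{\delta_M})^{\bar M}$ for all $\beta<\delta_M$ on a club of $M$, this yields $\pi_M(f)=f\restriction\delta_M$. On the other hand, elementarity of $\pi_M$ transports the minimal failure of $f$ at $(x,\theta)$ down to the minimal failure of $f\restriction\delta_M$ at $(\pi_M(x),\pi_M(\theta))$ inside $\bar M$. Here Theorem~\ref{thm.isotypealeph1guesmod} and Remark~\ref{rem.PFAIsotype} are essential: they guarantee that the transitive structures the recursion inspects at stage $\delta_M$ are exactly the collapses of genuine internally club guessing models with critical ordinal $\delta_M$, so the ``internal'' notion of counterexample used to define $f(\delta_M)$ coincides with the reflected one and $<_*$-minimality is preserved. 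Consequently $f(\delta_M)=\pi_M(x)=\pi_M[x\cap M]$, contradicting $M\cap H_\theta\in D$.

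The main obstacle is precisely the coordination carried out in the last paragraph: one must show that the collapse $\bar M$ of the chosen guessing model \emph{is} the $<_*$-least counterexample structure recognized by the recursion at stage $\delta_M$, and that $\pi_M(f)=f\restriction\delta_M$. The first requires the rigidity supplied by Theorem~\ref{thm.isotypealeph1guesmod}, which pins the isomorphism type of an internally club guessing model by $(M\cap\aleph_2,\otp(\Card_M))$ and hence makes the family of candidate collapse structures canonical; the use of the $\PFA$-definable well-order (Remark~\ref{rem.PFAIsotype}) is exactly what lets us drop hypothesis~\ref{thm.isotypealeph1guesmod-3} and obtain this rigidity for the models produced by Theorem~\ref{the.PFAIUGM}. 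The second is a routine but delicate absoluteness check, relying on $\crit(j_M)=\delta_M$ so that $j_M$ fixes $(H_{\delta_M})^{\bar M}$ pointwise. Once these are in place the contradiction closes just as in the supercompact case, and Fact~\ref{keyremark1} repackages the resulting stationarity as strong $\mathcal{J}_{\aleph_2}$-Laver diamond.
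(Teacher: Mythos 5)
Your overall architecture --- a recursion on $\alpha<\aleph_2$ whose stages consult canonical transitive structures of size $\aleph_1$ supplied by Theorem~\ref{thm.isotypealeph1guesmod} together with Remark~\ref{rem.PFAIsotype}, plus the observation that $f\restriction\delta_M$ is correctly computed inside the collapse of a guessing model containing $f$ --- matches the paper's. But the verification step is where you part ways with the paper, and it is where your argument has a genuine gap. The paper does not run a minimal-counterexample argument at all: it first assumes that \emph{every} $g\colon\aleph_2\to H_{\aleph_2}$ fails, fixes one $\theta$ bounding all the witnesses $x_g$, defines the recursion relative to that single $\theta$ (so $N_\alpha$ is the collapse of models of $H_{\theta^+}$ and $f(\alpha)$ is an \emph{arbitrary} witness computed in $N_\alpha$), and then closes the contradiction with a stationary tower embedding $j_G$ with $\mathrm{H}^*_{\theta^+}\in G$. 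The point of the tower is that $j_G[H_{\theta^+}]\in(\mathrm{H}^*_{j_G(\theta)^+})^{M_G}$ forces $N_{(\omega_2)^V}$, as computed in $M_G$, to be literally $(H_{\theta^+})^V$; hence ``$N_{(\omega_2)^V}$ thinks the set attached to $j_G(f)((\omega_2)^V)$ is non-stationary'' is genuine non-stationarity in $V$, which contradicts the tower identity that $j_G(f)((\omega_2)^V)=x$ exactly when the corresponding set lies in $G$ and is therefore stationary. No coordination of \emph{which} witness is selected at each stage is ever needed.

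Your substitute --- reflect to one set-sized internally club guessing model $M$ with $M\cap H_\theta\in D$ and argue $f(\delta_M)=\pi_M(x)$ because ``$<_*$-minimality is preserved'' --- does not close. First, the order ``least $\theta$, then $<_*$-least $x$'' is undefined for $x\in H_\theta\setminus H_{\aleph_2}$: the $\PFA$ well-order $<_*$ lives only on $H_{\aleph_2}$, and there is no definable well-order of $H_\theta$ to place in $M$ and reflect. Second, and more seriously, since your $f$ must be defined before $\theta$ is known, the recursion at stage $\delta_M$ has to consider witnesses $(\bar N',\bar x')$ coming from canonical structures at \emph{every} level (every order type of cardinals); such a witness certifies the failure of $f\restriction\delta_M$ only as an element of $\bar N'$, and pulled back through a model $M'$ realizing $\bar N'$ it certifies the failure of $j_{M'}(f\restriction\delta_M)$, which need not equal $f$ unless $f\in M'$ and $\pi_{M'}(f)=f\restriction\delta_M$ --- something you cannot guarantee for the particular $\delta_M$ and level at hand. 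A witness $\bar x'<_*\pi_M(x)$ that corresponds to no counterexample for $f$ is therefore not excluded by the minimality of $(x,\theta)$; if one exists, $f(\delta_M)\ne\pi_M(x)$ and no contradiction with $M\cap H_\theta\in D$ arises. The underlying circularity is that $M$, and hence the club you land in, must be chosen before you know which witness the recursion selects at $\delta_M$. A tower-free version of the argument can be salvaged, but only by adopting the paper's set-up (global failure, one fixed $\theta$, arbitrary witnesses) and then pressing down the regressive assignment $M\mapsto y_M=j_M(f(\delta_M))\in M\cap H_\theta$ over the stationary set of suitable $M$, so that stationarily many $M\cap H_\theta$ land inside a single non-stationary set $S_{y_0}$ --- not by a minimal-counterexample recursion.
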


\begin{proof}
We shall prove the following strengthening of the conclusion: 
\begin{claim}
Let
\[
\mathrm{H}^*_\theta=\{M\in\mathrm {G}_{\aleph_2}^{\aleph_1}(H_\theta): 
M\text{ is $\aleph_1$-internally club and } \text{ and } |M|=\aleph_1\}
\]
Then for some function $f:\aleph_2\to H_{\aleph_2}$ we have that:
\[
\{M\in \mathrm{H}^*_\theta:\pi_M(x)=f(M\cap\kappa)\}
\] 
is stationary for all $\theta\geq\kappa$ and for all $x\in H_\theta$.
\end{claim}

By Theorem~\ref{the.PFAIUGM}\ref{the.PFAIUGM-1}, $\mathrm{H}^*_\lambda$ 
is stationary for every $\lambda\geq\omega_2$ in models of $\PFA$.

The rest of the section is devoted to the proof of the claim.
Assume towards a contradiction 
that for each $g\colon \aleph_2\rightarrow H_{\aleph_2}$, there is $x_g\in H_\gamma$ 
such that 
\[
\{M\in \mathrm{H}^*_\gamma: g(M\cap\kappa)=\pi_M(x_g)\}
\]
is non-stationary.

Let $\theta > \lambda$ be regular and such that $x_g\in H_\theta$ for all such $g$.
Then the following statement $\psi(\theta)$ holds in $V$ as well as in $H_{\theta^+}$:
\begin{quote}
For every $g\colon \aleph_2\rightarrow H_{\aleph_2}$ there is $x_g\in H_\theta$ such that 
\begin{equation*}
\{X\in\mathrm{H}^*_\theta: g(X\cap\kappa)=\pi_X[x_g\cap X]\}
\end{equation*}
is non-stationary.
\end{quote}

%

Now we proceed to define $f\colon \aleph_2\rightarrow H_{\aleph_2}$ as follows. 

First of all by the isomorphism type Theorem~\ref{thm.isotypealeph1guesmod} in combination with 
Remark~\ref{rem.PFAIsotype} we have the following:
\begin{quote}
Assume $\PFA$ holds. Let $M_i\prec H_{\theta_i}$ be models in  
$\mathrm {H}^*_{\theta_i}$ for $i<2$
such that $M_0\cap\aleph_2=M_1\cap\aleph_2$ and there is a ladder system 
$\mathcal{C}=\{C_\alpha:\alpha<\omega_1\}$ in $M_0\cap M_1$. 
Then for some $\lambda\in M_0\cap M_1$, $M_0$ is isomorphic to $M_1\cap H_\lambda$ or conversely.
\end{quote}
(See Remark~\ref{rem.PFAIsotype} for the definition of a ladder system on $\omega_1$.)
The above shows:
\begin{fact}
Assume $\PFA$ holds. Then, given a ladder system $\mathcal{C}$ on $\omega_1$,
for a stationary set of $\alpha<\aleph_2$ we can 
define a unique transitive structure $N_\alpha$ of size $\aleph_1$ and $\theta_\alpha\in N_\alpha$ 
such that:
\begin{itemize}
\item
$\mathcal{C}\in N_\alpha$,
\item 
$\alpha=(\omega_2)^{N_\alpha}$,
\item
if $M\in\mathrm{H}^*_{\theta^+}$
is such that $\mathcal{C}\in M$ and $M\cap\aleph_2=\alpha$, 
then the transitive collapse of $M$ is $N_\alpha$,
\item
$\theta_\alpha=\pi_M(\theta)$,
\item
$\psi(\theta_\alpha)$ holds in $N_\alpha$, i.e.,
for all $g:(\omega_2)^{N_\alpha}\to (H_{\omega_2})^{N_\alpha}$ in $N_\alpha$,
there is some $x_g\in (H_{\theta_\alpha})^{N_\alpha}$ such that 
\[
\{M\in(\mathrm{H}_{\theta_\alpha}^*)^{N_\alpha}:\pi_M(x_g)=g(M\cap\alpha)\}
\]
is non-stationary in $N_\alpha$.
\end{itemize}
\end{fact}

Let $A$ be the set of $\alpha$ such that $N_\alpha$ exists and let 
\[
 \phi \colon A \to H_{\aleph_2}, \qquad \phi ( \alpha ) = N_\alpha 
\] 
be the enumerating function.
 Notice that
$\phi\upharpoonright\alpha\in H_{\aleph_2}$ for all $\alpha\in A$, since $|N_\alpha|=\aleph_1$. 
Now we redo the proof of Laver's theorem using the structures $N_\alpha$ as follows.

If the following does not hold for $\alpha$ we set $f(\alpha)=0$.
Else we require
\begin{enumerate}[label=(\Alph*)]
\item
$\alpha\in A$, so that:
\begin{itemize}
\item 
$N_\alpha$ models that $\mathrm{H}_{\alpha}^{\aleph_1}(H_{\theta_\alpha})$ is stationary,
\item
$\psi(\theta_\alpha)$ holds in $N_\alpha$,
\end{itemize}
\item 
$f\restriction \alpha\in N_\alpha$.
\end{enumerate}
In this case we set $f(\alpha)$ to be some $x\in (H_{\theta_\alpha})^{N_\alpha}$ 
such that  $N_\alpha$ models that
\begin{equation*}
\{X\in\mathrm {H}_{\theta_\alpha}^*: f(X\cap\alpha)=\pi_X(x)\}
\end{equation*}
is non-stationary.

\begin{claim}
The set of $\alpha\in A$ such that $f\restriction \alpha\in N_\alpha$ is a club subset of $A$.
\end{claim}

\begin{proof}
Let $M\in \mathrm {H}^*_{\theta^+}$ such that $\phi=\{N_\alpha:\alpha\in A\}\in M$. 
Let $\alpha=M\cap\aleph_2$.  Then the transitive collapse of $M$ is $N_\alpha$ and
we get that: 
\begin{enumerate}[label=(\roman*)]
\item 
$\{N_\gamma:\gamma\in \alpha\cap A\}=\pi_M[\phi]\in N_\alpha$, 
and for all $\gamma\in A\cap M$, $N_\gamma$ has size $\aleph_1$ and thus is contained in 
\[
M\cap H_{\aleph_2}=\pi_M[H_{\aleph_2}\cap M] .
\] 
\item 
$\psi(\theta_\alpha)$ holds in $N_\alpha$.
\end{enumerate}

For all $\xi\in A$, to define $f\restriction \xi$ we just need to know the 
sequence $\langle N_\gamma:\gamma\in A\cap\xi\rangle$. 
Since $\phi\in M$, this means that $f\restriction \gamma\in M$ for all $\gamma\in M\cap\aleph_2$. 
Since $M$ is an $\aleph_1$-guessing model we get that 
$f\restriction \alpha=g\restriction \alpha$ for some $g\in M$. 
Thus $f\restriction \alpha=\pi_M(g)\in N_\alpha$.
\end{proof}

Now let $j_G\colon V\rightarrow M_G$ be an elementary embedding induced by a 
$V$-generic filter $G$ for the full stationary tower on some Woodin cardinal 
$\delta>\theta$ such that $\mathrm {H}^*_{\theta^+}\in G$. 
Then, appealing repeatedly to items~\ref{eqn.WOOjG} and~\ref{eqn.WOOjG2} of Section~\ref{subsec.WOOST}, we can see that the following holds in $M_G$:

\begin{enumerate}[label=(\Alph*)]
\item
$\crit(j_G)=\omega_2$,
\item 
$\omega_2\in j_G(A)$ since $j_G[H_{\theta^+}]=M\in (\mathrm{H}^*_{j(\theta)^+})^{M_G}$ (by items~\ref{eqn.WOOjG} and~\ref{eqn.WOOjG2} of Section~\ref{subsec.WOOST})
and thus in particular
\begin{itemize}
\item
$(H_{\theta^+})^V=\pi_M[M]=(N_{(\omega_2)^V})^{M_G}=j_G(\phi)((\omega_2)^V)$, 
\item
$\theta_{(\omega_2)^V}=\theta$,  
\item 
$(\mathrm{H}^*_{\theta})^{(N_{(\omega_2)^V})}=(\mathrm{H}^*_{\theta})^V$. 
\end{itemize}
\item 
$f=j_G(f)\restriction (\omega_2)^V\in H_{\theta^+}= (N_{(\omega_2)^V})^{M_G}$ 
\item  
$(N_{(\omega_2)^V})^{M_G}$ models that $\psi(\theta)$ holds.
\end{enumerate}
In particular in $M_G$, $j_G(f)((\omega_2)^V)$ is defined to be some $x\in N_{(\omega_2)^V}$ such that 
\begin{equation*}
N_{(\omega_2)^V}\models 
\{X\in\mathrm {H}^*_{\theta}: 
f(X\cap\alpha)=\pi_X(x)\}\text{ is non-stationary.}
\end{equation*}
but $N_{(\omega_2)^V}=(H_{\theta^+})^V$, so we get that:
\[
(H_{\theta^+})^V\models 
\{X\in\mathrm{H}^*_{\theta}: 
f(X\cap\alpha)=\pi_X(x)\}\text{ is non-stationary.}
\]
which occurs only if
\[
\{X\in\mathrm{H}^*_{\theta}: 
f(X\cap\alpha)=\pi_X(x)\}\text{ is non-stationary in $V$.}
\]
This is impossible since $j_G(f)((\omega_2)^V)=x$ iff
\[
\{X\in\mathrm{H}^*_{\theta}: f(X\cap\alpha)=\pi_X(x)\}\in G
\]
which can occur only if the latter set is stationary in $V$.
The proof of the claim and of the theorem is completed.
\end{proof}


\section{Further applications of guessing models}\label{sect.applications}

We show that the failure of the weakest forms of square principle and the 
singular cardinal hypothesis are simple byproduct of the existence of guessing models. 
In particular the first application yields that the existence of a guessing models 
subsumes strong large cardinal hypotheses.

\subsection{The failure of square principles}\label{subsect.square}
Recall the following definitions:
\begin{definition}\label{def.square}
A sequence $\langle \mathcal{C}_\alpha :  \alpha \in \LimNoArg \cap E \cap \lambda \rangle$ is called a
\emph{$\square_E(\kappa, \lambda)$-sequence} if it satisfies the following properties.
\begin{enumerate}[label=(\roman*)]
\item 
$0 < |\mathcal{C}_\alpha| < \kappa$ for all $\alpha \in \LimNoArg \cap E \cap \lambda$,
\item 
$C \subset \alpha$ is club for all $\alpha \in \LimNoArg \cap E \cap \lambda$ and $C \in \mathcal{C}_\alpha$,
\item 
$C \cap \beta \in \mathcal{C}_\beta$ for all $\alpha \in \LimNoArg \cap E \cap \lambda$,
$C \in \mathcal{C}_\alpha$ and $\beta \in \Lim C$,
\item 
there is no club $D \subset \lambda$ such that $D \cap \delta \in \mathcal{C}_\delta$
for all $\delta \in \Lim ( D  ) \cap E \cap \lambda$.
\end{enumerate}
We say that $\square_E(\kappa, \lambda)$ holds if there exists a $\square_E(\kappa, \lambda)$-sequence,
and $\square(\kappa, \lambda)$ stands for $\square_\lambda(\kappa, \lambda)$.
\end{definition}
Note that $\square_{\tau, <\kappa}$ implies $\square(\kappa, \tau^+)$
and that $\square(\lambda)$ is $\square(2, \lambda)$.

The theorem below is just a rephrasing using the notion of guessing models of the results 
on the failure of square principles Wei\ss{} obtained assuming his ineffability property 
or thin lists (see~\cite{weiss}).

\begin{theorem}\label{theorem.GM->non_square}
Suppose there is a $\delta$-guessing model $M\prec H_\theta$  
for some $\delta<\kappa_M$ and some regular $\theta>\kappa_M$. 
Then for every regular $\lambda\geq \kappa_M$ in $M\cap\theta$ such that 
$\sup(M\cap\lambda)<\lambda$, $\square_{\cof(<\kappa_M)}(\kappa_M, \lambda)$ fails.
\end{theorem}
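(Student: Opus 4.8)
The plan is to argue by contradiction: assuming a $\square_{\cof(<\kappa_M)}(\kappa_M,\lambda)$-sequence exists, I will use the $\delta$-guessing property of $M$ at the point $\gamma\coloneqq\sup(M\cap\lambda)$ to manufacture a thread, contradicting clause (iv) of Definition~\ref{def.square}. Since $\lambda,\kappa_M\in M$ and $\cof(<\kappa_M)$ is definable from $\kappa_M$, by elementarity there is such a sequence $\vec{\mathcal C}=\langle\mathcal C_\alpha\rangle$ lying in $M$; it is for this sequence that I will produce a thread, i.e.\ a club $D\subseteq\lambda$ with $D\cap\delta\in\mathcal C_\delta$ for every $\delta\in\Lim(D)\cap\cof(<\kappa_M)\cap\lambda$.

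First I would pin down the local data at $\gamma$. Using $\sup(M\cap\lambda)<\lambda$ together with the fact (Proposition~\ref{prop.basiguessing}\ref{prop.basiguessing-6}) that a guessing model is closed under countable suprema, and analyzing the order type of $M\cap\lambda$, I would check $\cf(\gamma)<\kappa_M$, so that $\gamma\in\Lim\cap\cof(<\kappa_M)$ and $\mathcal C_\gamma$ is defined and nonempty. Fix a coherent club $C^*\in\mathcal C_\gamma$; by clause (iii) its initial segments satisfy $C^*\cap\beta\in\mathcal C_\beta$ for every $\beta\in\Lim(C^*)$. The idea is that $C^*$ is a ``partial thread'' reaching up to $\gamma$, and that the guessing property of $M$ should extend it past $\gamma$ all the way through $\lambda$.

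The technical heart is to show that $C^*$, regarded as a subset of $\lambda\in M$, is $(\delta,M)$-approximated, and then to apply the $\delta$-guessing property to obtain $e\in M\cap P(\lambda)$ with $e\cap M=C^*\cap M$. For the approximation, given $Z\in M\cap P_\delta(R)$, the set $a=Z\cap\lambda$ lies in $M$ and has size $<\delta<\kappa_M$, hence $a\subseteq M$ and $a$ is bounded below $\gamma$; choosing $\beta\in\Lim(C^*)\cap M$ above $\sup a$, coherence gives $C^*\cap\beta\in\mathcal C_\beta$, and since $\mathcal C_\beta\in M$ has size $<\kappa_M$ we get $C^*\cap\beta\in M$, whence $C^*\cap Z=(C^*\cap\beta)\cap a\in M$. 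Once $e$ is produced, I would note that $C^*$ is cofinal in $\gamma$, so $e$ is unbounded in $\lambda$ from $M$'s viewpoint, and after replacing $e$ by its closure I may take it to be club. Finally, for $\beta\in\Lim(C^*)\cap M$ both $e\cap\beta$ and $C^*\cap\beta$ lie in $M$ and have the same trace on $M$ (as $e\cap M=C^*\cap M$); two elements of $M$ with equal trace on $M$ coincide, so $e\cap\beta=C^*\cap\beta\in\mathcal C_\beta$. As this holds for $\beta$ in a set cofinal in $\gamma$, the set $\{\beta<\lambda:e\cap\beta\in\mathcal C_\beta\}$ is unbounded from $M$'s viewpoint; by elementarity it is genuinely unbounded, and coherence (iii) then upgrades it to \emph{all} of $\Lim(e)\cap\cof(<\kappa_M)\cap\lambda$, so $e$ is a thread, contradicting (iv).

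The step I expect to be the main obstacle is precisely the one on which both the approximation and the final promotion rest: guaranteeing that $\Lim(C^*)\cap M$ is cofinal in $\gamma$ (equivalently, that $M$ sees cofinally many coherent initial segments of $C^*$). This is a genuine internal-unboundedness demand on $M$ along the chosen branch, and it is where the interplay between $\cf(\gamma)<\kappa_M$, the closure properties of the guessing model, and the freedom either to choose $C^*\in\mathcal C_\gamma$ well or to build a coherent branch directly through the $M$-levels from the nonempty $\mathcal C_\beta$ with $\beta\in M\cap\cof(<\kappa_M)\cap\gamma$, must be exploited. Everything else---the coding of $C^*$ as a subset of $\lambda$, the trace-equality argument, and the passage from ``unboundedly many good $\beta$'' to a full thread via coherence---I expect to be routine.
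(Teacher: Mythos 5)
Your proposal follows essentially the same route as the paper's proof: pull a $\square_{\cof(<\kappa_M)}(\kappa_M,\lambda)$-sequence into $M$ by elementarity, fix $C\in\mathcal{C}_\gamma$ for $\gamma=\sup(M\cap\lambda)$, use coherence together with $|\mathcal{C}_\xi|<\kappa_M$ (hence $\mathcal{C}_\xi\subseteq M$ for $\xi\in M$) to show $C$ is $(\delta,M)$-approximated, guess it by some $E\in M$, and use trace-equality of elements of $M$ to conclude that $M$ believes $E$ threads the sequence, contradicting clause (iv) of Definition~\ref{def.square} by elementarity.

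The one step you flag as the main obstacle --- that $\Lim(C)\cap M$ is cofinal in $\gamma$ --- is not an internal-unboundedness demand on $M$; it is exactly Proposition~\ref{prop.basiguessing}\ref{prop.basiguessing-6}. Any guessing model is closed under countable suprema, $C$ is club in $\gamma$, and $M\cap\gamma$ is cofinal in $\gamma$; so interleaving a strictly increasing $\omega$-sequence alternately from $C$ and from $M\cap\gamma$ produces a supremum lying both in $\Lim(C)$ and in $M$ (of countable cofinality, hence a point where $\mathcal{C}_\xi$ is defined). This yields cofinally many such $\xi$, which is all that the approximation argument and the final coherence step require; it is exactly how the paper argues (``since $M$ is closed under countable suprema, there are club many such $\xi$ of countable cofinality in $M$''). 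One caveat you share with the paper: both arguments pick $C\in\mathcal{C}_\gamma$, which presupposes $\cf(\gamma)<\kappa_M$ so that $\mathcal{C}_\gamma$ is defined and nonempty; you at least announce that this needs checking, whereas the paper leaves it implicit. The remaining steps of your write-up (taking the closure of $e$, promoting ``cofinally many good $\beta$'' to a full thread inside $M$, and exporting the contradiction by elementarity) match the paper's argument.
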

The assumption that $M\cap\lambda$ is bounded in $\lambda$ for any regular 
$\lambda\in M$ above $\kappa_M$ might seem redundant and
I conjecture that for any guessing model $M\prec H_\theta$ and any regular cardinal 
$\lambda\in M$ above $\kappa_M$, $\sup(M\cap\lambda)<\lambda$.
We show below in Proposition~\ref{prop.kunen} (rephrasing Kunen's proof that there 
cannot be an elementary $j\colon V_{\gamma+2}\rightarrow V_{\gamma+2}$) 
that this is the case  for $\aleph_0$-guessing model.

We now turn to the proof of Theorem \ref{theorem.GM->non_square}.

\begin{proof}
Assume not. Since $M$ is a $\delta$-guessing model, $M$ is closed under countable 
suprema, thus $\gamma=\sup(M\cap\lambda)<\lambda$ has uncountable cofinality.
Pick a sequence $\langle \mathcal{C}_\alpha  :  \alpha\in \lambda, \cf(\alpha)<\kappa_M \rangle\in M$ 
witnessing $\square_{\cof(<\kappa_M)}(\kappa_M, \lambda)$. 
Since $|\mathcal{C}_\xi|<\kappa_M$ for all $\xi<\lambda$, $\mathcal{C}_\xi\subseteq M$ 
for all $\xi\in M$.
Pick $C\in \mathcal{C}_\gamma$. 
Then $C\cap\xi\in\mathcal{C}_\xi\subseteq M$ for all $\xi\in M$ which are limit points of $C$. 
Since $M$ is closed under countable suprema, there are club many such $\xi$ 
of countable cofinality in $M$. 
Now given $z\in M\cap P_{\delta}\lambda$, find $\xi\in C\cap M$ above 
$\sup(z)$ and $D\in \mathcal{C_\xi}$ such that $C\cap\xi=D$.
Then $C\cap z=D\cap z\in M$ since $z,D\in M$. 
Thus $C$ is $(\delta,M)$-approximated. Since $M$ is a $\delta$-guessing model, 
there is $E\in M$ be such that $C\cap M=E$. 
Then 
\[
M\models E \text{ is a club subset of }\lambda
\] 
and for all  $\xi\in M$ 
limit points of $E$, 
\[
E\cap\xi\cap M=C\cap\xi\cap M=D\cap M
\] 
for some $D\in\mathcal{C}_\xi$. 
This shows that  $M$ models that $E$ is a counterexample to 
$\langle \mathcal{C}_\alpha :  \alpha \in \lambda \rangle$ being a  
$\square_{\cof(<\kappa_M)}(\kappa_M, \lambda)$-sequence.
\end{proof}

\begin{proposition}\label{prop.kunen}
Assume $M\prec H_\theta$ is an $\aleph_0$-guessing model. 
Then  $M\cap\lambda$ is bounded in $\lambda$ for all regular $\lambda\in M$ above $\kappa_M$.
\end{proposition}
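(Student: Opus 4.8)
The plan is to reproduce Kunen's argument through the elementary embedding attached to an $\aleph_0$-guessing model. First I would invoke the analysis carried out in the proof of Proposition~\ref{prop.GMtoNorm}: since $M$ is $\aleph_0$-guessing, $\kappa=\kappa_M$ is inaccessible (Proposition~\ref{prop.basiguessing}\ref{prop.basiguessing-5}), the transitive collapse of $M$ is $H_{\bar\theta}$ for some regular $\bar\theta$, and $j_M=\pi_M^{-1}\colon H_{\bar\theta}\to H_\theta$ is elementary with $\crit(j_M)=\bar\kappa:=\pi_M(\kappa)$ and $j_M(\bar\kappa)=\kappa$. Write $j=j_M$. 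Fix a regular $\lambda\in M$ with $\lambda>\kappa$ and suppose toward a contradiction that $M\cap\lambda$ is cofinal in $\lambda$. Setting $\bar\lambda=\pi_M(\lambda)$ we have $j(\bar\lambda)=\lambda$ and $M\cap\lambda=j[\bar\lambda]$.

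The first step is a cardinality computation forcing $\lambda$ to be a fixed point. Since $j(\alpha)\geq\alpha$ for every ordinal $\alpha$, we have $\bar\lambda\leq\lambda$. If $\bar\lambda<\lambda$, then $M\cap\lambda=j[\bar\lambda]$ has cardinality at most $\bar\lambda<\lambda$, so it cannot be cofinal in the regular cardinal $\lambda$. Hence $\bar\lambda=\lambda$, i.e.\ $j(\lambda)=\lambda$, and $M\cap\lambda=j[\lambda]$ is cofinal in $\lambda$ of order type $\lambda$.

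The heart of the proof is Kunen's $\omega$-J\'onsson trick. By the Erd\H{o}s--Hajnal theorem there is, in $H_{\bar\theta}$, an $\omega$-J\'onsson function $\bar f\colon [\lambda]^\omega\to\lambda$, that is, one with $\bar f[[X]^\omega]=\lambda$ for every $X\subseteq\lambda$ of size $\lambda$; let $f=j(\bar f)$, which by elementarity is $\omega$-J\'onsson on $\lambda$ in $H_\theta$. Apply this to $A=j[\lambda]$, a subset of $\lambda$ of size $\lambda$: since $\bar\kappa<\kappa<\lambda$, there is a countable $s\in[A]^\omega$ with $f(s)=\bar\kappa$. Now let $t=j^{-1}[s]\in[\lambda]^\omega$. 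Because $|t|=\aleph_0<\bar\kappa=\crit(j)$, the embedding does not lengthen the $\omega$-sequence enumerating $t$, so $j(t)=j[t]=s$; consequently
\[
\bar\kappa=f(s)=j(\bar f)(j(t))=j(\bar f(t))\in\rng(j)=M.
\]
But $\bar\kappa=\crit(j)$ is not in the range of $j$, since $j$ fixes every ordinal below $\bar\kappa$ and sends $\bar\kappa$ strictly above itself. This contradiction shows that $M\cap\lambda$ is bounded in $\lambda$.

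I expect the main obstacle to be the bookkeeping around the $\omega$-J\'onsson function, namely producing $\bar f$ genuinely inside $H_{\bar\theta}$ (equivalently, finding $f$ in $M$ by elementarity, which requires $[\lambda]^\omega$ to be available at the relevant level) and, most crucially, justifying the identity $j(t)=j[t]$ for countable $t$. This last point is exactly where the infinite exponent $\omega$ is indispensable: for a \emph{finite} $t$ one would have $j(t)=j[t]=s$ as well but then $f(s)$ would be forced into $\rng(j)$ with no harm, whereas for a \emph{countably infinite} $t$ the inaccessibility of $\crit(j)$ is precisely what still guarantees $j(t)=j[t]$ while allowing $f(s)$ to be the skipped value $\bar\kappa\notin\rng(j)$. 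This is the same mechanism underlying Kunen's proof that there is no elementary $j\colon V_{\gamma+2}\to V_{\gamma+2}$, here transported to the embedding $j_M$.
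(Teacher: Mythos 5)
Your strategy is genuinely different from the paper's: the paper splits into cases, handling inaccessible $\lambda$ by collapsing $M\cap V_{\gamma+2}$ onto $V_{\gamma+2}$ at a fixed point $\gamma$ and quoting Kunen's inconsistency, and successor $\lambda$ by Zapletal's scale/pcf argument; you instead run the Erd\H{o}s--Hajnal $\omega$-J\'onsson argument uniformly for all regular $\lambda$. Most of your mechanics are correct: $\otp(M\cap\lambda)=\lambda$ does follow from regularity as you say; $j(t)=j[t]$ for countable $t\in H_{\bar\theta}$ holds because the increasing enumeration $e\colon\omega\to t$ satisfies $j(e)(n)=j(e)(j(n))=j(e(n))$ and $j(\omega)=\omega$ (this, rather than ``$|t|<\crit(j)$,'' is the precise reason, but your conclusion is right); and $\bar\kappa=\crit(j_M)\notin\rng(j_M)=M$ is correct.

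The genuine gap is exactly the one you flag and do not close: you need the $\omega$-J\'onsson function $\bar f\colon[\lambda]^\omega\to\lambda$ to be an \emph{element} of $H_{\bar\theta}=\dom(j)$, equivalently $f=j(\bar f)$ to be an element of $M$. Any such function has cardinality $\lambda^{\aleph_0}$, so this requires $\lambda^{\aleph_0}<\bar\theta\leq\theta$, which is not part of the hypotheses: $\theta$ may be as small as $\lambda^{+}$, while a priori $\lambda^{\aleph_0}>\lambda^{+}$ (for instance $\lambda=\mu^{+}$ with $\cf(\mu)=\omega$ and $\mu^{\aleph_0}$ large). In that situation $[\lambda]^\omega$ is not an element of $H_\theta$ and no total J\'onsson function for $\lambda$ lives in $H_\theta$, let alone in $M$, so the sentence ``by Erd\H{o}s--Hajnal there is, in $H_{\bar\theta}$, an $\omega$-J\'onsson function'' is unjustified as written. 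The natural repair is to first prove $\lambda^{\aleph_0}=\lambda$ for regular $\lambda\in M$ above $\kappa_M$, using that $\kappa_M$ is strongly inaccessible (so $2^{\aleph_0}<\kappa_M$) together with the covering-matrix/$\SCH$ machinery of Section~\ref{subsect.SCH} applied to $M$ (which is amply internally unbounded, being $\aleph_0$-guessing); but that is a substantial additional ingredient that your write-up neither supplies nor cites. If you add that computation, or restrict the claim to $\theta$ with $\lambda^{\aleph_0}<\theta$, the rest of your argument goes through and gives a cleaner, case-free proof than the one in the paper.
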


\begin{proof}
If $M\prec H_\theta$ is a $0$-guessing model such that $M\cap\lambda$ is unbounded 
in $\lambda$ for some regular $\lambda\in M$, we have that the transitive collapse of 
$M\cap H_\lambda$ is $H_\lambda$.
Thus for any $A\in M\cap P(H_{\lambda})$ we get an elementary embedding 
\[
j\colon \langle H_\lambda,\in, \pi_M(A)\rangle\to\langle H_\lambda,\in, A\rangle
\]
with critical point $\pi_M(\kappa_M)<\lambda$ given by $j=\pi_M^{-1}$.
We want to reach a contradiction mimicking by a Kunen's celebrated  result that there is no elementary $j:V\to V$.

Suppose first that $\lambda$ is inaccessible. 
Then there is a club subset of $\gamma<\lambda$ such that $j(\gamma)=\gamma$. 
For any such $\gamma$ we would get that $M\cap V_{\gamma+2}$ collapses to $V_{\gamma+2}$. 
This would give that 
\[
\pi_{M\restriction V_{\eta+2}}^{-1}\colon V_{\eta+2}\rightarrow V_{\eta+2}
\]
is elementary. This is impossible by~\cite[Corollary 21.24]{Kan09}.

Suppose now $\lambda$ is a successor. In this case we will argue mimicking the proof of 
Kunen's inconsistency given by Zapletal~\cite{Zap96}.
We recall the following notions of pcf-theory (the reader can find in 
Abraham and Magidor's chapter in~\cite{HST} the basic development of pcf-theory): 
A scale on $(\prod_n\gamma_n,<^*)$ is a family of functions well-ordered under $<^*$ and 
cofinal with respect to $<^*$, where $f<^*g$ if $f(n)<g(n)$ for eventually all $n$.
The $<^*$-exact upper bound of a family of functions 
$\mathcal{F}\subseteq\Ord^\omega$ is a least upper bound of $\mathcal{F}$ under $<^*$ 
when such least upper bound can be defined. 
If $\mathcal{F}$ has a least upper bound, then it is unique modulo finite changes. 
Shelah showed that for any $\eta$ singular cardinal of countable cofinality there is a $<^*$-well-ordered 
family $\mathcal{F}\subseteq \eta^\omega$ of order type $\eta^+$ which has a least upper 
bound $f\in\eta^\omega$ such that $\sup_n f(n)=\eta$ and $f(n)$ is a regular cardinal for all $n$. 
A $\mathcal{F}$ with these properties is called a scale on $\prod_n f(n)$.

Now we proceed as follows:
We can assume that $\lambda$ is the least regular such that $j(\lambda)=\lambda=\gamma^+=j(\gamma)^+$. 
Notice that for any cardinal $\eta$, $j(\eta)=\eta$ if $j(\eta^+)=\eta^+$. 
So the minimality of $\lambda$ entails that $\gamma$ is the least fixed point of $j$.
This means that $\gamma=\sup_n j^n( \crit ( j ) )$. 
Now by Shelah's result a scale $\mathcal{G}\subseteq \gamma^\omega$ of order type $\lambda$ 
with exact upper bound $g$ exists in $H_\delta$ for any $\delta>\lambda$. Since $\theta>\lambda$ 
and $M\prec H_\theta$, such a $\mathcal{G}$ with exact upper bound $g$ can be found in $M$. 
Then $\pi_M(\mathcal{G})=\mathcal{F}$ by elementarity is again a scale on $\prod_n f(n)$ where $f=\pi_M(g)$.
Now let us consider $j=\pi_{M\restriction H_\lambda\cup\{\mathcal{G}\}}^{-1}$.
Then
\[
j\colon \langle H_\lambda,\in, \mathcal{F}\rangle\to\langle H_\lambda,\in, \mathcal{G}\rangle
\]
is elementary and $ j [ \mathcal{F} ] = \{ j ( f_\xi ) : \xi < \lambda\}$ will have as an upper bound
the function 
\[
h ( n )=\sup j[f(n)]=j[\gamma_n]<j(\gamma_n)=j(f)(n)=g(n).
\] 
Moreover $j[\mathcal{F}]\subseteq j(\mathcal{F})=\mathcal{G}=\{g_\alpha:\alpha<j(\lambda)=\lambda\}$ 
is a cofinal subset so the two families of increasing functions will have the same exact upper bound 
$j(f)=g$. 
This is impossible since any $f\in j[\mathcal{F}]$ is everywhere dominated by $h$ which is 
dominated by $g$ modulo finite, so $g$ cannot be an exact upper bound for $j [ \mathcal{F}]$.
\end{proof} 

\begin{remark}
If the proof of the proposition
is recast using $\delta$-guessing model, one can rule out the case $\lambda$ successor by essentially the same argument. 
However if $M\prec H_\theta$ is a $\delta$-guessing model and $\lambda\in M$ is inaccessible 
such that $\otp(M\cap\lambda)=\lambda$, then $\pi_M[M\cap H_\lambda]$ could be different from $H_\lambda$. 
For this reason we cannot reproduce for an arbitrary $\delta$-guessing models the proof of the proposition in this case.
\end{remark}

\subsection{A proof of $\SCH$}\label{subsect.SCH}
We give a proof of $\SCH$ assuming there are $\aleph_1$-internally unbounded, 
$\aleph_1$-guessing models $M$ of size less than $\kappa_M$. 
This assumption follows for example from $\PFA$ and also from the existence 
of a supercompact cardinal.
We recall the following definition from~\cite{covering_properties}:

\begin{definition}\label{def.covering_matrix}
Suppose $\lambda$ is a cardinal with $\cf \lambda = \omega$.
\[
\mathcal{D} = \langle D(n, \alpha) :  n < \omega,\ \alpha < \lambda^+ \rangle
\]
is called a \emph{strong covering matrix on $\lambda^+$} if
\begin{enumerate}[label=(\roman*)]
\item 
$\bigcup_{n < \omega} D(n, \alpha) = \alpha$ for all $\alpha < \lambda^+$,
\item 
$D(m, \alpha) \subset D(n, \alpha)$ for all $\alpha < \lambda^+$ and $m < n < \omega$,
\item 
for all $\alpha < \alpha' < \lambda^+$ there is $n < \omega$ such that 
$D(m, \alpha) \subset D(m, \alpha')$ for all $m \geq n$,
\item 
for all $x \in P_{\omega_1} \lambda^+$ there is $\gamma_x < \lambda^+$ 
such that for all $\alpha \geq \gamma_x$ there is $n < \omega$ such that
$D(m, \alpha) \cap x = D(m, \gamma_x) \cap x$ for all $m \geq n$,\label{property.covering_matrix}
\item 
$|D(n, \alpha)| < \lambda$ for all $\alpha < \lambda^+$ and $n < \omega$.
\end{enumerate}
\end{definition}

The following simple facts are proved in~\cite{covering_properties}:
\begin{fact}\label{fact.coveringmatrix}
Assume $\lambda>2^{\aleph_0}$ has countable cofinality. 
Then  there is a strong covering matrix $\mathcal{D}$ on $\lambda^+$.
\end{fact}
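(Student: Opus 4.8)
The plan is to obtain the matrix from the theory of minimal walks on $\lambda^+$, and to extract the delicate stabilization property~\ref{property.covering_matrix} afterwards, purely from the cardinal arithmetic hypothesis $\lambda>2^{\aleph_0}$. First I would fix an increasing sequence $\langle\lambda_n:n<\omega\rangle$ of regular cardinals cofinal in $\lambda$, and a $C$-sequence $\langle C_\alpha:\alpha<\lambda^+\rangle$ with $\otp(C_\alpha)\le\lambda$ for every limit $\alpha$. Let $\rho_1\colon[\lambda^+]^2\to\lambda$ be the associated maximal-weight function (its values lie below $\lambda$, since along any walk each term $\otp(C_\gamma\cap\beta)$ is a proper initial part of some $C_\gamma$), and set
\[
D(n,\alpha)=\{\beta<\alpha:\rho_1(\beta,\alpha)<\lambda_n\}.
\]
Conditions (ii) and (v) are then immediate: $D(m,\alpha)\subseteq D(n,\alpha)$ for $m<n$ because $\lambda_m<\lambda_n$, and $|D(n,\alpha)|\le\lambda_n<\lambda$ since each level set $\{\beta:\rho_1(\beta,\alpha)\le\nu\}$ has size at most $|\nu|$. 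Condition (i) holds because $\rho_1(\beta,\alpha)<\lambda=\sup_n\lambda_n$ for every $\beta<\alpha$, so $\beta\in D(n,\alpha)$ for all large $n$.

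The \emph{coherence} condition (iii) is where the walk structure does its work, and it is the combinatorial core of the construction. It follows from the subadditivity of $\rho_1$, namely $\rho_1(\beta,\alpha')\le\max\{\rho_1(\beta,\alpha),\rho_1(\alpha,\alpha')\}$ for $\beta<\alpha<\alpha'$. Given $\alpha<\alpha'$, I would choose $n$ with $\lambda_n>\rho_1(\alpha,\alpha')$; then for every $m\ge n$ and every $\beta\in D(m,\alpha)$ we have $\rho_1(\beta,\alpha')\le\max\{\rho_1(\beta,\alpha),\rho_1(\alpha,\alpha')\}<\lambda_m$, so $\beta\in D(m,\alpha')$. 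Thus $D(m,\alpha)\subseteq D(m,\alpha')$ for all $m\ge n$, which is exactly~(iii).

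Finally I would derive the strong property~\ref{property.covering_matrix}, and this is the only place the hypothesis $\lambda>2^{\aleph_0}$ is used. Fix $x\in P_{\omega_1}\lambda^+$; for each $\alpha$ with $x\subseteq\alpha$ consider the sequence $s_\alpha=\langle D(m,\alpha)\cap x:m<\omega\rangle$ of subsets of $x$. By (ii) each $s_\alpha$ is increasing, by (i) its union is $x$, and by (iii) the map $\alpha\mapsto s_\alpha$ is nondecreasing for the order of eventual inclusion; hence $\alpha\mapsto[s_\alpha]$ is nondecreasing into the partial order of tail-equivalence classes of $\omega$-sequences from $P(x)$, and its image is therefore a chain. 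Since $|x|\le\aleph_0$ there are at most $(2^{\aleph_0})^{\aleph_0}=2^{\aleph_0}$ such classes, so this chain has size at most $2^{\aleph_0}<\lambda^+$. As $\lambda^+$ is regular with $\lambda^+>2^{\aleph_0}$, a nondecreasing map of $\lambda^+$ into this chain has convex fibres, and since $\lambda^+$ is not a union of fewer than $\lambda^+$ bounded sets one of these fibres must be a final segment; the map is thus eventually constant. The stabilization point provides a $\gamma_x$, which we may take $\ge\sup x$, such that $[s_\alpha]=[s_{\gamma_x}]$, i.e.\ $D(m,\alpha)\cap x=D(m,\gamma_x)\cap x$ for all large $m$, for every $\alpha\ge\gamma_x$. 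The main obstacle is thus entirely concentrated in condition~(iii): once a coherent covering matrix is in hand, property~\ref{property.covering_matrix} falls out of the counting argument above, whose whole point is that $\lambda>2^{\aleph_0}$ makes the number of possible tail-patterns of a fixed countable $x$ small compared with $\lambda^+$.
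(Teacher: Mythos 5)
The overall architecture of your argument is sound in three of its four components: properties (i), (ii) and (v) do follow from the stated features of a maximal-weight function, and your derivation of the stabilization property (iv) from (i)--(iii) together with $\lambda>2^{\aleph_0}$ --- counting tail-equivalence classes of the sequences $\langle D(m,\alpha)\cap x : m<\omega\rangle$ and using regularity of $\lambda^+$ to find a fibre which is a final segment --- is correct. The fatal problem sits exactly where you locate the ``combinatorial core'': the maximal weight function $\rho_1$ is \emph{not} subadditive. The inequality $\rho_1(\beta,\alpha')\le\max\{\rho_1(\beta,\alpha),\rho_1(\alpha,\alpha')\}$ fails in general, because the walk from $\alpha'$ to $\beta$ need not pass through $\alpha$: at the first step where $C_{\xi_i}\cap[\beta,\alpha)\ne\emptyset$ the walk drops to some $\gamma=\min(C_{\xi_i}\setminus\beta)<\alpha$, and from then on it accrues the weights of the walk from $\gamma$ to $\beta$, which are completely unconstrained by $\rho_1(\beta,\alpha)$ and $\rho_1(\alpha,\alpha')$. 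Already on $\omega_1$ one can choose the $C$-sequence so that $\rho_1(\beta,\alpha)=0$, $\rho_1(\alpha,\alpha')=1$ and $\rho_1(\beta,\alpha')$ is arbitrarily large. Subadditivity is a property of Todor\v{c}evi\'c's function $\rho$, whose recursion takes a supremum over all $\rho(\xi,\alpha)$ for $\xi\in C_{\alpha'}\cap\alpha$ precisely to force it; but at $\lambda^+$ for $\lambda$ singular that supremum need not stay below $\lambda$, and keeping it below $\lambda$ with small sublevel sets requires square-like coherence of the $C$-sequence --- not available in ZFC, and in the context of this paper incompatible with the hypotheses (guessing models) under which the covering matrix is to be used. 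So condition (iii) is unproved, and with it (iv), which you derive from (iii).

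The standard construction (the one in the reference the paper cites for this fact) avoids walks altogether: fix surjections $e_\alpha\colon\lambda\to\alpha$ for $\lambda\le\alpha<\lambda^+$ and define the matrix by recursion on $\alpha$, for instance
\[
D(n,\alpha)=e_\alpha[\lambda_n]\cup\bigcup\{D(n,\beta)\cup\{\beta\}:\beta\in e_\alpha[\lambda_n]\},
\]
with the obvious modification for $\alpha<\lambda$ and after closing under the earlier columns so as to secure (ii). Then (iii) is immediate: given $\alpha<\alpha'$, any $n$ with $\alpha\in e_{\alpha'}[\lambda_n]$ works, since the recursion explicitly inserts $D(m,\alpha)$ into $D(m,\alpha')$ for every $m\ge n$; properties (i) and (v) follow by induction on $\alpha$ ($|D(n,\alpha)|\le\lambda_n$ throughout), and your counting argument for (iv) then goes through verbatim. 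In short: keep your proof of (iv), but replace the $\rho_1$-based definition of the matrix by the recursive one.
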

\begin{fact}\label{fact.coveringmatrix1}
Assume that for all $\lambda>2^{\aleph_0}$ of countable cofinality, there is a 
strong covering matrix $\mathcal{D}$ on $\lambda^+$ and an unbounded subset $A$ of $\lambda^+$ such that
$P_{\omega_1}A$ is covered by $\mathcal{D}$. 
Then $\SCH$ holds.
\end{fact}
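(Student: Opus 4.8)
The plan is to reduce $\SCH$ to the single family of instances $\lambda^{\aleph_0}=\lambda^+$, where $\lambda$ ranges over singular cardinals of countable cofinality above $2^{\aleph_0}$, and then to extract each such instance from the covering hypothesis as a bound on $\cf([\lambda^+]^{\aleph_0},\subseteq)$. First I would run an induction on the singular cardinal $\kappa>2^{\aleph_0}$ at which $\SCH$ is to be verified. When $\cf(\kappa)>\omega$, Silver's theorem lets me deduce $\SCH$ at $\kappa$ from $\SCH$ at all singular cardinals below $\kappa$, which is the inductive hypothesis; so the whole content is concentrated in the case $\cf(\kappa)=\omega$. The induction also furnishes, for such $\kappa$, that $\mu^{\aleph_0}<\kappa$ for every $\mu<\kappa$, a point I would check separately by splitting into $\mu$ regular, $\mu$ singular, and $\mu\le 2^{\aleph_0}$, using $2^{\aleph_0}<\kappa$ together with the inductive $\SCH$.

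So fix $\lambda$ singular with $\cf(\lambda)=\omega$ and $\lambda>2^{\aleph_0}$, assume $\mu^{\aleph_0}<\lambda$ for all $\mu<\lambda$, and let $\mathcal{D}$ and $A$ witness the hypothesis. The core observation is that the only features of $\mathcal{D}$ that matter here are that it consists of at most $\lambda^+$ sets, each of size $<\lambda$, and that every countable subset of $A$ is contained in one of them. Indeed, consider the family
\[
\mathcal{E}=\{\, D(n,\alpha)\cap A : n<\omega,\ \alpha<\lambda^+\,\},
\]
which has cardinality at most $\lambda^+$, each member of size $<\lambda$. Since $P_{\omega_1}A$ is covered by $\mathcal{D}$, every infinite countable $x\subseteq A$ lies in some $D(n,\alpha)$, hence in $[\,D(n,\alpha)\cap A\,]^{\aleph_0}$. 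For each $E\in\mathcal{E}$ I would pick a cofinal family in $([E]^{\aleph_0},\subseteq)$ of size $\cf([E]^{\aleph_0},\subseteq)\le |E|^{\aleph_0}<\lambda$; the union of these families is cofinal in $([A]^{\aleph_0},\subseteq)$ and has size at most $\lambda^+\cdot\lambda=\lambda^+$.

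Since $|A|=\lambda^+$, this shows $\cf([\lambda^+]^{\aleph_0},\subseteq)\le\lambda^+$. A cofinal family $\{C_i:i<\lambda^+\}$ of countable sets then covers every countable subset of $\lambda^+$, and a fixed countable set has at most $2^{\aleph_0}$ subsets, so
\[
\lambda^{\aleph_0}=(\lambda^+)^{\aleph_0}=|[\lambda^+]^{\aleph_0}|\le\lambda^+\cdot 2^{\aleph_0}=\lambda^+,
\]
using Hausdorff's formula $(\lambda^+)^{\aleph_0}=\lambda^+\cdot\lambda^{\aleph_0}=\lambda^{\aleph_0}$ and $2^{\aleph_0}<\lambda$. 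As $\lambda^{\aleph_0}\ge\lambda^+$ always holds when $\cf(\lambda)=\omega$, we obtain $\lambda^{\aleph_0}=\lambda^+$, which is $\SCH$ at $\lambda$. Feeding this back into the induction closes the argument.

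I expect the genuinely delicate points to be bookkeeping rather than conceptual: verifying that the inductive hypothesis really delivers $\mu^{\aleph_0}<\lambda$ for all $\mu<\lambda$ (the regular case uses that every $f\colon\omega\to\mu$ is bounded, the singular case uses $\mu^{\aleph_0}\le\mu^{\cf(\mu)}=\mu^+<\lambda$), and keeping the cardinal arithmetic $\lambda^+\cdot\lambda=\lambda^+$ and $(\lambda^+)^{\aleph_0}=\lambda^{\aleph_0}$ straight. The one external input is Silver's theorem for the uncountable-cofinality case; everything else is the short covering computation above. It is worth noting that clauses (i)--(iv) of the covering matrix play no role in this implication --- they are needed only to produce the matrix and the covered set $A$ --- so the proof uses solely clause (v) together with the covering of $P_{\omega_1}A$.
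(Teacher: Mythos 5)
Your proof is correct and follows essentially the same route as the source the paper defers to for this fact (the paper only states it and cites the reference on covering properties): reduce \(\SCH\) to the instances \(\lambda^{\aleph_0}=\lambda^+\) at countable cofinality via Silver, then use the covering of \(P_{\omega_1}A\) together with clause (v) of Definition~\ref{def.covering_matrix} and the inductive bound \(\mu^{\aleph_0}<\lambda\) to get \(\cf([\lambda^+]^{\aleph_0},\subseteq)\leq\lambda^+\) and hence \(\lambda^{\aleph_0}=\lambda^+\). Your closing remark that clauses (i)--(iv) of the covering matrix are not needed for this implication is also accurate.
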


\begin{lemma}\label{lemma.covering_matrix_slender}
Suppose $\lambda$ is a cardinal with $\cf \lambda = \omega$ and 
$\mathcal{D}$ is a strong covering matrix on $\lambda^+$.
Let $\theta$ be sufficiently large.
Suppose $M \in P_{\omega_2} H_\theta$ is an $\aleph_1$-internally unbounded 
model and $\mathcal{D} \in M$.
Then there is $n < \omega$ such that $D(m, \sup (M \cap \lambda^+) ) \cap x \in M$ 
for all $x \in P_{\omega_1} \lambda^+ \cap M$ and $m \geq n$.
\end{lemma}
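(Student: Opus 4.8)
The plan is to argue by contradiction and to use $\aleph_1$-internal unboundedness to absorb a whole countable family of potential counterexamples into a single element of $M$, thereby upgrading the \emph{pointwise} thresholds supplied by property~\ref{property.covering_matrix} into a \emph{uniform} one.

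Write $\beta=\sup(M\cap\lambda^+)$; since $|M|\le\aleph_1<\lambda^+$ we have $\beta<\lambda^+$, so $D(m,\beta)$ is defined for every $m$. First I would record two preliminary facts. (a) Every countable $x\in M$ satisfies $x\subseteq M$: fixing a surjection $f\colon\omega\to x$ in $M$ and using $\omega\subseteq M$ gives $x=\{f(n):n<\omega\}\subseteq M$; in particular $P_{\omega_1}\lambda^+\cap M\subseteq P_{\omega_1}(M)$. (b) For a fixed $x\in P_{\omega_1}\lambda^+\cap M$ there is a threshold $n_x$ beyond which $D(m,\beta)\cap x\in M$. Indeed, the least $\gamma_x$ witnessing property~\ref{property.covering_matrix} for $x$ is definable from $x$ and $\mathcal D\in M$, so $\gamma_x\in M\cap\lambda^+$ and hence $\gamma_x\le\beta$; applying property~\ref{property.covering_matrix} with $\alpha=\beta$ gives $n_x$ with $D(m,\beta)\cap x=D(m,\gamma_x)\cap x$ for all $m\ge n_x$, and since $\gamma_x,m,\mathcal D,x\in M$ the right-hand side lies in $M$. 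Thus the lemma amounts to the assertion that the thresholds $n_x$ are bounded as $x$ ranges over $P_{\omega_1}\lambda^+\cap M$.

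So suppose they are unbounded. Then for each $k<\omega$ I can pick $x_k\in P_{\omega_1}\lambda^+\cap M$ and $m_k\ge k$ with $D(m_k,\beta)\cap x_k\notin M$. The decisive step is the absorption: by (a) the set $z=\bigcup_k x_k$ is a countable subset of $M$, so $\aleph_1$-internal unboundedness (Definition~\ref{def.intclos}) yields $w\in M\cap P_{\omega_1}(M)$ with $z\subseteq w$, and then $x^*=w\cap\lambda^+\in M$ (note $\lambda^+\in M$ as $\mathcal D\in M$) is a single element of $P_{\omega_1}\lambda^+\cap M$ with $x_k\subseteq x^*$ for all $k$. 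Applying fact (b) to $x^*$ produces $n^*$ with $D(m,\beta)\cap x^*\in M$ for all $m\ge n^*$; choosing $k\ge n^*$ we get $m_k\ge n^*$, whence
\[
D(m_k,\beta)\cap x_k=\bigl(D(m_k,\beta)\cap x^*\bigr)\cap x_k\in M,
\]
since both $D(m_k,\beta)\cap x^*$ and $x_k$ lie in $M$. This contradicts the choice of $x_k,m_k$, so the thresholds are bounded and the desired $n$ exists.

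I expect the only genuinely nontrivial point to be the absorption step: property~\ref{property.covering_matrix} alone gives a threshold for each individual $x$, and there is no a priori reason these thresholds should be bounded. It is exactly $\aleph_1$-internal unboundedness that lets a countable sequence of counterexamples be swallowed by one $x^*\in M$, converting the countably many pointwise thresholds into a single uniform one. Everything else is bookkeeping with elementarity and the definitional properties of the covering matrix.
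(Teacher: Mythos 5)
Your proposal is correct and follows essentially the same argument as the paper: assume a failure, use $\aleph_1$-internal unboundedness to absorb the countably many counterexample sets into a single countable $x^*\in M$, apply property (iv) of the covering matrix to $x^*$ via $\gamma_{x^*}\in M$, and intersect back down to reach a contradiction. Your handling of the negation (taking $m_k\ge k$ rather than a counterexample at each exact $n$) is slightly more careful than the paper's, but the idea is identical.
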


\begin{proof}
Assume not and for each $n$ pick $x_n\in M\cap P_{\omega_1}\lambda^+$ 
such that $D(n, \sup (M \cap \lambda^+) ) \cap x_n \not\in M$. 
By $\aleph_1$-internally unboundedness of \( M \), there is a
countable $x\in M$ containing all the $x_n$. Now $\gamma_x\in M$ by elementarity and thus there is $n_0$ such that
$D(n, \sup (M \cap \lambda^+) )\cap x= D(n, \gamma_x )\cap x\in M$ for all $n\geq n_0$. 
This means that  
\[
D(n, \sup (M \cap \lambda^+) )\cap x_n = 
D ( n, \sup (M \cap \lambda^+) ) \cap x \cap x_n\in M
\]
since $D(n, \sup (M \cap \lambda^+) )\cap x\in M$ and $x_n\in M$. 
This is the desired contradiction.
\end{proof}

\begin{theorem}\label{the.GMSCH}
Suppose that for all regular $\theta \geq \kappa$, there are stationarily many $\aleph_1$-guessing models $M\in\mathrm {G}_\kappa^{H_\theta}$ of size $\aleph_1$ 
which are $\aleph_1$-internally unbounded.
Then \SCH\ holds.
\end{theorem}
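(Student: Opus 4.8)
The plan is to verify the hypotheses of Fact~\ref{fact.coveringmatrix1}. So I fix a singular cardinal $\lambda$ with $\cf(\lambda)=\omega$ and $\lambda>2^{\aleph_0}$; the substantive case is $\lambda^+>\kappa$ (when $\kappa=\aleph_2$ every such $\lambda$ satisfies this), and I treat it. By Fact~\ref{fact.coveringmatrix} there is a strong covering matrix $\mathcal{D}=\langle D(n,\alpha)\rangle$ on $\lambda^+$, so it suffices to produce an unbounded $A\subseteq\lambda^+$ such that every countable subset of $A$ lies inside a single entry $D(m,\alpha)$. I fix a regular $\theta$ large enough for Lemma~\ref{lemma.covering_matrix_slender} with $\mathcal{D}\in H_\theta$, and use the hypothesis to pick an $\aleph_1$-guessing, $\aleph_1$-internally unbounded model $M\prec H_\theta$ of size $\aleph_1$ with $\kappa_M=\kappa$ and $\mathcal{D},\lambda\in M$. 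Write $\gamma=\sup(M\cap\lambda^+)$; since $M$ is a guessing model it is closed under countable suprema by Proposition~\ref{prop.basiguessing}\ref{prop.basiguessing-6}, and $\gamma\notin M$, so $\cf(\gamma)=\aleph_1$.

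Next I would isolate a single column of the matrix that is simultaneously slender over $M$ and has a cofinal trace in $M$. Lemma~\ref{lemma.covering_matrix_slender} supplies $n_0$ with $D(m,\gamma)\cap z\in M$ for all countable $z\in M$ and all $m\geq n_0$. For cofinality of the trace, I enumerate a cofinal increasing sequence $\langle\xi_\eta:\eta<\omega_1\rangle$ of $M\cap\lambda^+$ below $\gamma$; each $\xi_\eta$ lies in the increasing union $\bigcup_m D(m,\gamma)=\gamma$, hence first appears at some finite level, and a pigeonhole on the resulting map $\omega_1\to\omega$ yields $n^*$ for which $\{\xi_\eta:\xi_\eta\in D(n^*,\gamma)\}$ is uncountable, hence cofinal in $\gamma$ (an uncountable subset of a cofinal $\omega_1$-sequence is cofinal as $\cf(\gamma)=\omega_1$). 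Setting $n=\max(n_0,n^*)$, the set $d=D(n,\gamma)$ is slender over $M$ and $d\cap M$ is cofinal in $\gamma$.

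The key step is to manufacture the unbounded covered set using the guessing property. Since $d\cap z=D(n,\gamma)\cap(z\cap\lambda^+)\in M$ for every countable $z\in M$, the set $d$ is $(\aleph_1,M)$-approximated, so there is $e\in M\cap P(\lambda^+)$ with $e\cap M=d\cap M$. Because $e\cap M=d\cap M$ is cofinal in $\gamma=\sup(M\cap\lambda^+)$ while $\gamma\notin M$, the set $e$ cannot be bounded in $\lambda^+$ (a bound would lie in $M\cap\lambda^+$, contradicting cofinality of the trace), so $A:=e$ is unbounded. To see $P_{\omega_1}A$ is covered by $\mathcal{D}$ I argue by reflection: it is enough to show $M\models$ ``every countable $x\subseteq e$ is contained in some $D(m,\alpha)$'', as this statement has parameters $e,\mathcal{D},\lambda^+\in M$ and therefore holds in $H_\theta$, i.e. is true. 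If $M$ thought some countable $x\subseteq e$ were uncovered, then $x\in M$ together with $\omega_1\subseteq M$ force $x\subseteq M$, whence $x\subseteq e\cap M=d\cap M\subseteq D(n,\gamma)$; applying the coherence condition of Definition~\ref{def.covering_matrix} to $x$ inside $M$ produces $\gamma_x\in M$ and some $m\geq n$ with $D(m,\gamma)\cap x=D(m,\gamma_x)\cap x$, and since $x\subseteq D(n,\gamma)\subseteq D(m,\gamma)$ this gives $x\subseteq D(m,\gamma_x)$ with $m,\gamma_x\in M$ --- contradicting that $M$ saw $x$ as uncovered. Thus $A=e$ witnesses the hypothesis of Fact~\ref{fact.coveringmatrix1}, and \SCH\ follows.

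The main obstacle is exactly the passage from the purely local data attached to one model --- all of which lives below $\gamma<\lambda^+$ --- to a genuinely unbounded covered set. This is where the guessing property does the essential work, beyond the internal unboundedness already exploited in Lemma~\ref{lemma.covering_matrix_slender}: it replaces the external trace $D(n,\gamma)\cap M$, indexed by the point $\gamma\notin M$, with an honest set $e\in M$, which is forced to be unbounded precisely when that trace is cofinal in $\gamma$. The two supporting delicacies are the cofinality/pigeonhole argument guaranteeing a cofinal trace concentrated at a single finite level, and the use of the matrix's coherence to trade the out-of-model index $\gamma$ for an in-model threshold $\gamma_x$, so that the covering statement becomes reflectable through $M$.
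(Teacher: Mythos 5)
Your proposal is correct and follows essentially the same route as the paper: reduce via Facts~\ref{fact.coveringmatrix} and~\ref{fact.coveringmatrix1} to producing an unbounded $A$ with $P_{\omega_1}A$ covered, apply Lemma~\ref{lemma.covering_matrix_slender} to an $\aleph_1$-internally unbounded guessing model $M$, use the uncountable cofinality of $\gamma=\sup(M\cap\lambda^+)$ to find a single level $n$ whose column $D(n,\gamma)$ has cofinal trace in $M$, guess it by some $e\in M$ (the paper's $A_n$), and reflect the covering statement through $M$. The only deviations are cosmetic --- you pick the level before invoking the guessing property rather than after, and your final verification routes through the coherence clause of Definition~\ref{def.covering_matrix} to exhibit a covering witness inside $M$, where the paper simply observes $x\subseteq D(n,\gamma)$ in $H_\theta$ and pushes down by elementarity.
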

\begin{proof}
Let $\lambda$ be a cardinal with $\cf \lambda = \omega$.
By Fact~\ref{fact.coveringmatrix} there exists a strong covering matrix on 
$\lambda^+$ and by Fact~\ref{fact.coveringmatrix1} it suffices to show there is an unbounded 
$A \subset \lambda^+$ such that $P_{\omega_1} A$ is covered by $\mathcal{D}$, that is,
for all $x \in P_{\omega_1} A$ there is $\alpha < \lambda^+$ and $n < \omega$ 
such that $x \subset D(n, \alpha)$.

Let $\theta$ be sufficiently large.
Pick  an $\aleph_1$-guessing model $M\prec H_\theta$ which is $\aleph_1$-internally 
unbounded and is also such that $\mathcal{D}\in M$.
By Proposition~\ref{prop.basiguessing} we may assume $\cf \sup (M \cap \lambda^+) \geq \omega_1$.
By Lemma~\ref{lemma.covering_matrix_slender} there is $n' < \omega$ such that 
$D ( m, \sup ( M \cap \lambda^+ ) ) \cap x \in M$ for all $x \in P_{\omega_1} \lambda^+ \cap M$ and $m \geq n'$.
As $M$ is an $\aleph_1$-guessing model, this means that for all $m \geq n'$ there is 
$A_m \in M$ such that $D ( m , \sup (M \cap \lambda^+) ) = A_m \cap M$.

Since $\cf \sup (M \cap \lambda^+) \geq \omega_1$ and 
$\medcup \{ D(m, \sup (M \cap \lambda^+)) :  m < \omega \} = \sup (M \cap \lambda^+)$ 
there is an $n' \leq n < \omega$ such that $A_n$ is unbounded in $\sup (M \cap \lambda^+)$.
As $A_n \in M$, this implies $A_n$ is unbounded in $\lambda^+$.

Let $x \in M\cap P_{\omega_1} A_n $. 
Then 
\[
x=A_n \cap x = D(n, \sup (M \cap \lambda^+))\cap x \subseteq  D(n,  \sup (M \cap \lambda^+)) .
\] 
Thus $H_\theta$ models that $x$ is covered by some $D(n,\alpha)$. 
Since $x\in M$, also $M$ models it. Since this occurs for an arbitrary 
$x\in M\cap P_{\omega_1} X$, $M$ models $P_{\omega_1} A_n$ is covered by $\mathcal{D}$, 
whence it really holds.
\end{proof}

\section{Conclusions and open problems}
We close this paper with a list of open problems and some guesses on their possible solutions:

\begin{enumerate}
\item 
Assuming $\PFA$ in $W$, $\mathrm {G}^{\aleph_1}_{\aleph_2} W_\theta$ is stationary 
for all inaccessible $\theta$. 
Is it possible to build a transitive inner model $V$ of $W$ such that $\aleph_2$ is supercompact in $V$? 
Note that this would be the case if in $V$, $\mathrm {G}^{\aleph_0}_{\aleph_2} V_\theta$ 
is stationary for all inaccessible $\theta$. 
In \cite{VIAWEI10} and \cite{weiss} there are several positive partial answers
when we assume that $W$ is a forcing extension of $V$. 
A possible attempt to overcome this latter assumption would be to isolate in 
models of $\MM$ some stationary subset $T$ of  $\mathrm {G}^{\aleph_1}_{\aleph_2} W_\theta$, 
and then try to argue that $\aleph_2$ is $\theta$-supercompact in $L(\{M\cap\theta: M\in T\})$ 
or in some simple transitive class model of $\ZFC$ defined using  $\{M\cap\theta: M\in T\}$ 
as a parameter to define it.
\item 
Is it at all consistent that there are $\delta$-guessing models which are not $\aleph_1$-guessing 
for some $\delta > \aleph_1$? 
It seems reasonable to expect this is the case and could be achieved using 
Krueger's mixed support iterations techniques as developed in~\cite{krueger}.
\item 
Is it consistent that for a guessing model $M$, $\kappa_M$ is the successor of a singular cardinal? 
Or that for a singular $\delta$, there are $\delta$-guessing models $M$ which are not $\xi$-guessing for any $\xi<\delta$? 
I think that this shouldn't be possible.
\item 
Can the isomorphism of types theorem be proved also for $\delta$-guessing models which 
are not $\delta$-internally club? 
\item 
Given $f\colon \kappa\rightarrow H_\kappa$, what is the class of functions 
$g\colon P(H_\theta)\rightarrow H_\theta$ such that 
$\{M\in\mathrm {G}_\kappa^{H_\theta}: f(M\cap\kappa)=g(M)\}$ is stationary? 
Can the supercompactness of $\kappa$ be characterized by the existence of an 
$f\colon \kappa\rightarrow H_\kappa$ such that $\{M\prec H_\theta: f(M\cap\kappa)=g(M)\}$ 
is stationary for a large family of $g\colon P_\kappa H_\theta\rightarrow H_\theta$?
\end{enumerate}

\bibliographystyle{amsplain}
\bibliography{APLMV**}

\providecommand{\bysame}{\leavevmode\hbox to3em{\hrulefill}\thinspace}
\providecommand{\MR}{\relax\ifhmode\unskip\space\fi MR }
\providecommand{\MRhref}[2]{%
  \href{http://www.ams.org/mathscinet-getitem?mr=#1}{#2}
}
\providecommand{\href}[2]{#2}
\begin{thebibliography}{10}

\bibitem{Bur97}
Douglas~R. Burke, \emph{Precipitous towers of normal filters}, J. Symbolic
  Logic \textbf{62} (1997), no.~3, 741--754. \MR{1472122 (2000d:03114)}

\bibitem{cox10}
S.~Cox, \emph{The diagonal reflection principle}, to appear in the Proceedings
  of the AMS.

\bibitem{HST}
M.~Foreman and A.~Kanamori, \emph{Handbook of set theory}, Springer, New York,
  2010.

\bibitem{foreman_magidor_shelah}
M.~Foreman, M.~Magidor, and S.~Shelah, \emph{Martin's maximum, saturated
  ideals, and nonregular ultrafilters. {I}}, Ann. of Math. (2) \textbf{127}
  (1988), no.~1, 1--47. \MR{924672}

\bibitem{jech}
T.~Jech, \emph{Set theory}, Springer Monographs in Mathematics, Springer,
  Ber\-lin, 2003, The third millennium edition, revised and expanded.
  \MR{1940513}

\bibitem{Kan09}
Akihiro Kanamori, \emph{The higher infinite}, second ed., Springer Monographs
  in Mathematics, Springer-Verlag, Berlin, 2009, Large cardinals in set theory
  from their beginnings, Paperback reprint of the 2003 edition. \MR{2731169}

\bibitem{krueger}
J.~Krueger, \emph{A general {M}itchell style iteration}, MLQ Math. Log. Q.
  \textbf{54} (2008), no.~6, 641--651. \MR{2472470}

\bibitem{krueger.IA}
\bysame, \emph{Internal approachability and reflection}, J. Math. Log.
  \textbf{8} (2008), no.~1, 23--39. \MR{2674000}

\bibitem{krueger.internally_club}
\bysame, \emph{Internally club and approachable for larger structures}, Fund.
  Math. \textbf{201} (2008), no.~2, 115--129. \MR{2448415}

\bibitem{larson}
Paul~B. Larson, \emph{The stationary tower}, University Lecture Series,
  vol.~32, American Mathematical Society, Providence, RI, 2004, Notes on a
  course by W. Hugh Woodin. \MR{2069032}

\bibitem{Lav78}
Richard Laver, \emph{Making the supercompactness of {$\kappa $} indestructible
  under {$\kappa $}-directed closed forcing}, Israel J. Math. \textbf{29}
  (1978), no.~4, 385--388. \MR{0472529 (57 \#12226)}

\bibitem{MAG71}
M.~Magidor, \emph{On the role of supercompact and extendible cardinals in
  logic}, Israel J. Math. \textbf{10} (1971), 147--157. \MR{0295904 (45
  \#4966)}

\bibitem{magidor.characterization_supercompact}
\bysame, \emph{Combinatorial characterization of supercompact cardinals}, Proc.
  Amer. Math. Soc. \textbf{42} (1974), 279--285. \MR{327518}

\bibitem{covering_properties}
M.~Viale, \emph{A family of covering properties}, Math. Res. Lett. \textbf{15}
  (2008), no.~2, 221--238. \MR{2385636}

\bibitem{VIAWEI10}
M.~Viale and C.~Wei\ss, \emph{On the consistency strength of the proper forcing
  axiom}, Adv. in Math. (2011), in press, corrected proof, available online 6
  August 2011.

\bibitem{weiss}
C.~Wei{\ss}, \emph{The combinatorial essence of supercompactness}, to appear in
  APAL.

\bibitem{woodin}
W.~H. Woodin, \emph{The axiom of determinacy, forcing axioms, and the
  nonstationary ideal}, de Gruyter Series in Logic and its Applications,
  vol.~1, Walter de Gruyter \& Co., Berlin, 1999. \MR{1713438}

\bibitem{Zap96}
Jind{\v{r}}ich Zapletal, \emph{A new proof of {K}unen's inconsistency}, Proc.
  Amer. Math. Soc. \textbf{124} (1996), no.~7, 2203--2204. \MR{1317054
  (96i:03051)}

\end{thebibliography}

\end{document}